\numberwithin{equation}{section}
\definecolor{darkgreen}{rgb}{0,0.7,0.1}
\theoremstyle{plain}
\newtheorem{theorem}{Theorem}[section]
\newtheorem{proposition}[theorem]{Proposition}
\newtheorem{lemma}[theorem]{Lemma}
\newtheorem{corollary}[theorem]{Corollary}
  \theoremstyle{remark}
\newtheorem{remark}[theorem]{Remark}
  \theoremstyle{definition}
\newtheorem{definition}[theorem]{Definition}
\begin{document}
\subjclass[2020]{35J93, 35A24, 35B05, 35B09, 35B45.}

\keywords{Quasilinear elliptic equations, Shooting method, Bounded variation solutions, Multiple oscillating solutions, Neumann boundary conditions.}

\title[Multiple BV solutions for a prescribed mean curvature equation]{Multiple bounded variation solutions for a prescribed mean curvature equation with Neumann boundary conditions}

\author[A. Boscaggin]{Alberto Boscaggin}
\address{Alberto Boscaggin\newline\indent 
Dipartimento di Matematica
\newline\indent
Universit\`a di Torino
\newline\indent
via Carlo Alberto 10, 10123 Torino, Italia}
\email{alberto.boscaggin@unito.it}

\author[F. Colasuonno]{Francesca Colasuonno}
\address{Francesca Colasuonno\newline\indent 
Dipartimento di Matematica
\newline\indent
Alma Mater Studiorum Universit\`a di Bologna
\newline\indent
piazza di Porta San Donato 5, 40126 Bologna, Italia}
\email{francesca.colasuonno@unibo.it}

\author[C. De Coster]{Colette De Coster}
\address{Colette De Coster
\newline \indent {Univ. Polytechnique Hauts-de-France, EA 4015 - LAMAV - FR CNRS 2956, 
\newline\indent F-59313 Valenciennes, France}}
\email{colette.decoster@uphf.fr}

\date{\today}

\begin{abstract} 
We prove the existence of multiple positive BV-solutions of the Neumann problem 
$$
\begin{cases}
\displaystyle -\left(\frac{u'}{\sqrt{1+u'^2}}\right)'=a(x)f(u)\quad&\mbox{in }(0,1),\\
u'(0)=u'(1)=0,&
\end{cases}
$$
where $a(x) > 0$ and $f$ belongs to a class of nonlinear functions whose prototype example is given by $f(u) = -\lambda u + u^p$, for $\lambda > 0$ and $p > 1$. In particular, $f(0)=0$ and $f$ has a unique positive zero, denoted by $u_0$. Solutions are distinguished by the number of intersections (in a generalized sense) with the constant solution $u = u_0$. We further prove that the solutions found have continuous energy and we also give sufficient conditions on the nonlinearity to get classical solutions. The analysis is performed using an approximation of the mean curvature operator and the shooting method. 
\end{abstract}

\maketitle

\section{Introduction}

In the last decades, a great deal of research has been devoted to the study of nonlinear boundary value problems associated with the mean curvature equation
\begin{equation}\label{eq-intro}
\textnormal{div}\left( \frac{\nabla u }{\sqrt{1 + \vert \nabla u \vert^2}}\right) + g(x,u) = 0, \qquad x \in \Omega \subset \mathbb{R}^N,
\end{equation}
both in the ODE case ($N = 1$) and in the PDE one ($N \geq 2$); see, among many others, \cite{BHOO,CG,gerhardt,giusti,le,marzocchi,OO,pomponio,serrin} and the references therein. Besides this well known interpretation from Differential Geometry, this equation also appears in several contexts from Mathematical Physics, such as reaction-diffusion processes with saturation at high regimes \cite{BG,KR}, capillarity phenomena for incompressible fluids \cite{finn,huisken}, modeling of the human cornea \cite{CDOOS,CDO,CDOO,OP}. From the genuinely mathematical point of view, the investigation of equation \eqref{eq-intro} leads to a variety of challenging technical issue, since, due to the strongly nonlinear character of the differential operator, it becomes necessary to take into account weaker notions of solutions, possibly exhibiting jump discontinuities. 

Along this line of research, in this paper we deal with the following one-dimensional Neumann problem 
\begin{equation}\label{eq:Pmain}
\begin{cases}
\displaystyle -\left(\frac{u'}{\sqrt{1+u'^2}}\right)'=a(x)f(u)\quad&\mbox{in }(0,1),\\
u>0&\mbox{in }(0,1),\\
u'(0)=u'(1)=0,&
\end{cases}
\end{equation}
where $a\in C^1([0,1])$, $a>0$ in $[0,1]$, and $f\in C^1([0,\infty))$ is a nonlinear term whose prototype example is given by
\begin{equation}\label{eq:proto}
f(s) = -\lambda s + s^p, \qquad \lambda > 0, \;p > 1.
\end{equation}
In particular, problem \eqref{eq:Pmain} has a unique constant solution $u \equiv u_0$  and we are interested in studying existence, multiplicity and some qualitative properties of non-constant solutions of \eqref{eq:Pmain} that oscillate around $u_0$.

The choice for this nonlinear term is partially inspired by some recent results, dealing with the radial Neumann problem (in an annulus or in a ball) associated with the semilinear equation 
$$
-\Delta u = f(u),
$$
see \cite{BonheureGrossiNorisTerracini2015,bonheure2016multiple,BNW,ABF2,ABF1,ma2016bonheure}, 
and with the Minkowski-curvature equation 
$$
\textnormal{div}\left( \frac{\nabla u }{\sqrt{1 - \vert \nabla u \vert^2}}\right) +f(u) = 0,
$$
see \cite{BCN4,BCN3}. In the above papers, it is shown that, for a large class of nonlinear terms $f$ including \eqref{eq:proto}, non-constant positive radial solutions oscillating around $u_0$ can be provided: more precisely, radial solutions $u$ having exactly $k$ intersections with $u_0$ exist if $f'(u_0)$ is greater than the $k$-th non-zero eigenvalue of the radial Neumann problem for $-\Delta u = \lambda u$.
On growing of the value $f'(u_0)$, a high multiplicity of solutions thus appear, confirming a conjecture first given in \cite{BNW}; in all these papers, solutions are meant in the classical sense. Notice also that
the Minkowski-curvature operator behaves as $\Delta u$ when $\nabla u$ is small, this being the reason why in both cases the condition required on $f'(u_0)$ to guarantee the existence of non-constant solutions is related to the eigenvalues of $-\Delta u=\lambda u$, cf. \cite[Theorem 1.1]{BCN3}.

The aim of this paper is to provide a similar solvability pattern for the boundary value problem \eqref{eq:Pmain}.
As in the recent papers \cite{LGO,LGO1,LGO2,LGOR1,LGOR2}, and since we will take advantage of some regularity results proved therein, we choose here to work in a purely one-dimensional setting; however, to avoid trivialities, we assume that a non-constant weight $a(x)$ can appear in front of the nonlinear term $f$. Notice that, since the mean curvature operator linearizes as $u''$ for $u'$ small, here the eigenvalues of the weighted problem $-u'' = \lambda a(x) u$ are expected to play a role. 

As already anticipated, the main difficulties in considering problem \eqref{eq:Pmain} are due to the possible lack of regularity of the solutions. 
We work indeed with Bounded Variation (BV, for short) solutions to \eqref{eq:Pmain}, a variational notion of solutions basically going back to the works of   A. Lichnewsky and R. Temam \cite{Te, EkTe, Li74a, Li74b, Li78, LiTe} and  E. Giusti and M. Miranda in \cite{Mi77, GiuIM, GiuPJM, Mi} and now commonly used in this context; we recall the precise definition at the beginning of Section \ref{sec:2} for the reader's convenience.  
The crucial point, here, is to define, for a (possibly discontinuous) BV-solution, a suitable notion of intersection with the constant $u_0$, so as to provide their multiplicity. To the best of our knowledge, a similar issue has never been faced in the context of mean curvature equations, for which only few high multiplicity results are known  \cite{Obe,HO,OO09,OO10,OO11,COZ}.

We now state our result precisely. First, we introduce the following structural assumptions on the nonlinear term: for some positive constant $u_0>0$, it holds
\begin{itemize}
\item[$(f_\mathrm{eq})$] $f(0)=f(u_0)=0$;
\item[$(f_\mathrm{sgn})$] $f(s)<0$ if $s\in(0,u_0)$, $f(s)>0$ if $s\in(u_0,\infty)$.
\end{itemize}
Moreover, we will also suppose that $f$ satisfies one of the following two conditions: 
\begin{itemize}
\item[$(f_\mathrm{ap})$] there exists $\bar u>u_0$ such that 
$$
\int_{u_0}^{\bar u}f(s)ds = C_a \int_{u_0}^{0}f(s)ds,
$$
where $C_a:=\max\Big\{\frac{a(1)}{a(0)}\exp \big(\int_0^1 \frac{{a'}^-(x)}{a(x)}dx \big), 
\frac{a(0)}{a(1)}\exp \big(\int_0^1 \frac{{a'}^+(x)}{a(x)}dx\big)\Big\}$;
\item[$(f_\mathrm{ap})'$] $\|a\|_{L^1(0,1)} \max f^- < 1$.
\end{itemize}
Clearly enough, the above conditions are completely unrelated. In particular, $(f_\mathrm{ap})$ is a condition on the behavior of $f$ at infinity;
it is surely satisfied when $\int_{u_0}^{+\infty}f(s)ds = +\infty$ so that, in particular, the model nonlinearity \eqref{eq:proto} fulfills it for every $\lambda >0$ and $p > 1$. Incidentally, notice also that, in case $a$ is monotone, the constant $C_a$ reduces to  $\max_{[0,1]} a/\min_{[0,1]}a$.
On the other hand, $(f_\mathrm{ap})'$ concerns only the behavior of $f$ in $[0,u_0]$, cf. $(f_\mathrm{sgn})$. Assumption $(f_\mathrm{ap})'$ is inspired from some arguments in \cite{OO2}, see also \cite{Rthesis}. It is satisfied by every function $f$ satisfying $(f_\mathrm{sgn})$, at the cost of asking that $\|a\|_{L^(0,1)}$ is small enough. Moreover, in view of \cite[Lemma 3.1]{LGO}, $(f_\mathrm{ap})'$ seems to be quite natural when looking for solutions with $u(0)<u_0$.

Second, 
we define the energy $\mathcal{E}$ for a solution of \eqref{eq:Pmain} by formally letting
\begin{equation}\label{eq:energyPmain}
\mathcal E(x):=1-\frac{1}{\sqrt{1+(u'(x))^2}}+a(x)F(u(x)),
\end{equation}
where $F(u) := \int_{u_0}^u f(s)ds$. 
Of course, in principle this definition is meaningless for a BV-function; however, it will be clear from the statement of the result that $\mathcal{E}$ is well-defined for every value of $x$ in the interval $[0,1]$ up to a finite number of points where $u$ is discontinuous 
(if $u$ is continuous but not differentiable at some point $\bar{x}$, it must be $\vert u'(\bar{x}) \vert = +\infty$ and the definition of $\mathcal{E}$ has to be intended in the limit sense, i.e., $\mathcal E(\bar{x})=1+a(\bar{x})F(u(\bar{x}))$. 

Finally, we introduce, for every $k\in\mathbb N$, $\lambda_{k}$ as the $k$-th eigenvalue of $-u'' = \lambda a(x) u$ in $(0,1)$ with Neumann boundary conditions;  namely, $0=\lambda_1<\lambda_2<\lambda_3<\dots$ and $\lambda_k\to \infty$ as $k\to\infty$. Moreover, we denote by  $u'(x^-)$ and $u'(x^+)$  the left and the right Dini derivatives of $u$ at $x \in (0,1)$, respectively. 

We can then state our result as follows.

\begin{theorem}\label{thm:main}
Let $a\in C^1([0,1])$ be such that $a(x)>0$ for every $x \in [0,1]$. 
Let $f\in C^1([0,\infty))$ satisfy $(f_\mathrm{eq})$, $(f_\mathrm{sgn})$ and either $(f_\mathrm{ap})$ or $(f_\mathrm{ap})'$.
Moreover, let us suppose that, for some $k\in \mathbb N$, 
$$
f'(u_0) >\lambda_{k+1}.
$$
Then, there exist at least $2k$ distinct non-constant BV-solutions $u_1,\,\dots, u_{2k}$ of \eqref{eq:Pmain}. Furthermore, 
\begin{enumerate}[(I)]
\item
for every $j=1,\dots,k$, there exist exactly $j+2$ points $0=x_{j,0}< x_{j,1} < \dots < x_{j,j}<x_{j,j+1}=1$, such that 

\begin{enumerate}[(a)]
\item $u_j\in C^2([0,x_{j,1}))\cap C^2((x_{j,j},1])$,  $u_j\in C^2((x_{j,i},x_{j,i+1}))$ for $i=1,\dots,j-1$ and $u_j'(0)=0=u_j'(1)$;
\item $(-1)^{i+1}(u_j(x)-u_0)>0$ for every $x\in (x_{j,i},x_{j,i+1})$ for $i=0,\dots,j$;
\item for every $i=1,\dots,j$ one of the following two statements holds true: 
\begin{enumerate}[(i)]
\item $u_j(x_{j,i})=u_0$ and $u_j\in C^2((x_{j,i-1},x_{j,i+1}))$;
\item $u_j(x_{j,i}^-)\le u_0 \le u_j(x_{j,i}^+)$ and $u_j'(x_{j,i}^-)=+\infty=u_j'(x_{j,i}^+)$ if $i$ is odd, 
$u_j(x_{j,i}^+)\le u_0 \le u_j(x_{j,i}^-)$ and $u_j'(x_{j,i}^-)=-\infty=u_j'(x_{j,i}^+)$ if $i$ is even,
\end{enumerate}
\end{enumerate}
\item
for every $\ell=1,\dots,k$, there exist exactly $\ell+2$ points $0=x_{\ell,0}< x_{\ell,1} < \dots < x_{\ell,\ell}<x_{\ell,\ell+1}=1$, such that 
\begin{enumerate}[(a)]
\item $u_{2k+1-\ell}\in C^2([0,x_{\ell,1}))\cap C^2((x_{\ell,\ell},1])$, $u_{2k+1-\ell}\in C^2((x_{\ell,i},x_{\ell,i+1}))$ for $i=1,\dots,\ell-1$ and $u_\ell'(0)=0=u_\ell'(1)$;
\item $(-1)^{i+1}(u_{2k+1-\ell}(x)-u_0)<0$ for every $x\in (x_{\ell,i},x_{\ell,i+1})$ for $i=0,\dots,\ell$;
\item for every $i=1,\dots,\ell$ one of the following two statements holds true: 
\begin{enumerate}[(i)]
\item $u_{2k+1-\ell}(x_{\ell,i})=u_0$ and $u_{2k+1-\ell}\in C^2((x_{\ell,i-1},x_{\ell,i+1}))$;
\item $u_{2k+1-\ell}(x_{\ell,i}^-)\le u_0 \le u_{2k+1-\ell}(x_{\ell,i}^+)$ and $u_{2k+1-\ell}'(x_{\ell,i}^-)=+\infty=u_{2k+1-\ell}'(x_{\ell,i}^+)$ if $i$ is even, 
\\
$u_{2k+1-\ell}(x_{\ell,i}^+)\le u_0 \le u_{2k+1-\ell}(x_{\ell,i}^-)$ and $u_{2k+1-\ell}'(x_{\ell,i}^-)=-\infty=u_{2k+1-\ell}'(x_{\ell,i}^+)$ if $i$ is odd,
\end{enumerate}
\end{enumerate}
\item
for every $j=1,\dots,2k$
the energy $\mathcal E_j$ corresponding to the solution $u_j$ can be extended by continuity to $[0,1]$.
\end{enumerate}
\end{theorem}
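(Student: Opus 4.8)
The plan is to decouple the two sources of difficulty---the saturation of the operator $\phi(s)=s/\sqrt{1+s^2}$ at infinity and the nonlinear oscillations around $u_0$---by first solving a \emph{regularized}, non-degenerate problem with a phase-plane shooting argument, and then recovering the BV-solutions with their generalized intersections by letting the regularization parameter vanish.

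First I would fix, for $\varepsilon>0$, a $C^1$, strictly increasing, odd, \emph{surjective} homeomorphism $\phi_\varepsilon\colon\mathbb{R}\to\mathbb{R}$ with $\phi_\varepsilon\to\phi$ in $C^1_{\mathrm{loc}}$, $\phi_\varepsilon'(0)=1$ and $\phi_\varepsilon$ unbounded (for instance $\phi_\varepsilon(s)=\phi(s)+\varepsilon s$). The point is that, for the genuine operator, the Cauchy problem with $u(0)=\alpha$, $u'(0)=0$ may cease to admit a classical solution because $u'$ blows up to $\pm\infty$ in finite $x$ (a vertical tangent), which is precisely the discontinuity phenomenon we want to capture; replacing $\phi$ by $\phi_\varepsilon$ restores global solvability and uniqueness, so that the regularized Neumann problem $-(\phi_\varepsilon(u'))'=a(x)f(u)$, $u'(0)=u'(1)=0$, admits genuine $C^2$ solutions with no vertical tangents, on which the shooting method can be run cleanly.

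Second, I would run the shooting in the phase plane $(u,\phi_\varepsilon(u'))$. Because $(f_\mathrm{sgn})$ makes $f$ change sign at $u_0$ from negative to positive, the trajectories wind around the point $(u_0,0)$, and I would measure the oscillations of the solution $u_\varepsilon(\cdot;\alpha)$ of the Cauchy problem ($u(0)=\alpha$, $u'(0)=0$) through a continuous rotation angle $\theta_\varepsilon(x;\alpha)$; the number of intersections of $u_\varepsilon(\cdot;\alpha)-u_0$ on $(0,1)$ equals the number of multiples of $\pi$ swept by $\theta_\varepsilon(\cdot;\alpha)$, and $u_\varepsilon'(1;\alpha)=0$ exactly when $\theta_\varepsilon(1;\alpha)\in\pi\mathbb{Z}$. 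Since $\phi_\varepsilon'(0)=1$, the linearization at $u_0$ is $-v''=f'(u_0)a(x)v$; by Sturm theory and the normalization of the Neumann eigenvalues, the hypothesis $f'(u_0)>\lambda_{k+1}$ forces $\theta_\varepsilon(1;\alpha)$ to exceed $k\pi$ for $\alpha$ close to $u_0$, while the $\varepsilon$-uniform a priori bounds coming from $(f_\mathrm{ap})$ (which, through the threshold $\bar u$ and the constant $C_a$, caps the amplitude above $u_0$) or from $(f_\mathrm{ap})'$ (which keeps the solutions starting below $u_0$ positive and bounded) make $\theta_\varepsilon(1;\alpha)$ small for $\alpha$ far from $u_0$. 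Continuity of $\alpha\mapsto\theta_\varepsilon(1;\alpha)$ and the intermediate value theorem then produce, for each $j=1,\dots,k$, one classical solution with $\alpha<u_0$ and $j$ intersections (family (I)) and one with $\alpha>u_0$ and $j$ intersections (family (II)); these $2k$ solutions are pairwise distinct since they are distinguished either by the number of intersections or by the sign of $u_\varepsilon-u_0$ near $0$, and at this regular level each intersection is of type (c)(i), with the sign pattern (b) read off directly from the winding.

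Third, I would let $\varepsilon\to0$: the $\varepsilon$-uniform $L^\infty$ bound and the uniform bound on the total variation of $\phi_\varepsilon(u_\varepsilon')$ (controlled by $\|a\,f(u_\varepsilon)\|_{L^1}$) give a $BV$ limit $u_j$, which the regularity theory of \cite{LGO} identifies as a BV-solution of \eqref{eq:Pmain} that is $C^2$ away from finitely many points. Here lies the main obstacle: proving that the intersection count is \emph{exactly} preserved in the limit and acquires the dichotomy of (c). Oscillations must be prevented from collapsing, and at each limiting crossing one must show that either $u_j$ stays continuous and $C^2$ across $u_0$ (case (c)(i)) or the gradient concentrates and $u_j$ jumps with a one-sided vertical tangent whose sign $\pm\infty$ is dictated by the parity of $i$, i.e.\ by the orientation of the winding (case (c)(ii)); this is where the shooting bookkeeping and the gradient estimates must be combined most carefully. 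Finally, for part (III), I would use the regularized energy $\mathcal{E}_\varepsilon:=u_\varepsilon'\phi_\varepsilon(u_\varepsilon')-\int_0^{u_\varepsilon'}\phi_\varepsilon+a(x)F(u_\varepsilon)$, which satisfies the exact identity $\mathcal{E}_\varepsilon'(x)=a'(x)F(u_\varepsilon(x))$ and, since $s\phi_\varepsilon(s)-\int_0^s\phi_\varepsilon\to 1-(1+s^2)^{-1/2}$ as $\varepsilon\to0$, converges to $\mathcal{E}$ in \eqref{eq:energyPmain}. The right-hand side $a'F(u_\varepsilon)$ is bounded uniformly in $\varepsilon$, so $\{\mathcal{E}_\varepsilon\}$ is equi-Lipschitz on $[0,1]$; passing to the limit shows $\mathcal{E}_j$ extends continuously to $[0,1]$, and at a jump point the curvature part tends to $1$ from both sides, whence the continuity of $\mathcal{E}_j$ forces the energy balance $F(u_j(x^-))=F(u_j(x^+))$ across the jump.
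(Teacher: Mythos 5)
Your overall strategy coincides with the paper's: replace $\varphi(s)=s/\sqrt{1+s^2}$ by a surjective, strictly increasing $C^1$ regularization (the paper truncates and extends affinely, you add $\varepsilon s$ --- this difference is immaterial), solve the regularized Neumann problem by shooting with a rotation number around $(u_0,0)$ compared against the eigenvalue problem $-u''=\lambda a(x)u$, and then pass to the limit in $BV$. However, there is a genuine gap precisely at the point you yourself flag as ``the main obstacle'': you never give the argument showing that the $2k$ limit functions are non-constant, pairwise distinct, and have \emph{exactly} the prescribed number of generalized intersections with the dichotomy (c). Saying that ``the shooting bookkeeping and the gradient estimates must be combined most carefully'' is an acknowledgment of the difficulty, not a proof, and without this step nothing excludes that all the approximating families collapse onto the constant $u_0$ (or onto one another) in the limit, in which case the theorem fails. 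The paper closes this gap with two ingredients absent from your proposal. First, a quantitative non-collapse estimate: if a regularized solution stayed uniformly close to $u_0$, the very rotation estimate used in the shooting would force its winding to exceed $k\pi$, contradicting that it equals $j\pi\le k\pi$; combined with a Gronwall estimate on the energy $E_n$, this yields a uniform $\bar\varepsilon>0$ such that $|u_n(\bar x)-u_0|>\bar\varepsilon$ at every extremum point $\bar x$, which is what keeps consecutive intersection points uniformly separated and prevents oscillations from disappearing. Second, a compactness upgrade away from vertical tangents: $v_n=\varphi_n(u_n')$ is equi-Lipschitz (its derivative is $-a f(u_n)$, uniformly bounded), hence converges uniformly to a continuous $v$ with $|v|\le 1$; wherever $|v|\le 1-\epsilon$ the $u_n$ are bounded in $C^1$ (and, via the equation, in $C^2$), so they converge locally uniformly, indeed in $C^1$, to $u$. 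One further shows $|v|<1$ off the limiting intersection points (by convexity/concavity of $u_n$ between intersections plus Fatou's lemma against the uniform $L^1$ bound on $u_n'$). These facts are exactly what transfer the sign pattern (b) to the limit, force any jump of $u$ to occur only at an intersection point with the one-sided vertical tangents of (c)(ii), and allow the strong maximum principle for convex/concave functions to turn weak inequalities into the strict ones in (b).

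The same machinery is also needed where your sketch of part (III) is too quick: equi-Lipschitz continuity of the regularized energies gives, via Arzel\`a--Ascoli, a continuous limit $E$, but to conclude you must identify $E$ with $\mathcal E$ off the intersection points, i.e.\ pass to the limit inside $1-\bigl(1+(u')^2\bigr)^{-1/2}+a\,F(u)$ pointwise; this requires precisely the local $C^1$ convergence of $u_n$ to $u$ away from $\{x_1,\dots,x_\ell\}$, which your proposal does not establish. A minor further inaccuracy: you invoke $(f_\mathrm{ap})$ or $(f_\mathrm{ap})'$ to make the rotation small for $\alpha$ far from $u_0$ in the shooting; in fact the multiplicity result at the regularized level needs neither assumption (an energy-plus-Gronwall bound already shows that solutions shot from large $\alpha$ complete less than a half turn), and these hypotheses enter only later, to obtain the $\varepsilon$-uniform $W^{1,1}$ bounds required for the $BV$ limit.
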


We observe that in part (I) of this theorem, we describe the oscillating solutions having $u(0)<u_0$, while part (II) deals with solutions having $u(0)>u_0$. Moreover, in both cases (I)-(c)-(i) and (I)-(c)-(ii), for every $i=1,\dots,j$ the function $u_j(\cdot)-u_0$ changes sign exactly once in the interval $(x_{j,i-1},x_{j,i+1})$, at point $x_{j,i}$.  When the case (c)-(ii) occurs, we will informally refer to $x_{j,i}$ as a {\it generalized intersection point} of $u$ with $u_0$. 
In this way, we can summarize parts (I)-(c)-(i) and (I)-(c)-(ii) of Theorem \ref{thm:main} stating that $u_j$ has exactly $j$ generalized intersections with $u_0$, that occur at points $x_{j,1},\dots, x_{j,j}$. A similar remark can be done also for parts (II)-(c)-(i) and (II)-(c)-(ii) of the statement. We also emphasize Part (III) of Theorem~\ref{thm:main}, ensuring that the solutions found have continuous energy: this is not always the case for a general BV-solution and it will be obtained as a consequence of our method of proof. 

The necessity of taking into account possibly discontinuous solutions with generalized intersections with $u_0$ is well-recognized even in the autonomous case, $a(x) \equiv a$. In such a case the equation in \eqref{eq:Pmain} can be equivalently written as the planar Hamiltonian system
\begin{equation}\label{ham-sys}
u' = \frac{v}{\sqrt{1-v^2}}, \qquad v' = - a f(u),
\end{equation}
and solutions lie on level sets of the energy
$$
H(u,v) = 1 - \sqrt{1-v^2} + a F(u),
$$
where, again, $F(u) = \int_{u_0}^u f(s)ds$; incidentally, notice that this agrees with our previous definition of the energy $\mathcal{E}$, 
that is, $H(u(x),v(x)) = \mathcal{E}(x)$ if $(u,v)$ is a solution of \eqref{ham-sys}. An elementary phase-plane analysis shows that the point $(u_0,0)$ is a global minimum of $H$ and, hence, a local center for the system, being surrounded by classical closed orbits with periods tending to $2\pi/\sqrt{f'(u_0)}$, as the solution shrinks to $u_0$. 
These orbits give rise to natural candidate solutions to \eqref{eq:Pmain}: precisely, if $(u,v)$ is a closed orbit around $(u_0,0)$ satisfying 
$(u(0),v(0)) \in \mathbb{R}^+ \times \{0\}$, then $u$ satisfies the Neumann boundary conditions if and only if the half-period is of the type $1/\ell$ for some $\ell \in \mathbb{N}$; in such a case, the solution $u$ is classical and the number of intersections with $u_0$ is exactly $\ell$. 
Discontinuous BV-solutions, on the other hand, have to be found at energy levels giving rise to disconnected level sets: due to the specific form of $H$, it is easy to check that, for an orbit $(u,v)$ intersecting the positive $u$-semiaxis at a point $(\tilde{u},0)$, this happens if and only if $a F(\tilde{u}) \geq 1$. In this regard, it is worth recalling that the BV-solutions provided by Theorem \ref{thm:main}, even when discontinuous, still have a continuous energy. Therefore, in the autonomous case BV-solutions are obtained by switching to different connected components of the same energy level set. See also Figure \ref{fig}.  

\begin{figure}\label{fig}
\centering
\includegraphics[scale=.5]{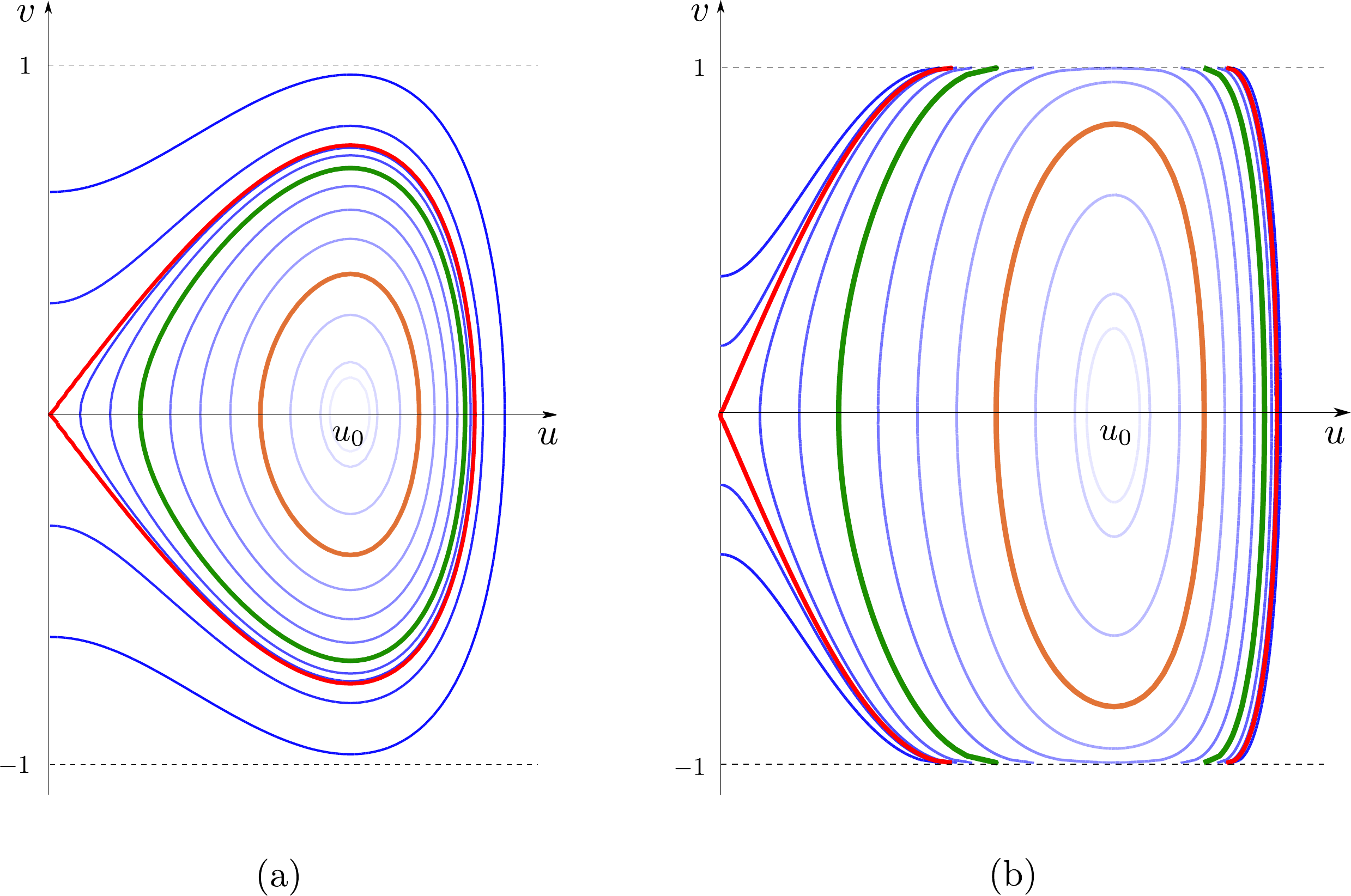}
\caption{Phase-portraits of the planar Hamiltonian system \eqref{ham-sys}, for different suitable choices of $a$ and $f$ (incidentally, let us recall that $\vert v \vert = 1$ if and only if $\vert u' \vert = +\infty$). 
On the left, all Neumann solutions to the equation are classical ones, corresponding to closed orbits around the equilibrium point $(u_0,0)$ (two of them are painted in green and orange color, respectively). On the right, on the contrary, due to the breakdown of some energy levels as soon as $\vert v \vert$ reaches the value $1$, both classical (the orange one, corresponding to a closed orbit) and discontinuous BV-solutions (the green one, corresponding to different connected components of the same energy level) appear. It is worth noticing that the existence of a classical homoclinic orbit to the saddle equilibrium point $(0,0)$ (painted in red in the figure on the left) is a sufficient condition for all the Neumann solutions to be classical: based on this observation, we will establish a similar criterion for the non-autonomous case in Section \ref{sec:6}.}
\end{figure} 

The proof of this theorem is quite long and we prefer to describe here the strategy and the main ideas behind it. In order to do this, it is useful to write the differential operator driving the equation in \eqref{eq:Pmain} as $(\varphi(u'))'$ with $\varphi(s):=s/\sqrt{1+s^2}$: it thus becomes apparent that the possible lack of regularity of the solutions is a consequence of the boundedness of $\varphi(\mathbb R)$.

To overcome this difficulty, we approximate $\varphi$ with the sequence $(\varphi_n)_n$ of $C^1$-functions that coincide with $\varphi$ in $[-n,n]$ and are affine in $\mathbb R\setminus[-n,n]$. We first study, for every $n\in\mathbb N$, the approximated problem \eqref{eq:Pn} governed by the operator $-(\varphi_n(u'))'$ (cf. \cite{BHOO,LGOR1} for a similar strategy) and we prove that, if $f'(u_0) >\lambda_{k+1}$, each approximated problem \eqref{eq:Pn} has $2k$ classical solutions. 
This multiplicity result is obtained via shooting method. In particular, we consider the Cauchy problem $u(0)=d$, $u'(0)=0$ associated to the equation $-(\varphi_n(u'))'=a(x)f(u)$ of the approximated problem \eqref{eq:Pn} and look for values of $d\in\mathbb R^+$ for which the solution $u_d$ of the Cauchy problem satisfies $u_d'(1)=0$, thus solving also \eqref{eq:Pn}.
To this aim, we count the number of half-turns performed by the solution in the phase plane around the equilibrium $(u_0,0)$, as done in \cite{ABF1,ABF2} for a $p$-Laplacian problem. We observe that neither $(f_\mathrm{ap})$ nor $(f_\mathrm{ap})'$ is required for the multiplicity result of the approximated problem.

We then prove that each sequence of solutions of the approximated problems (with a fixed number of intersections with $u_0$) converges in some sense to a BV-solution of the original problem \eqref{eq:Pmain}. To this aim, under either of the assumptions $(f_\mathrm{ap})$ and $(f_\mathrm{ap})'$, we prove that these sequences are bounded in $W^{1,1}(0,1)$, cf. Lemmas \ref{lem:bdd-Linfty} and \ref{lem:uprime-bdd-L1'}, and so it weakly-* converges to a BV-function, up to a subsequence. Once we have a limit function for every sequence, we prove in Proposition \ref{prop:uBvsol} that those functions actually are BV-solutions of \eqref{eq:Pmain}. 

The most delicate point is now to distinguish the $2k$ BV-solutions that we obtained with this approximation procedure. 
We manage to do that, by proving that each limit function inherits the oscillatory behavior of the approximating solutions. This is based on the Propositions \ref{prop:vn-conv-unif} and \ref{prop:un-conv-unif}, that ensure that the convergence is actually much stronger (viz. uniform) away from the intersection points. As a consequence, we get that the BV-solutions of \eqref{eq:Pmain} are allowed to jump only at generalized intersection points. 

We finally prove that the BV-solutions inherit also another important property from the approximating solutions, that is the continuity of the energy. This is mainly based on the preliminary Lemma~\ref{lem:u'ntou'} which ensures that, away from the generalized intersection points, the convergence is even $C^1$. 

The paper is organized as follows. In Section \ref{sec:2}, we give some preliminary known results and useful consequences about BV-solutions and the associated linear eigenvalue problem. In Section \ref{sec:3}, we study the approximated problems via shooting method and prove some properties of the approximating solutions. Section \ref{sec:4} is devoted to the proof of Parts (I) and (II) of the main result of the paper, Theorem \ref{thm:main}, while in Section \ref{sec:5} we establish Part (III) of the same result. Finally, in Section~\ref{sec:6}, we give some sufficient conditions on the nonlinearity $f$ to get classical solutions of \eqref{eq:Pmain}.

\section{Preliminaries}\label{sec:2}

In this section, we state some known results that will be useful in the subsequent sections. 
We start by clarifying the notion of solution used throughout the paper. 

\begin{definition}
We say that $u\in BV(0,1)$ is a \emph{bounded variation solution} (BV-solution) of problem \eqref{eq:Pmain} if for every $v\in BV(0,1)$
\begin{equation}\label{eq:defBV}
\int_0^1\sqrt{1+|Dv|^2}\ge \int_0^1\sqrt{1+|Du|^2}+\int_0^1 a(x)f(u)(v-u)dx,
\end{equation}
where 
$$
\int_0^1\sqrt{1+|Dv|^2}:=\sup\left\{\int_0^1(v w_1' + w_2)dx\,:\, w_1,\, w_2\in C^1_c(0,1), \|w_1^2+w_2^2\|_\infty \le 1 \right\}.
$$
Equivalently, $u\in BV(0,1)$ is a BV-solution of \eqref{eq:Pmain} if it is a global minimizer of the functional $I_u:BV(0,1)\to \mathbb R$ defined as
$$
I_u(v):=\int_0^1\sqrt{1+|Dv|^2}-\int_0^1 a(x)f(u)vdx\quad\mbox{for every }v\in BV(0,1).
$$
Incidentally, let us recall that, from \cite[Proposition 2.36]{BGH}, $BV(0,1)$ embeds into $L^{\infty}(0,1)$ so that the integrals are well-defined.
\end{definition}

\begin{remark}\label{rmk:weak-BV} Observe that by \cite[Lemma 2.1]{LGO}, $u\in W^{1,1}(0,1)$ is a weak solution of  \eqref{eq:Pmain} (in the usual sense), if $u$ satisfies   \eqref{eq:defBV} for all $v\in W^{1,1}(0,1)$. Moreover by \cite[Lemma 2.3]{LGO},  a weak solution $u\in W^{1,1}(0,1)$  of  \eqref{eq:Pmain} is also a BV-solution.
\end{remark}

\subsection{Some approximation lemmas}
\begin{lemma}\label{lem:W11toBV}
For every $u\in BV(0,1)$ there exists a sequence $(u_n)\subset W^{1,1}(0,1)$ such that 
\begin{itemize}
\item[(i)] $u_n\to u$ in $L^1(0,1)$;
\item[(ii)] $\int_0^1 |u_n'|dx\to\int_0^1 |Du|$;
\item[(iii)] $\int_0^1\sqrt{1+u_n'^2}dx\to\int_0^1\sqrt{1+|Du|^2}$.
\end{itemize} 
\end{lemma}

\begin{proof} See \cite[Fact 3.1 and 3.3]{A}.
\end{proof}

\begin{proposition}\label{prop:ineqW11ineqBV}
If $u\in BV(0,1)$ satisfies the inequality in \eqref{eq:defBV} for every $v\in C^\infty([0,1])$, then it is a $BV$-solution of \eqref{eq:Pmain}.
\end{proposition}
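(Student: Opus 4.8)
The plan is to show that the variational inequality \eqref{eq:defBV}, which is assumed to hold against all $v\in C^\infty([0,1])$, in fact propagates to every $v\in BV(0,1)$ by a two-step density argument; this is exactly the definition of $u$ being a BV-solution. Both steps hinge on the approximation provided by Lemma \ref{lem:W11toBV} and on the elementary fact that the integrand $s\mapsto\sqrt{1+s^2}$ is $1$-Lipschitz.

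First I would fix an arbitrary target $v\in BV(0,1)$ and, by Lemma \ref{lem:W11toBV}, pick a sequence $(v_n)\subset W^{1,1}(0,1)$ with $v_n\to v$ in $L^1(0,1)$ and $\int_0^1\sqrt{1+v_n'^2}\,dx\to\int_0^1\sqrt{1+|Dv|^2}$. The first reduction is to upgrade the hypothesis from $C^\infty([0,1])$ to $W^{1,1}(0,1)$. Given $v_n\in W^{1,1}(0,1)$, a standard mollification (after extending $v_n$ to a neighbourhood of $[0,1]$) yields $w_m\in C^\infty([0,1])$ with $w_m\to v_n$ and $w_m'\to v_n'$ in $L^1(0,1)$. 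Since $\big|\sqrt{1+w_m'^2}-\sqrt{1+v_n'^2}\big|\le|w_m'-v_n'|$, we obtain $\int_0^1\sqrt{1+w_m'^2}\,dx\to\int_0^1\sqrt{1+v_n'^2}\,dx$; moreover, because $a\in C^1([0,1])$, $u\in L^\infty(0,1)$ and $f\in C^1$, the coefficient $a(x)f(u)$ lies in $L^\infty(0,1)$, so the $L^1$-convergence $w_m\to v_n$ forces $\int_0^1 a(x)f(u)(w_m-u)\,dx\to\int_0^1 a(x)f(u)(v_n-u)\,dx$. Writing the assumed inequality \eqref{eq:defBV} for each smooth $w_m$ (for which the area functional coincides with $\int_0^1\sqrt{1+w_m'^2}\,dx$) and letting $m\to\infty$ gives \eqref{eq:defBV} for every $v_n\in W^{1,1}(0,1)$.

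The second step is then to let $n\to\infty$ in the inequality just established for $v_n$. On the left-hand side, $\int_0^1\sqrt{1+v_n'^2}\,dx\to\int_0^1\sqrt{1+|Dv|^2}$ by the choice of $(v_n)$ through Lemma \ref{lem:W11toBV}(iii), while the right-hand side converges to $\int_0^1\sqrt{1+|Du|^2}+\int_0^1 a(x)f(u)(v-u)\,dx$, again using $v_n\to v$ in $L^1(0,1)$ together with $a(x)f(u)\in L^\infty(0,1)$. Passing to the limit yields exactly \eqref{eq:defBV} for the arbitrary $v\in BV(0,1)$, so that $u$ is a BV-solution of \eqref{eq:Pmain}.

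I expect the only genuinely delicate point to be the bookkeeping of the area functional across the two approximations: one must make sure that the smoothing step does not spoil the convergence of $\int_0^1\sqrt{1+\,\cdot\,'^2}\,dx$, which is precisely what the $1$-Lipschitz estimate on $\sqrt{1+s^2}$ guarantees. The treatment of the linear term $\int_0^1 a(x)f(u)(v-u)\,dx$ is, by contrast, routine, since $a(x)f(u)$ is a fixed $L^\infty$ weight and all the approximations converge in $L^1(0,1)$.
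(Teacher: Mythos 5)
Your proof is correct, and it earns the comparison: it shares the paper's two-step skeleton (pass from $C^\infty([0,1])$ to $W^{1,1}(0,1)$, then from $W^{1,1}(0,1)$ to $BV(0,1)$), but the key lemma you use for the nontrivial half is different. The paper's proof is essentially a citation: it invokes \cite[Lemma 2.2]{LGO}, which states that $u$ is a BV-solution if and only if \eqref{eq:defBV} holds for all $v\in W^{1,1}(0,1)$, and then concludes by density of $C^\infty([0,1])$ in $W^{1,1}(0,1)$ together with the embedding of $BV(0,1)$ into $L^\infty(0,1)$ --- the latter two ingredients being exactly your mollification step. You instead reprove the reduction from $BV$ test functions to $W^{1,1}$ test functions directly: you apply the approximation of Lemma \ref{lem:W11toBV} to the target $v$, use its property (iii) to pass the area functionals to the limit, and use $a(x)f(u)\in L^\infty(0,1)$ (via the $BV\hookrightarrow L^\infty$ embedding) to pass the linear term to the limit. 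This is, incidentally, the very mechanism the paper itself deploys later in the proof of Proposition \ref{prop:strict-conv}, where the inequality \eqref{eq:limsup}, first established for $W^{1,1}$ test functions, is applied to an approximating sequence furnished by Lemma \ref{lem:W11toBV}. What the paper's route buys is brevity; what yours buys is self-containedness: it avoids the external equivalence of \cite{LGO} altogether and rests only on tools already stated in Section \ref{sec:2} (Lemma \ref{lem:W11toBV}, the $1$-Lipschitz bound on $s\mapsto\sqrt{1+s^2}$, and the $L^\infty$ embedding), at the modest cost of writing out the two limit passages explicitly.
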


\begin{proof} 
By  \cite[Lemma 2.2]{LGO}, we know that $u$ is a BV solution if and only if   \eqref{eq:defBV} is satisfied for all $v\in W^{1,1}(0,1)$. Then we conclude by density of $C^\infty([0,1])$ in $W^{1,1}(0,1)$ (see
\cite[Theorem 8.7]{B}) together with the continuous embedding of $BV(0,1)$ in $L^{\infty}(0,1)$ (see \cite[Proposition 2.36]{BGH}).
\end{proof}

\subsection{Regularity and qualitative properties of BV-solutions to \eqref{eq:Pmain}}

\begin{proposition}\label{prop:Bvsol-description}
Let $u$ be a BV-solution of \eqref{eq:Pmain}, then the following statements hold true. 
\begin{itemize}
\item[(i)] $\displaystyle{\int_0^1 a(x)f(u(x))dx=0}$.\smallskip
\item[(ii)] Let $(\alpha,\beta)\subset(0,1)$ be an interval such that $u(x)\in[u_0,\infty)$ for a.e. $x\in(\alpha,\beta)$ (resp., $u(x)\in[0,u_0]$ for a.e. $x\in(\alpha,\beta)$). Then, $u$ is concave (resp., convex) in $(\alpha,\beta)$, and its restriction to $(\alpha,\beta)$ is of class $C^2((\alpha,\beta))\cap W^{1,1}(\alpha,\beta)$ and satisfies $-\left(\frac{u'}{\sqrt{1+u'^2}}\right)'=a(x)f(u)$ for every $x \in (\alpha,\beta)$.

\noindent Furthermore, if $\alpha=0$, $u\in C^2([0,\beta))$ and $u'(0)=0$, and similarly, if $\beta=0$, $u\in C^2((\alpha,1])$ and $u'(1)=0$. \smallskip
\item[(iii)] Let $(\alpha,\beta)$ and $(\beta,\gamma)$ be any pair of adjacent subintervals of $(0,1)$ such that $u(x)\in[u_0,\infty)$ for a.e. $x\in(\alpha,\beta)$ and  $u(x)\in[0,u_0]$ for a.e. $x\in(\beta,\gamma)$ (resp., $u(x)\in[0,u_0]$ for a.e. $x\in(\alpha,\beta)$ and  $u(x)\in[u_0,\infty)$ for a.e. $x\in(\beta,\gamma)$). Then, either $u\in C^2((\alpha,\gamma))$, or 
$u(\beta^-)\ge u(\beta^+)$ and $u'(\beta^-)=-\infty=u'(\beta^+)$ (resp., $u(\beta^-)\le u(\beta^+)$ and $u'(\beta^-)=+\infty=u'(\beta^+)$).  
\end{itemize}
\end{proposition}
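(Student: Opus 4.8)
The three parts have increasing difficulty, and I would dispatch them in order, recalling throughout that $\varphi(s)=s/\sqrt{1+s^2}$ so that the equation reads $-(\varphi(u'))'=a(x)f(u)=:h(x)$, with $h\in L^\infty(0,1)$ since $BV(0,1)\hookrightarrow L^\infty(0,1)$.

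For Part (i) I would simply test the defining inequality \eqref{eq:defBV} with the constant perturbations $v=u\pm 1$. As $D(u\pm1)=Du$, the area terms cancel and one is left with $0\ge \pm\int_0^1 a(x)f(u)\,dx$; taking both signs forces the integral to vanish. (Equivalently, this is the vanishing of the first variation of $I_u$ along the line $t\mapsto u+t$.)

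For the concavity in Part (ii), on $(\alpha,\beta)$ with $u\ge u_0$ a.e. assumption $(f_\mathrm{sgn})$ gives $h\ge 0$ a.e., and I would argue by a chord comparison. For $[a,b]\subset(\alpha,\beta)$ (with $a,b$ continuity points) let $\ell$ be the chord joining $(a,u(a))$ and $(b,u(b))$ and set $w:=\max(u,\ell)$ on $[a,b]$, $w:=u$ outside. The relaxed area of $w$ does not exceed that of $u$, since a segment minimizes length among graphs sharing its endpoints and discarding any singular part of $Du$ inside a dip only lowers the area; meanwhile $w\ge u$ together with $h\ge 0$ makes $-\int h\,w\le -\int h\,u$. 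Minimality of $u$ then forces equality in both, which rules out any strict dip below a chord, so $u$ is concave (and symmetrically convex where $u\le u_0$). Concavity makes $u$ continuous and locally Lipschitz in the interior, so $Du$ carries no singular part there and $u\in W^{1,1}(\alpha,\beta)$ (its derivative is monotone and $\int|u'|$ is controlled by the oscillation of the bounded function $u$). Thus $u$ is a weak $W^{1,1}$-solution on $(\alpha,\beta)$ (Remark \ref{rmk:weak-BV}); since $u$ is continuous, $h=af(u)$ is continuous, and integrating $-(\varphi(u'))'=h$ shows $\varphi(u')\in C^1$, whence $u'=\varphi^{-1}(\varphi(u'))\in C^1$ and $u\in C^2$ with the equation holding pointwise. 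At an endpoint equal to $0$ or $1$ I would read off the natural condition $u'=0$ from \eqref{eq:defBV} tested with functions not vanishing at the boundary (via Proposition \ref{prop:ineqW11ineqBV}) and then bootstrap up to the boundary.

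For Part (iii) I would apply Part (ii) on each side of $\beta$, so that $u$ is concave on $(\alpha,\beta)$ and convex on $(\beta,\gamma)$; in particular the one-sided limits $u(\beta^\pm)$ and the monotone derivative limits $u'(\beta^\pm)\in[-\infty,+\infty]$ exist. The decisive object is the stress field $z:=\varphi(u')$: because $-z'=h\in L^\infty$, one has $z\in W^{1,\infty}(0,1)\hookrightarrow C^0$, so $z$ is continuous across $\beta$. If $|z(\beta)|<1$, then $u'(\beta^\pm)=\varphi^{-1}(z(\beta))$ is finite and matching, $u$ is continuous with (by the squeeze $u_0\le u(\beta^-)=u(\beta^+)\le u_0$) $u(\beta)=u_0$, and the $C^2$ conclusion follows exactly as in Part (ii). If instead $|z(\beta)|=1$, the BV matching (Anzellotti-pairing) condition between $z$ and $Du$ on the jump set pins down $z(\beta)=-1$ in the concave-to-convex configuration, i.e. $u'(\beta^\pm)=-\infty$ with a downward jump $u(\beta^-)\ge u(\beta^+)$, the reversed configuration being symmetric with $z(\beta)=+1$. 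The main obstacle is precisely this last step: producing the continuous stress field $z$ and justifying its matching with the singular part of $Du$ at the jump, which cannot be done by elementary phase-plane reasoning and is where one must invoke the area-functional regularity theory (the cited results of \cite{LGO} and related works).
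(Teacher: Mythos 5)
Your part (i) is exactly the paper's proof: testing \eqref{eq:defBV} with $v=u\pm 1$ and combining the two resulting inequalities. For parts (ii) and (iii), however, the paper does something much shorter than what you attempt: it simply observes that, thanks to $(f_\mathrm{sgn})$ and the continuity and positivity of $a$, both statements are immediate consequences of \cite[Proposition~3.6]{LGO}, a known regularity result for one-dimensional prescribed curvature equations with BV-solutions. Your treatment of (ii) is therefore a genuinely different, more self-contained route, and it is essentially sound: the chord comparison with $w=\max(u,\ell)$ (area does not increase by submodularity/length minimization, and $-\int h\,w\le -\int h\,u$ since $h\ge 0$ there) does force concavity, after which local Lipschitz regularity, the $W^{1,1}$ bound via monotonicity of $u'$, the first-variation Euler--Lagrange equation, and the $C^2$ bootstrap via $\varphi^{-1}$ all go through. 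What this buys is independence from the citation for the interior statement; what it costs is that several steps you gloss over need real care: the localization of the global minimality of $I_u$ to a subinterval (pasting a competitor inside $(a,b)$ must not create boundary jumps, which is exactly why your choice of continuity points $a,b$ matters and should be made explicit, together with the density of such points), the area-decrease inequality for $\max(u,\ell)$ for a BV function with a possible jump part, and the Neumann condition $u'(0)=0$, which you only sketch. For (iii), your outline (dichotomy $|z(\beta)|<1$ versus $|z(\beta)|=1$, the sign of $z(\beta)$ forced by one-sided concavity/convexity and by the jump-matching condition) is correct, but, as you yourself acknowledge, the existence of a continuous stress field $z$ satisfying $-z'=a f(u)$ \emph{across} the possible jump at $\beta$, and the Anzellotti-type pairing of $z$ with the singular part of $Du$, are precisely the content of \cite[Proposition~3.6]{LGO}; on this point your proof and the paper's coincide in deferring to that reference, the only difference being that the paper defers (ii) as well.
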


\begin{proof}
(i) If we take $v:=u-1$ as test function in \eqref{eq:defBV}, we get 
$$
\int_0^1 a(x)f(u(x))dx\ge 0,
$$
while, if we take $v:=u+1$ as test function in \eqref{eq:defBV}, we get 
$$
\int_0^1 a(x)f(u(x))dx\le 0.
$$
Therefore, the only possibility is that (i) holds. In view of assumption $(f_\mathrm{sgn})$, and taking into account the fact that $a$ is continuous and positive, parts (ii) and (iii) are immediate consequences of \cite[Proposition~3.6]{LGO}.
\end{proof}

\begin{remark}
By Proposition \ref{prop:Bvsol-description}-(i), in view of assumption $(f_\mathrm{sgn})$, we get that if $u$ is a BV-solution of \eqref{eq:Pmain}, then neither of the following two conditions can be verified
\begin{itemize}
\item $u(x)\in (0,u_0)$ for a.e. $x\in(0,1)$;
\item $u(x)\in(u_0,\infty)$ for a.e. $x\in(0,1)$.
\end{itemize}
\end{remark}

As a consequence of Proposition \ref{prop:Bvsol-description}-(ii), we can obtain the following result, which basically ensures that a BV-solution 
$u$ of \eqref{eq:Pmain}, if not identically equal to $u_0$, can assume the value $u_0$ only at points where $u(\cdot)-u_0$ changes sign. 

\begin{corollary}\label{cor:inter}
Let $u$ be a BV-solution of \eqref{eq:Pmain}. If there exist $j+2$ points $0=x_{0}< x_{1} < \dots < x_{j}<x_{j+1}=1$ such that 
$(-1)^{i+1}(u(x)-u_0) \geq 0$ for every $x\in (x_{i},x_{i+1})$ for $i=0,\dots,j$ (resp., $(-1)^{i}(u(x)-u_0) \geq 0$ for every $x\in (x_{i},x_{i+1})$ for $i=0,\dots,j$), then
either $u \equiv u_0$ on $(0,1)$ or $u(x) \neq u_0$ for every $x \in (0,1) \setminus \{ x_1,\dots,x_j \}$.
\end{corollary}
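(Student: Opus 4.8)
The plan is to argue by contradiction, leveraging the rigidity provided by Proposition~\ref{prop:Bvsol-description}-(ii), which states that on any subinterval where $u$ stays on one side of $u_0$ (in the closed sense), $u$ is strictly concave or convex and of class $C^2$. Let me focus on the case $(-1)^{i+1}(u(x)-u_0)\ge 0$; the other case is symmetric. Under this hypothesis, on each interval $(x_i,x_{i+1})$ with $i$ even we have $u(x)\ge u_0$ a.e., and on each interval with $i$ odd we have $u(x)\le u_0$ a.e.

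First I would suppose that $u\not\equiv u_0$, and suppose for contradiction that there is some point $\bar x\in (0,1)\setminus\{x_1,\dots,x_j\}$ with $u(\bar x)=u_0$. Then $\bar x$ lies in the interior of exactly one of the intervals, say $\bar x\in(x_i,x_{i+1})$. On this interval, Proposition~\ref{prop:Bvsol-description}-(ii) tells us that $u$ is $C^2$ and is either concave (if $i$ is even, so $u\ge u_0$ there) or convex (if $i$ is odd, so $u\le u_0$ there). Consider the even case: here $u(x)\ge u_0$ for every $x$ in the interval (by continuity, since $u$ is now known to be $C^2$ there), and $u(\bar x)=u_0$ realizes an interior minimum of $u$ over $(x_i,x_{i+1})$. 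A $C^2$ interior minimum forces $u'(\bar x)=0$ and, from the equation, $-\bigl(u'/\sqrt{1+u'^2}\bigr)'(\bar x)=a(\bar x)f(u_0)=0$ by $(f_\mathrm{eq})$. The key step is then to show $u$ must be constantly $u_0$ on the whole interval: since $u$ solves the autonomous-in-$u$ ODE with $u(\bar x)=u_0$, $u'(\bar x)=0$, uniqueness for this Cauchy problem (the equation can be rewritten as a first-order system in $(u,\varphi(u'))$ with $\varphi$ a $C^1$-diffeomorphism, and $f\in C^1$ guarantees local Lipschitz continuity) gives $u\equiv u_0$ on $(x_i,x_{i+1})$. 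The convex case is identical with an interior maximum.

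Having shown $u\equiv u_0$ on one subinterval, I would then propagate this across the whole of $(0,1)$ to contradict $u\not\equiv u_0$. The cleanest route is to observe that the endpoints $x_i,x_{i+1}$ of the subinterval where $u\equiv u_0$ are themselves points where $u$ equals $u_0$ (taking appropriate one-sided limits, which exist since $u\in BV$); by the boundary regularity in Proposition~\ref{prop:Bvsol-description}-(ii) one can re-enter the adjacent intervals with Cauchy data $u=u_0$, $u'=0$ and apply the same uniqueness argument, thereby spreading $u\equiv u_0$ to $(x_{i-1},x_{i+2})$ and, inductively, to all of $(0,1)$. Care is needed at the transition points $x_1,\dots,x_j$, where $u$ might a priori jump; but if $u\equiv u_0$ on an interval adjacent to such a point, the one-sided value is $u_0$, and Proposition~\ref{prop:Bvsol-description}-(iii) (ruling out a genuine jump unless $u'=\pm\infty$ on both sides, which is incompatible with $u'=0$ coming from the $u\equiv u_0$ side) closes the gap.

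The main obstacle I anticipate is the uniqueness argument at the level of the Cauchy problem: because solutions are only BV and the natural variable is $\varphi(u')=u'/\sqrt{1+u'^2}$ rather than $u'$ itself, one must be careful that the interior-extremum point has a genuine $C^2$ neighborhood (guaranteed by part (ii) only as long as we remain strictly inside a one-signed interval) so that standard ODE uniqueness applies. The subtlety is that $u(\bar x)=u_0$ sits exactly at the value separating the concave and convex regimes, so I must verify that the $C^2$ regularity on $(x_i,x_{i+1})$ does not degenerate precisely at $\bar x$; this is fine because part (ii) is stated for the closed condition $u\in[u_0,\infty)$ (resp.\ $[0,u_0]$), which includes the value $u_0$ itself, so the whole open interval enjoys $C^2$ regularity and the equation holds pointwise there. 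Once this is secured, the rest is routine propagation and the symmetric treatment of the second sign pattern.
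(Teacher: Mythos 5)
Your proof is correct, but it runs on a genuinely different engine than the paper's. Where you invoke Cauchy--Lipschitz uniqueness --- rewriting the equation as a first-order system in $(u,\varphi(u'))$, using that $\varphi^{-1}$ is smooth on $(-1,1)$ and $f\in C^1$, so that the constant solution is the unique one through the interior extremum, and then (implicitly) a connectedness argument to fill the whole subinterval --- the paper uses only an elementary ``strong maximum principle'' for convex/concave functions: a convex $C^1$ function with an interior maximum point is constant. Applied to the convex (resp.\ concave) restriction of $u$ furnished by Proposition \ref{prop:Bvsol-description}-(ii), this forces $u\equiv u_0$ on the subinterval containing the touching point with no ODE theory at all. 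The propagation step also differs: you glue across a transition point $x_{i+1}$ by invoking Proposition \ref{prop:Bvsol-description}-(iii) to exclude a jump (legitimate, since $u'(x_{i+1}^-)=0$ is incompatible with the infinite-derivative alternative) and then re-apply uniqueness on the next interval; the paper instead observes that once $u\equiv u_0$ on $(x_i,x_{i+1})$, the sign hypothesis becomes one-sided a.e.\ on the whole union $(x_{i-1},x_{i+2})$, so part (ii) applies directly across the transition points, giving concavity/convexity and $C^2$ regularity on the union, where the same maximum principle is applied again --- part (iii) is never needed. What your route buys is a self-contained dynamical argument in the spirit of the shooting method used elsewhere in the paper; what it costs is that it genuinely uses the local Lipschitz character of $f$ (uniqueness can fail for merely continuous $f$) and requires the small extra verifications you partly anticipate: that the pointwise equation extends across the glued transition point (it does, by continuity of both sides once $C^2$ regularity holds there), and that local uniqueness is upgraded to coincidence on the whole interval. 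The paper's convexity argument avoids all of this and would survive under weaker regularity of $f$.
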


For the proof, we adopt the following elementary version of the strong maximum principle: if $u: (a,b) \to \mathbb{R}$ is a convex (resp., concave) function of class $C^1$ and if $x_0 \in (a,b)$ is a maximum (resp., minimum) point of $u$, then $u$ is constant on $(a,b)$.

\begin{proof}
Assume that there exists $x^* \in (x_{i},x_{i+1})$ for some $i \in \{0,\dots,j\}$ such that $u(x^*) = u_0$;
moreover, to fix the ideas suppose that  $u(x) \leq u_0$ for every $x \in (x_i, x_{i+1})$ so that, by part (ii) of Proposition \ref{prop:Bvsol-description}, $u$ is convex on such an interval. Therefore, the strong maximum principle yields $u(x) = u_0$ for every $x \in (x_i, x_{i+1})$. Hence,  $u(x) \geq u_0$ for every $x \in (x_{i-1}, x_{i+2})$ (if $i = 0$ or $i = j$, we work on the intervals 
$(x_i,x_{i+2})$ or $(x_{i-1},x_{i+1})$). By part (ii) of Proposition \ref{prop:Bvsol-description}, $u$ is concave on $(x_{i-1}, x_{i+2})$ and the strong maximum principle can be applied again to obtain $u \equiv u_0$ on $(x_{i-1}, x_{i+2})$.
By repeating the argument, $u \equiv u_0$ on $(0,1)$.
\end{proof}

\subsection{The associated eigenvalue problem}
We consider the eigenvalue problem associated to \eqref{eq:Pmain}, namely
\begin{equation}\label{eq:eigenP}
\begin{cases}
-u''=\lambda a(x)u\quad&\mbox{in }(0,1),\\
u'(0)=u'(1)=0,
\end{cases}
\end{equation}
where $a=a(x)$ is the same positive weight appearing in \eqref{eq:Pmain}. For \eqref{eq:eigenP}, the following classical result holds, cf. for instance \cite{H}.
\begin{theorem}\label{thm:spectrum}
The eigenvalues of \eqref{eq:eigenP} form  a divergent, increasing sequence   $0=\lambda_1<\lambda_2<\dots\lambda_k<\dots$, $\lim_{k\to\infty}\lambda_k = \infty$. Moreover, every eigenvalue is simple. 
The eigenfunction that corresponds to the $k$-th eigenvalue $\lambda_k$
has exactly $k-1$ simple zeros in $(0,1)$.
\end{theorem}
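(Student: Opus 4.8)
The plan is to use the classical Pr\"ufer transformation, reducing the eigenvalue problem to a scalar first-order ODE for an angular variable whose behaviour depends monotonically on $\lambda$. Before that, I would dispose of the sign of the eigenvalues: testing $-u''=\lambda a u$ against $u$ and integrating by parts yields $\int_0^1 (u')^2\,dx=\lambda\int_0^1 a\,u^2\,dx$, and since $a>0$ in $[0,1]$ this forces $\lambda\ge 0$, with $\lambda=0$ possible only when $u'\equiv 0$, i.e. $u$ constant. Hence $\lambda_1=0$ is the smallest eigenvalue, its (constant) eigenfunction has no zeros, and it remains to treat $\lambda>0$.

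For $\lambda>0$, I would write the equation as the system $u'=v$, $v'=-\lambda a(x)u$ and pass to polar coordinates $u=r\sin\theta$, $v=u'=r\cos\theta$. A direct computation decouples the radial and angular parts:
\begin{equation}
\theta'=\cos^2\theta+\lambda a(x)\sin^2\theta, \qquad r'=r\,(1-\lambda a(x))\sin\theta\cos\theta.
\end{equation}
The radial equation is linear and homogeneous, so $r$ never vanishes once $r(0)\ne 0$; thus the zeros and critical points of $u$ can be read off from $\theta$ alone. The Neumann conditions $u'(0)=u'(1)=0$ become $\cos\theta(0)=\cos\theta(1)=0$, and I normalize $\theta(0)=\pi/2$. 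Since $\lambda>0$ and $a>0$, the right-hand side of the $\theta$-equation is strictly positive, so $\theta(\cdot;\lambda)$ is strictly increasing, and the condition at $x=1$ reads $\theta(1;\lambda)=\tfrac{\pi}{2}+(k-1)\pi$ for some integer $k\ge 1$.

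The core of the argument is the dependence of $\Theta(\lambda):=\theta(1;\lambda)$ on $\lambda$. Since $\cos^2\theta+\lambda a\sin^2\theta$ is increasing in $\lambda$ (strictly so away from $\sin\theta=0$), the comparison theorem for scalar ODEs gives that $\Theta$ is continuous and strictly increasing; strictness uses the fact that $\theta$ cannot remain at a multiple of $\pi$, where $\theta'=\cos^2\theta=1$. The two limiting regimes are: $\Theta(\lambda)\to\pi/2$ as $\lambda\to0^+$, because the limit equation $\theta'=\cos^2\theta$ with $\theta(0)=\pi/2$ has the constant solution $\theta\equiv\pi/2$; and $\Theta(\lambda)\to+\infty$ as $\lambda\to+\infty$. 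The latter I would obtain by comparison with the constant-coefficient equation $\theta'=\cos^2\theta+\lambda a_0\sin^2\theta$, where $a_0:=\min_{[0,1]}a>0$, for which the length needed to increase $\theta$ by $\pi$ equals $\int_0^\pi(\cos^2\theta+\lambda a_0\sin^2\theta)^{-1}\,d\theta=\pi/\sqrt{\lambda a_0}$, so that $\theta$ completes about $\sqrt{\lambda a_0}/\pi$ half-turns over $[0,1]$.

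Combining the monotonicity with these two limits, for each integer $m=k-1\ge0$ there is exactly one value of $\lambda$ solving $\Theta(\lambda)=\tfrac{\pi}{2}+m\pi$ ($\lambda=0$ when $m=0$), producing the increasing divergent sequence $0=\lambda_1<\lambda_2<\cdots$. Simplicity follows at once: the space of solutions of $-u''=\lambda a u$ satisfying $u'(0)=0$ is one-dimensional, so each eigenspace has dimension one. Finally, the zeros of the $k$-th eigenfunction occur exactly where $\theta=n\pi$; as $\theta$ increases strictly from $\pi/2$ to $\tfrac{\pi}{2}+(k-1)\pi$, these are the values $n=1,\dots,k-1$, giving exactly $k-1$ zeros, each simple since there $u'=r\cos(n\pi)=\pm r\ne 0$. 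I expect the main obstacle to be establishing the strict monotonicity and divergence of $\Theta(\lambda)$; the remaining assertions are then routine bookkeeping on the angular variable.
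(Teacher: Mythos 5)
Your proposal is correct. Note, however, that the paper does not actually prove this theorem: it is stated as a classical result with a citation (``cf.\ for instance [H]''), so there is no internal proof to compare against. What you have written is the standard self-contained Pr\"ufer-transformation argument, and it is in fact exactly the machinery the paper itself introduces immediately after the statement, up to a phase convention: the paper uses clockwise coordinates $u=\varrho\cos\vartheta$, $u'=-\varrho\sin\vartheta$ with $\vartheta'=\sin^2\vartheta+\lambda a(x)\cos^2\vartheta$ and $\vartheta(0)=0$, which is your $\theta$ shifted by $\pi/2$; likewise, the strict monotonicity of the angle in $\lambda$ that you sketch is precisely the property the paper quotes from [RW99, Theorem 4]. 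Your computations check out: the decoupled radial/angular system is right, $\int_0^\pi(\cos^2\theta+\lambda a_0\sin^2\theta)^{-1}d\theta=\pi/\sqrt{\lambda a_0}$ is correct, the simplicity argument via the one-dimensionality of $\{u: -u''=\lambda a u,\ u'(0)=0\}$ is the standard one, and the zero count follows from the strict increase of $\theta$ from $\pi/2$ to $\pi/2+(k-1)\pi$. Two small points you leave implicit but should record: (i) realness of eigenvalues (your integration by parts, applied with $\bar u$ as test function, already gives this); (ii) the divergence $\lambda_k\to\infty$ needs the observation that $\Theta(\lambda)\le \pi/2+1+\lambda\max_{[0,1]}a<\infty$ for each fixed $\lambda$, so that the strictly increasing sequence $\lambda_k$ cannot accumulate at a finite value while $\Theta(\lambda_k)=\pi/2+(k-1)\pi\to\infty$. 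Both are routine.
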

For future use, we introduce the clockwise polar coordinates
\begin{equation*}
\begin{cases}
u(x)=\varrho(x)\cos\vartheta(x)&\\
u'(x)=-\varrho(x)\sin\vartheta(x)&
\end{cases}
\end{equation*}
and we write the equation satisfied by the angular variable $\vartheta$ of a solution $u$ of \eqref{eq:eigenP} 
\begin{equation}\label{eq:theta'-eigen}
\vartheta_\lambda'(x)=\sin^2\vartheta(x)+\lambda a(x)\cos^2\vartheta(x)\quad\mbox{for }x\in(0,1).
\end{equation}
Notice that $\vartheta_\lambda'(x)>0$ for every $x\in (0,1)$, and so $\vartheta_\lambda=\vartheta_\lambda(x)$ is strictly increasing. Hence, the oscillatory behavior of the eigenfunctions of \eqref{eq:eigenP} described in Theorem~\ref{thm:spectrum} can be expressed in the following way: if $\lambda=\lambda_{k+1}$, the angular variable $\vartheta_{\lambda_{k+1}}$ that corresponds to the $(k+1)$-th eigenfunction satisfies the identity 
\begin{equation}\label{eq:rotation-eigen}
\vartheta_{\lambda_{k+1}}(1)-\vartheta_{\lambda_{k+1}}(0)=k\pi.
\end{equation}
We further observe that also the map $\lambda\mapsto\vartheta_\lambda$ is strictly increasing, in the sense that if $\lambda< \mu$, then the following implication holds
\begin{equation}\label{eq:theta-lambda-increas}
\vartheta_\lambda(0)\le \vartheta_\mu(0)\quad\Rightarrow\quad \vartheta_\lambda(x)< \vartheta_\mu(x) \mbox{ for every }x\in (0,1),
\end{equation}
cf. for instance \cite[Theorem 4]{RW99}. 
By convention, we choose an eigenfunction $u$ satisfying $u(0) > 0$, and so we
couple \eqref{eq:theta'-eigen} with the initial condition $\vartheta_\lambda(0)=0$ for every $\lambda$. 

\section{The approximated problem}\label{sec:3}
Let $\varphi(s):=\frac{s}{\sqrt{1+s^2}}$ for every $s\in\mathbb R$. For every $n\in\mathbb N$, we introduce $\varphi_n:\mathbb R\to \mathbb R$ as the $C^1$-function such that 
$$
\varphi_n(s)=
\begin{cases}
\varphi(s)\quad&\mbox{if }|s|\le n,\\
\mbox{affine} &\mbox{if }|s|> n.
\end{cases}
$$
\begin{proposition}\label{prop:phin-Phin}
Let $\Phi$ and $\Phi_n$ be the primitives of $\varphi$ and $\varphi_n$ that vanish in zero, namely 
$$
\Phi(s):=\displaystyle{\int_0^s\varphi(\xi)d\xi=\sqrt{1+s^2}-1} \quad\mbox{and}\quad \Phi_n(s):=\displaystyle{\int_0^s\varphi_n(\xi)d\xi}.
$$
For every $n\in\mathbb N$, $\varphi$, $\varphi_n$, $\Phi$, and $\Phi_n$ enjoy the following properties:
\begin{itemize}
\item[(a)] $\varphi_n(s)s\ge \varphi_{n+1}(s)s\ge \varphi(s)s\geq 0$ for every $s\in\mathbb R$;
\item[(b)] $\varphi_n$ is increasing and $\Phi_n$ is convex;
\item[(c)] $\Phi_n(s)\ge \Phi(s)$ for every $s\in\mathbb R$;
\item[(d)] $\Phi_n(s)\le \varphi_n(s)s$ for every $s\in\mathbb R$ and $\lim_{s\to\pm\infty}(\varphi_n(s)s-\Phi_n(s))=+\infty$;
\item[(e)] $s\varphi_n^{-1}(s)\ge s^2$ for every $s\in\mathbb R$.
\end{itemize}
\end{proposition}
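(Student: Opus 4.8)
The plan is to exploit two elementary structural facts about $\varphi$ and then leverage the explicit affine form of $\varphi_n$ outside $[-n,n]$. First, since $\varphi$ is odd, each $\varphi_n$ is odd as well (its affine pieces on $(n,\infty)$ and $(-\infty,-n)$ are reflections of one another), so the quantities $\varphi_n(s)s$, $\Phi_n(s)$ and $\varphi_n(s)s-\Phi_n(s)$ are all even; this lets me reduce every statement to the range $s\ge 0$. Second, a direct computation gives $\varphi'(s)=(1+s^2)^{-3/2}>0$ and $\varphi''(s)=-3s(1+s^2)^{-5/2}<0$ for $s>0$, so on $[0,\infty)$ the map $\varphi$ is strictly increasing, strictly concave, with $\varphi'$ strictly decreasing, and moreover $\varphi(s)\le s$. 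Writing $\varphi_n$ on $(n,\infty)$ as the tangent line $\varphi_n(s)=\varphi(n)+\varphi'(n)(s-n)$ will be the key, as it turns the geometric claims into comparisons of a concave curve with its tangent lines.

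For (a), on $[0,n]$ all three functions coincide, so I would only work on $(n,\infty)$. Concavity of $\varphi$ on $[0,\infty)$ gives that the tangent line at $n$, which is exactly $\varphi_n$, lies above the graph of $\varphi$, yielding $\varphi_n(s)\ge\varphi(s)\ge 0$. For the middle inequality I would compare the tangent lines of $\varphi$ at $n$ and at $n+1$: at $s=n+1$ concavity gives $\varphi_n(n+1)\ge\varphi(n+1)=\varphi_{n+1}(n+1)$, while the slopes satisfy $\varphi'(n)>\varphi'(n+1)$ since $\varphi'$ is decreasing; these two facts together force $\varphi_n(s)\ge\varphi_{n+1}(s)$ for every $s\ge n+1$. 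Part (b) is immediate: $\varphi_n$ equals the increasing function $\varphi$ on $[-n,n]$ and is affine with positive slope $\varphi'(n)>0$ outside, so $\varphi_n$ is increasing, whence $\Phi_n$, having $\varphi_n$ as derivative, is convex. Part (c) then follows by integrating the pointwise inequality from (a): for $s\ge 0$ one has $\varphi_n\ge\varphi$ on $[0,s]$, so $\Phi_n(s)-\Phi(s)=\int_0^s(\varphi_n-\varphi)\ge 0$, and the case $s<0$ is handled by the even symmetry of $\Phi_n$ and $\Phi$.

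For (d), the inequality $\Phi_n(s)\le\varphi_n(s)s$ is precisely the tangent-line inequality for the convex function $\Phi_n$ evaluated between $s$ and $0$ (using $\Phi_n(0)=0$), or equivalently the identity $\varphi_n(s)s-\Phi_n(s)=\int_0^s(\varphi_n(s)-\varphi_n(t))\,dt\ge 0$, the integrand being nonnegative for $t\in[0,s]$ by monotonicity of $\varphi_n$. For the limit I would use the same identity on the affine branch: for $s>n$ the integrand on $[n,s]$ equals $\varphi'(n)(s-t)$, so $\varphi_n(s)s-\Phi_n(s)\ge\int_n^s\varphi'(n)(s-t)\,dt=\tfrac12\varphi'(n)(s-n)^2\to+\infty$, and the $s\to-\infty$ case follows by evenness. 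Finally, for (e) I would substitute $t=\varphi_n^{-1}(s)$ (legitimate since (b) makes $\varphi_n$ a bijection of $\mathbb{R}$), reducing the claim $s\varphi_n^{-1}(s)\ge s^2$ to $\varphi_n(t)\,t\ge\varphi_n(t)^2$, that is $\varphi_n(t)\bigl(t-\varphi_n(t)\bigr)\ge 0$; this holds because $\varphi_n(t)$ shares the sign of $t$ (as $\varphi_n$ is increasing with $\varphi_n(0)=0$) and $|\varphi_n(t)|\le|t|$, the latter coming from $\varphi_n(s)\le s$ on $[0,\infty)$ together with oddness. None of the five items is genuinely hard; the only point requiring care is the two-tangent comparison in (a) and, throughout, keeping the sign conventions straight when transferring statements from $s\ge 0$ to $s<0$ via the odd/even symmetry.
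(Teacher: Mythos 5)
Your proof is correct and follows essentially the same route as the paper's: reduction to $s\ge 0$ by oddness, concavity of $\varphi$ (tangent-line comparisons) for (a), monotonicity of $\varphi_n$ for (b)--(d), the affine structure of $\varphi_n$ for the divergence in (d), and $\varphi_n(s)\le s$ combined with monotonicity of $\varphi_n^{-1}$ for (e). The only cosmetic differences are that in (d) you use the integral identity $\varphi_n(s)s-\Phi_n(s)=\int_0^s(\varphi_n(s)-\varphi_n(t))\,dt$ where the paper writes out the explicit quadratic form of $\varphi_n(s)s-\Phi_n(s)$ on the affine branch, and in (e) you substitute $t=\varphi_n^{-1}(s)$ where the paper applies $\varphi_n^{-1}$ directly to the inequality $\varphi_n(s)\le s$.
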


\begin{proof}
By definition, $\varphi$ and $\varphi_n$ are odd functions, and so we can restrict the proof of the properties to $s\ge 0$. To prove (a), we observe that being $\varphi$ concave, $\varphi_n\ge \varphi$ in $[0,\infty)$ for every $n$. Moreover since $\varphi'(s)=\frac{1}{(1+s^2)^{3/2}}$ is decreasing, by the definition $\varphi_n(s)\ge \varphi_{n+1}(s)$ for every $n$ and for every $s\in [0,\infty)$. Property (b) is obvious by the definition of $\varphi_n$ and by the fact that $\Phi_n$ is a primitive of $\varphi_n$. Property (c) follows immediately by (a), indeed for every $n$ and $s$ 
$$
\Phi_n(s)=\int_0^s\varphi_n(\xi)d\xi\ge\int_0^s\varphi(\xi)d\xi = \Phi(s).
$$
To prove (d) we use that $\varphi_n$ is monotone increasing, hence for every $s\ge 0$
$$
\Phi_n(s) \le \varphi_n(s)\int_0^s d\xi = \varphi_n(s)s.
$$
Now, by the definitions of $\varphi_n$ and $\Phi_n$, there exist three suitable constants $a_n,\,b_n,\,c_n$ with $a_n>0$ such that $\varphi_n(s)s=(a_ns+b_n)s$ and $\Phi_n(s) = a_n\frac{s^2}{2}+b_n s+c_n$ for every $|s|\ge n$. Thus for every $s\ge n$ 
$$
0\le \varphi_n(s)s-\Phi_n(s)=\frac{a_n}{2}s^2-c_n\to +\infty\qquad\mbox{as }s\to \infty.
$$
Finally, for property (e) we observe that since $\varphi'$ is decreasing and $\varphi'(0)=\varphi_n'(0)=1$,  $\varphi_n(s)\le s$ for every $s\ge 0$. Now, $\varphi_n$ is strictly increasing, and so also its inverse $\varphi_n^{-1}$ has the same monotonicity. Thus, for every $s\ge 0$
$$
s=\varphi_n^{-1}(\varphi_n(s))\le \varphi_n^{-1}(s).
$$ 
This concludes the proof.
\end{proof}

For every $n\in\mathbb N$, we consider the approximated problem
\begin{equation}\label{eq:Pn}
\begin{cases}
-(\varphi_n(u'))'=a(x)f(u)\quad&\mbox{in }(0,1),\\
u>0&\mbox{in }(0,1),\\
u'(0)=u'(1)=0,&
\end{cases}
\end{equation}
where we recall that $a\in C^1([0,1])$, $a>0$ in $[0,1]$, and on $f\in C^1([0,\infty))$ we require only the two assumptions $(f_\mathrm{eq})$, $(f_\mathrm{sgn})$ given in the Introduction.
We now prove the following multiplicity result for \eqref{eq:Pn}.

\begin{theorem}\label{thm:Pn}
Let $n,\,k\in\mathbb N$, and let $a\in C^1([0,1])$, $a>0$ in $[0,1]$, and assume that $f\in C^1([0,\infty))$ satisfies $(f_\mathrm{eq})$ and $(f_\mathrm{sgn})$. If $f'(u_0)>\lambda_{k+1}$, then problem \eqref{eq:Pn} admits at least $2k$ non-constant classical solutions $u_{n,1},\dots,u_{n,2k}$. Furthermore, 
\begin{enumerate}[(a)]
\item
for every $j=1,\dots,k$, $u_{n,j}(0)<u_0$ and $u_{n,j}-u_0$ has exactly $j$ zeros.
\item
for every $\ell=1,\dots,k$, $u_{n,2k+1-\ell}(0)>u_0$ and $u_{n,2k+1-\ell}-u_0$ has exactly $\ell$ zeros.
\end{enumerate}
\end{theorem}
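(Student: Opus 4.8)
The plan is to realize the solutions of \eqref{eq:Pn} as zeros of a shooting map and to control, by a rotation number in the phase plane, how many times a solution crosses the constant $u_0$. Writing $v:=\varphi_n(u')$, the equation becomes the planar system $u'=\varphi_n^{-1}(v)$, $v'=-a(x)f(u)$. Since $\varphi_n$ is an increasing homeomorphism of $\mathbb{R}$ whose inverse has linear growth and $f\in C^1$, for each $d$ the Cauchy problem with $(u(0),v(0))=(d,0)$ has a unique solution $(u_d,v_d)$ depending continuously on $d$. First I would check it is defined on all of $[0,1]$: where $u_d>u_0$ one has $(\varphi_n(u_d'))'=-af(u_d)<0$ so $u_d$ is concave, and where $u_d<u_0$ it is convex, which together with the linear growth of $\varphi_n^{-1}$ rules out blow-up. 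A solution of \eqref{eq:Pn} is then exactly a value $d$ for which $v_d(1)=0$ and $u_d>0$ on $(0,1)$. Note that only $(f_\mathrm{eq})$ and $(f_\mathrm{sgn})$ enter below.

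I would introduce clockwise polar coordinates centred at the equilibrium $(u_0,0)$, namely $u-u_0=\varrho\cos\theta$ and $v=-\varrho\sin\theta$, in analogy with Section~\ref{sec:2}, and compute
\[
\theta'(x)=\frac{1}{\varrho^2}\Big[(-v)\,\varphi_n^{-1}(-v)+a(x)\,(u-u_0)\,f(u)\Big]\ge\frac{v^2}{\varrho^2}\ge0 ,
\qquad \varrho^2=(u-u_0)^2+v^2 .
\]
The crucial point is that, as long as $u_d>0$, both summands are nonnegative: the first by Proposition~\ref{prop:phin-Phin}(e) and the oddness of $\varphi_n^{-1}$, the second because $(u-u_0)f(u)\ge0$ by $(f_\mathrm{sgn})$. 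Hence $\theta$ is nondecreasing, and it is strictly increasing across the axes $\{v=0,\ u\neq u_0\}$ and $\{u=u_0,\ v\neq0\}$, so the rotation is well defined and transversal. Since $v_d(0)=0$, I set $\theta(0)=0$ if $d>u_0$ and $\theta(0)=\pi$ if $d<u_0$; the Neumann condition $v_d(1)=0$ is then $\theta(1)\in\pi\mathbb{Z}$, and by monotonicity the number of zeros of $u_d-u_0$ in $(0,1)$ equals $R(d)/\pi$ where $R(d):=\theta(1)-\theta(0)$, whenever this is an integer. It therefore suffices to find, on each side of $u_0$, values of $d$ with $R(d)=j\pi$ for every $j=1,\dots,k$.

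Next I would analyse the two limiting regimes of $R(d)$, which is continuous in $d$. As $d\to u_0$ the whole orbit shrinks, $\varrho\to0$ uniformly on $[0,1]$, and using $\varphi_n'(0)=1$ (so $(\varphi_n^{-1})'(0)=1$) together with the differentiability of $f$ at $u_0$, the angular equation converges uniformly to the linear one \eqref{eq:theta'-eigen} with $\lambda=f'(u_0)$; by continuous dependence $R(d)\to\vartheta_{f'(u_0)}(1)$, which by \eqref{eq:theta-lambda-increas} and \eqref{eq:rotation-eigen} exceeds $\vartheta_{\lambda_{k+1}}(1)=k\pi$ since $f'(u_0)>\lambda_{k+1}$. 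At the opposite end the orbit approaches the saddle $(0,0)$ (recall $f(0)=0$): on the lower branch as $d\to0^+$, and on the upper branch as the turning value of $u_d$ in $(0,u_0)$ tends to $0$. Near a hyperbolic saddle a trajectory spends an arbitrarily long time, so over the fixed interval $[0,1]$ one gets $R(d)<\pi$. The intermediate value theorem then yields, on each branch, some $d_j$ with $R(d_j)=j\pi$ for $j=1,\dots,k$; the corresponding $u_{d_j}$ are non-constant (they cross $u_0$), pairwise distinct (different branches, or different crossing numbers), and positive because, being bounded away from the saddle regime, their lower turning values stay bounded away from $0$. This produces the $2k$ solutions and their descriptions in (a)–(b).

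The main obstacle is precisely the near-saddle estimate $R(d)<\pi$ at the extreme values of $d$, together with the identification, on the upper branch, of the threshold at which the orbit leaves the family encircling $(u_0,0)$. Because $a$ is non-autonomous there is no conserved energy to locate this threshold or the homoclinic level directly. I expect to handle it by freezing the coefficient and comparing with the autonomous systems associated with $\min_{[0,1]}a$ and $\max_{[0,1]}a$ — for which the $\Phi_n$-type first integral is available and the transit time near the saddle diverges — and then transferring the bound to the original problem through the monotonicity of $\theta$. Ensuring simultaneously the positivity $u_{d_j}>0$ on $[0,1]$ for the selected upper-branch solutions is the technically heaviest point of the argument.
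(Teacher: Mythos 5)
Your overall architecture --- the phase-plane formulation $u'=\varphi_n^{-1}(v)$, $v'=-a(x)\hat f(u)$, clockwise polar coordinates around $(u_0,0)$, the monotone angle, the rotation number $R(d)=\theta_d(1)-\theta_d(0)$, and the intermediate value theorem --- is exactly the paper's, and your treatment of the regime $d\to u_0$ (linearization of the angular equation and comparison with $\vartheta_\lambda$, using $f'(u_0)>\lambda_{k+1}$) is a legitimate variant of Lemma \ref{lem:rotation}, which instead uses the pointwise inequality $\hat f(s)(s-u_0)\ge \bar\lambda(s-u_0)^2$ near $u_0$ and a Sturm-type comparison. The genuine gap is at the other extreme of the shooting interval: producing, on each side of $u_0$, a datum with $R(d)<\pi$. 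On the lower branch this is easily repaired (the paper simply shoots from $d=0$, where the solution is the equilibrium and $R(0)=0$; alternatively, continuous dependence keeps small-$d$ orbits in the half-plane $u<u_0$). But on the upper branch your near-saddle argument, which you yourself flag as the main obstacle, is not a proof, and the proposed repair is unsound. First, without energy conservation there is no reason why, for large $d$, an orbit completing a half-turn within $[0,1]$ should pass near $(0,0)$ at all: the picture of a ``turning value tending to $0$'' and of a ``threshold at which the orbit leaves the family encircling $(u_0,0)$'' is an autonomous-case artifact. Second, the comparison with the frozen autonomous systems for $\min_{[0,1]}a$ and $\max_{[0,1]}a$ cannot be ``transferred through the monotonicity of $\theta$'': the angular equation \eqref{eq:theta'} is not an equation in $(x,\theta)$ alone (the term $a(x)\hat f(u)(u-u_0)/\rho^2$ depends on $\rho$), so pointwise inequalities between the vector fields of two \emph{different} planar systems do not yield inequalities between their rotation numbers --- the two orbits occupy different points at the same instant, and the differential inequality cannot be integrated. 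Nor can one hope for a self-contained upper bound $\theta'\le \sin^2\theta+\mu a(x)\cos^2\theta$ with $\mu$ small, since $\hat f(s)(s-u_0)/(s-u_0)^2\to f'(u_0)>\lambda_{k+1}$ as $s\to u_0$. (Note also that $(0,0)$ need not even be a hyperbolic saddle, since $f'(0)=0$ is allowed.)

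The paper closes exactly this gap with a short a priori estimate (Lemma \ref{lem:apriori-n}) that avoids the saddle entirely: if $R(d)\ge\pi$, then $u_d$ decreases on $(0,x_0)$ to a turning point with $0<u_d(x_0)<u_0$ and $u_d'(x_0)=0$; the energy $E_n$ of \eqref{eq:energy-un} satisfies $|E_n'|\le C\,E_n$, so backward Gronwall gives $E_n(x)\le e^C E_n(x_0)\le e^C a(x_0)F(0)$ on $[0,x_0]$, whence by Proposition \ref{prop:phin-Phin}-(d) one gets $\|u_d'\|_{L^\infty}\le K^*$ with $K^*$ independent of $d$; then $d-u_d(x_0)\le K^*$, which is impossible once $d>u_0+K^*$. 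This single estimate replaces your entire near-saddle discussion. Finally, the positivity you call ``the technically heaviest point'' is in fact immediate: extending $f$ by $0$ on $(-\infty,0)$ as in \eqref{eq:Pn>0}, uniqueness for the Cauchy problem (Lemmas \ref{lem:uniqueness_Cauchy} and \ref{lem:max-princ}) shows that any non-constant solution of the extended Neumann problem is strictly positive, so every solution selected by the shooting argument is automatically admissible for \eqref{eq:Pn}; no lower bound on turning values is needed.
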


The proof of the above theorem relies on a shooting technique.
As a first step, we introduce $\hat{f}$ as the continuous extension to zero of $f$ on $(-\infty,0)$ and we consider the problem
\begin{equation}\label{eq:Pn>0}
\begin{cases}
-(\varphi_n(u'))'=a(x)\hat f(u)\quad&\mbox{in }(0,1),\\
u'(0)=u'(1)=0.&
\end{cases}
\end{equation}
We also define the primitive of $\hat f$ vanishing at $s = u_0$, i.e.
$\hat F(s):=\displaystyle{\int_{u_0}^s}\hat f(\xi)d\xi.
$
Notice that, by $(f_\mathrm{sgn})$, 
\begin{equation}\label{eq:Fprop}
\hat F(s)\ge 0 \mbox{ for every }s\in \mathbb R \qquad\mbox{and}\qquad  \hat F(s)=0 \; \Leftrightarrow s=u_0,
\end{equation}
\begin{equation}\label{eq:Fprop2}
\hat F \mbox{ is strictly decreasing on } [0,u_0] \quad\mbox{and}\quad \hat F \mbox{ is strictly increasing on } [u_0,+\infty).
\end{equation}
As a useful tool to investigate the qualitative properties of the solutions, we introduce, for every $n\in\mathbb N$, the {\it energy} of a function $u$ satisfying the equation in \eqref{eq:Pn>0} as 
\begin{equation}\label{eq:energy-un}
E_n(x):=u'(x)\varphi_n(u'(x))-\Phi_n(u'(x))+a(x)\hat F(u(x))\quad\mbox{for every }x\in[0,1].
\end{equation}
Notice that, from \eqref{eq:Fprop} together with Proposition \ref{prop:phin-Phin}-(d), it holds that $E_n(x)\ge 0$ for every $x\in[0,1]$. 

We are now in a position to make our shooting procedure effective. For every $d\ge 0$, we consider the Cauchy problem 
\begin{equation}\label{eq:PdC}
\begin{cases}
-(\varphi_n(u'))'=a(x)\hat f(u)\quad&\mbox{in }(0,1),\\
u(0)=d,\quad u'(0)=0.&
\end{cases} 
\end{equation}

For \eqref{eq:PdC}, the following global existence, uniqueness, and continuous dependence result holds.

\begin{lemma}\label{lem:uniqueness_Cauchy}
For every $d\in [0,+\infty)$, there exists a unique global solution $u_d$ of \eqref{eq:PdC} in $[0,1]$; moreover, $u_d$ is of class $C^2([0,1])$. In addition, if $(d_j)\subset [0,\infty)$ is such that $d_j\to d\in [0,\infty)$ as $j\to\infty$, then 
\begin{equation}\label{eq:dip_cont}
u_{d_j}\to u_{d}\quad\mbox{and}\quad \varphi_n(u'_{d_j})\to \varphi_n(u'_{d}) \quad \mbox{as }j\to\infty \end{equation}
uniformly in $[0,1]$.
\end{lemma}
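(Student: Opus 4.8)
The plan is to recast \eqref{eq:PdC} as a planar first-order system and then invoke the classical Cauchy--Lipschitz theory, upgrading local to global existence by means of the energy \eqref{eq:energy-un}. First I would observe that, by Proposition~\ref{prop:phin-Phin}-(b), $\varphi_n$ is a strictly increasing $C^1$-function; since it is affine with positive slope outside $[-n,n]$, it is in fact a $C^1$-diffeomorphism of $\mathbb R$ onto itself, whose inverse $\varphi_n^{-1}$ is of class $C^1$ (as $\varphi_n'>0$ everywhere). Setting $v:=\varphi_n(u')$, problem \eqref{eq:PdC} becomes the system
\begin{equation*}
u'=\varphi_n^{-1}(v),\qquad v'=-a(x)\hat f(u),\qquad u(0)=d,\ v(0)=0.
\end{equation*}
Since $\hat f$ is locally Lipschitz on $\mathbb R$ (being the extension by zero of the $C^1$-function $f$, with $f(0)=0$), and $\varphi_n^{-1}\in C^1$, $a\in C^1$, the right-hand side is locally Lipschitz in $(u,v)$ and continuous in $x$. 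The Cauchy--Lipschitz theorem then yields a unique solution defined on a maximal interval $[0,\omega)\subseteq[0,1]$, together with continuous dependence on the datum $d$ on every compact subinterval of $[0,\omega)$.

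The heart of the proof is the a priori estimate guaranteeing that the solution extends to all of $[0,1]$. Differentiating \eqref{eq:energy-un} along the solution and using the equation, one finds, after the terms containing $u'$ cancel,
\begin{equation*}
E_n'(x)=a'(x)\hat F(u(x)).
\end{equation*}
Since $\Phi_n(s)\le\varphi_n(s)s$ by Proposition~\ref{prop:phin-Phin}-(d) and $\hat F\ge 0$ by \eqref{eq:Fprop}, we have $a(x)\hat F(u(x))\le E_n(x)$, whence $E_n'(x)\le \frac{|a'(x)|}{a(x)}E_n(x)$; Gronwall's lemma then gives the uniform bound $E_n(x)\le E_n(0)\exp\!\Big(\int_0^1 \tfrac{|a'|}{a}\,dx\Big)$, with $E_n(0)=a(0)\hat F(d)$. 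Consequently $\varphi_n(u')u'-\Phi_n(u')\le E_n(x)$ stays bounded, and the coercivity $\lim_{s\to\pm\infty}(\varphi_n(s)s-\Phi_n(s))=+\infty$ from Proposition~\ref{prop:phin-Phin}-(d) forces $u'$, hence $u$ and $v=\varphi_n(u')$, to remain bounded on $[0,\omega)$. By the standard continuation principle the solution cannot leave a fixed compact set in finite "time", so $\omega>1$ and $u_d$ is defined on the whole of $[0,1]$. I expect this a priori bound --- in particular the cancellation producing $E_n'=a'\hat F(u)$ together with the coercivity of $\varphi_n(s)s-\Phi_n(s)$ --- to be the main obstacle, as it is exactly where the tailored structure of the approximate operator is used.

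The $C^2$-regularity follows by a short bootstrap: $v'=-a(x)\hat f(u)$ is continuous, so $v\in C^1([0,1])$, and since $u'=\varphi_n^{-1}(v)$ with $\varphi_n^{-1}\in C^1$, we get $u'\in C^1([0,1])$, i.e.\ $u_d\in C^2([0,1])$. Finally, for \eqref{eq:dip_cont} I would note that if $d_j\to d$ then the $d_j$ are bounded, so $E_n(0)=a(0)\hat F(d_j)$ is bounded uniformly in $j$; the estimate above then confines all the solutions $(u_{d_j},\varphi_n(u_{d_j}'))$ to a single compact set $K\subset\mathbb R^2$ on which the right-hand side of the system is Lipschitz with a uniform constant. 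A Gronwall estimate for the difference of two solutions then yields $\|u_{d_j}-u_d\|_\infty+\|\varphi_n(u_{d_j}')-\varphi_n(u_d')\|_\infty\to0$, which is precisely \eqref{eq:dip_cont}.
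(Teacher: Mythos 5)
Your proof is correct and follows essentially the same route as the paper: recasting \eqref{eq:PdC} as the planar system \eqref{eq:sys-Pn}, applying Cauchy--Lipschitz, and using the cancellation $E_n'=a'\hat F(u)$ together with Gronwall and the coercivity of $s\varphi_n(s)-\Phi_n(s)$ from Proposition~\ref{prop:phin-Phin}-(d) to rule out blow-up. The only difference is cosmetic: you spell out the $C^2$ bootstrap and the Gronwall argument for continuous dependence, which the paper dismisses as standard.
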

\begin{proof}
We can rewrite the equation in \eqref{eq:PdC} as the following equivalent first-order planar system
\begin{equation}\label{eq:sys-Pn}
\begin{cases}
u'(x)=\varphi_n^{-1}(v(x))&\\
v'(x)=-a(x)\hat f(u(x))&
\end{cases}
\end{equation}
and we consider the Cauchy problem with initial conditions 
\begin{equation}\label{eq:ic-general}
u(x^0)=u^0,\quad v(x^0)=v^0.
\end{equation}
For any $(x^0,(u^0,v^0))\in [0,1]\times\mathbb R^2$, the existence and uniqueness of a local solution $(u,v)$ of \eqref{eq:sys-Pn}-\eqref{eq:ic-general} is guaranteed by the Cauchy-Lipschitz Theorem.

Concerning the global existence for \eqref{eq:PdC}, suppose by contradiction that there exists $x^*\in (x^0,1]$ such that $(u,v)$ is not defined for $x\ge x^*$, then 
\begin{equation}\label{eq:uu'toinfty}
\lim_{x\to (x^*)^-}(|u(x)|+|v(x)|)=\infty.
\end{equation}
Now, if we consider the energy along the solution as defined in \eqref{eq:energy-un}, we get, by Proposition \ref{prop:phin-Phin}-(d),
$$
|E_n'(x)|=|a'(x)|\hat F(u(x))\le C a(x)\hat F(u(x))\le C E_n(x),
$$
where $C:=\max_{[0,1]}|a'(x)|/\min_{[0,1]}a(x)$.
Therefore, by Gronwall's Lemma, 
\begin{equation}\label{eq:Gronwall-cons}
E_n(x)\le 
e^C E_n(x^0)\quad\mbox{for every }x\in [x^0,x^*).
\end{equation}
As $a(x)\hat F(u_d(x))\ge 0$ by \eqref{eq:Fprop}, from   
\eqref{eq:Gronwall-cons} we infer that
\begin{equation}\label{eq:u'd-bdd}
u'(x)\varphi_n(u'(x))-\Phi_n(u'(x))\le e^C E_n(x_0)\quad\mbox{for every }x\in [x^0,x^*).
\end{equation}
By Proposition \ref{prop:phin-Phin}-(d), this implies that $|u'|$ is bounded in $[x^0,x^*)$ and hence, also $u$ and $v$. This contradicts \eqref{eq:uu'toinfty}.

In the same way, we exclude the case where there exists $x^*\in [0,x^0)$ such that $(u,v)$ is not defined for $x\le x^*$. This implies that the solution must be defined on [0,1].

Finally, the regularity of $u$ is a consequence of the continuity of $a$ and $\hat f$, while the proof of the continuous  dependence is standard, once we have uniqueness and global existence.
\end{proof}

Furthermore, for \eqref{eq:Pn>0} we prove the following maximum principle-type result, which guarantees that all non-constant solutions of \eqref{eq:Pn>0} are positive. 

\begin{lemma}\label{lem:max-princ}
If $u$ is a classical solution of \eqref{eq:Pn>0}, then either $u\equiv -C$ for some $C\geq 0$, or $u>0$ in $[0,1]$.
\end{lemma}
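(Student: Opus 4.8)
The plan is to exploit the fact that $\hat f$ vanishes identically on $(-\infty,0]$ together with the uniqueness for the Cauchy problem already contained in Lemma~\ref{lem:uniqueness_Cauchy}. The guiding idea is that, because $\hat f(s)=0$ for every $s\le 0$, any nonpositive constant is a solution of the differential equation in \eqref{eq:Pn>0}; a classical solution reaching a nonpositive value at a critical point must therefore coincide with such a constant.

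First I would record the regularity: a classical solution $u$ of \eqref{eq:Pn>0} is automatically of class $C^2([0,1])$. Indeed, by Proposition~\ref{prop:phin-Phin}-(b) the map $\varphi_n$ is increasing, and in fact strictly so with positive derivative everywhere (it equals $\varphi'>0$ on $[-n,n]$ and has the positive slope $\varphi'(n)$ on the affine pieces), hence $\varphi_n$ is a $C^1$-diffeomorphism of $\mathbb R$ with $C^1$ inverse. The equation then gives $\varphi_n(u')\in C^1$, so $u'=\varphi_n^{-1}(\varphi_n(u'))\in C^1$. In particular the minimum $m:=\min_{[0,1]}u$ is attained at some $x^*\in[0,1]$, and at such a point one has $u'(x^*)=0$: this holds either because $x^*$ is an interior critical point of the $C^1$-function $u$, or because $x^*$ is an endpoint, where the Neumann condition forces $u'(x^*)=0$.

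Next I would split into two cases according to the sign of $m$. If $m>0$, then $u\ge m>0$ on $[0,1]$ and we are in the second alternative. If instead $m\le 0$, I claim that $u$ is the constant $m$. The key observation is that $\hat f(m)=0$, so that the constant function $w\equiv m$ solves the equation in \eqref{eq:Pn>0}. Passing to the equivalent first-order system \eqref{eq:sys-Pn}, both the pair associated with $u$ and the pair $(w,\varphi_n(w'))\equiv(m,0)$ solve \eqref{eq:sys-Pn}, and at $x^*$ they share the same initial datum $(u(x^*),\varphi_n(u'(x^*)))=(m,0)$. Since $\hat f$ is locally Lipschitz on $\mathbb R$ (it is $C^1$ on $[0,\infty)$ with $f(0)=0$, and identically $0$ on $(-\infty,0]$, so the only possible corner at $0$ is Lipschitz), the Cauchy--Lipschitz theorem applies to \eqref{eq:sys-Pn} with base point $x^*$, exactly as in the proof of Lemma~\ref{lem:uniqueness_Cauchy}. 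Uniqueness then propagates forward and backward from $x^*$ over all of $[0,1]$, yielding $u\equiv m$, i.e. $u\equiv -C$ with $C:=-m\ge 0$.

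I do not expect a genuine obstacle in this argument, which is essentially a rigidity statement driven by uniqueness. The two points deserving care are purely bookkeeping: (i) that $u'(x^*)=0$ holds uniformly in the interior and boundary cases, which follows from the $C^1$-regularity and the Neumann conditions; and (ii) that Lemma~\ref{lem:uniqueness_Cauchy} may be invoked at an arbitrary base point $x^*\in[0,1]$, including the endpoints, rather than only at $x=0$ as literally stated, which is immediate from its proof since the Cauchy--Lipschitz argument there is carried out for general initial data $(x^0,(u^0,v^0))\in[0,1]\times\mathbb R^2$.
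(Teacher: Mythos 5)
Your proposal is correct and follows essentially the same route as the paper: at a minimum point $x^*$ the Neumann conditions or interior criticality give $u'(x^*)=0$, and if the minimum value $m$ is nonpositive then $\hat f(m)=0$, so the constant $m$ and $u$ solve the same Cauchy problem, whence $u\equiv m$ by the uniqueness of Lemma~\ref{lem:uniqueness_Cauchy}. The paper's proof is just a terser version of this argument (it does not spell out the $C^1$-regularity, the endpoint bookkeeping, or the Lipschitz character of $\hat f$ at $0$, all of which you verify correctly).
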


\begin{proof}
This can be easily deduced from the uniqueness of the Cauchy problem as if $\min u = u(x_0)=-C$ with $C\geq 0$, then $u'(x_0)=0$. This implies that $u$  and  $-C$ are solutions of the Cauchy problem
$$
\begin{cases}
-(\varphi_n(w'))'=a(x)\hat f(w)\quad&\mbox{in }(0,1),
\\
w(x_0)=-C,\quad w'(x_0)=0.&
\end{cases} 
$$
By uniqueness, we conclude that  $u\equiv -C$.
\end{proof}

By Lemmas \ref{lem:uniqueness_Cauchy} and \ref{lem:max-princ}, if for some $d\in [0,\infty)$ the solution  $u_d$ of \eqref{eq:PdC} is non-constant and satisfies $u'_d(1)=0$, then $u_d$ solves \eqref{eq:Pn}. 
Therefore, our goal is to look for such initial data $d$. To this aim, set $v(x):=\varphi_n(u'(x))$, it is convenient to introduce  the following system of clockwise polar coordinates around the point $(u,v)=(u_0,0)$ 
\begin{equation}\label{eq:polar}
\begin{cases}
u(x)-u_0=\rho(x)\cos(\theta(x))&\\
v(x)=-\rho(x)\sin(\theta(x)).&
\end{cases}
\end{equation}
We remark that, in view of the uniqueness proved in Lemma \ref{lem:uniqueness_Cauchy}, either $\rho\equiv 0$ or $\rho(x)\not=0$ for all $x\in [0,1]$. Moreover, for $d\in [0,\infty)$, if $u_d$ solves \eqref{eq:PdC}, the corresponding angular variable $\theta_d$ satisfies the following differential equation in $(0,1)$,
\begin{equation}\label{eq:theta'}
\begin{aligned}
\theta_d'(x)&=\frac{1}{\rho_d^2(x)} \left[\varphi_n(u'_d(x))u'_d(x) + a(x)\hat f(u_d(x))(u_d(x)-u_0) \right]
\\
&=\frac{1}{\rho_d^2(x)} \left[v_d(x)\varphi_n^{-1}(v_d(x)) + a(x)\hat f(u_d(x))(u_d(x)-u_0) \right]
\end{aligned}
\end{equation}
with initial conditions
\begin{equation}\label{eq:initial-cond-polar}
\begin{cases}
\theta_d(0)=\pi \quad\text{and}\quad \rho_d(0)=|d-u_0|,&\text{if } d\in [0, u_0),
\\
\theta_d(0)=0 \quad\text{and}\quad \rho_d(0)=|d-u_0|,&\text{if } d\in (u_0, +\infty).
\end{cases}
\end{equation}
We further observe that by \eqref{eq:theta'} and $(f_{\mathrm{sgn}})$, $\theta_d'(x)>0$ for every $x\in [0,1]$. 

In the following lemma we prove that the hypothesis $f'(u_0)>\lambda_{k+1}$ allows us to estimate from below the number of half turns that a solution $(u_d,v_d)$ of 
\begin{equation}\label{eq:ic}
\begin{cases}
u'(x)=\varphi_n^{-1}(v(x))&\\
v'(x)=-a(x)\hat f(u(x))&\\
u(0)=d,\quad v(0)=0&
\end{cases}
\end{equation}
 performs around $(u_0,0)$ if it is shot from a point $(d,0)$ sufficiently close to $(u_0,0)$.

\begin{lemma}\label{lem:rotation}
If $f'(u_0)>\lambda_{k+1}$ for some $k\in\mathbb N$, then there exists $\bar \delta>0$ such that 
$$
\theta_d(1)-\theta_d(0)>k\pi\quad\mbox{for every }d\in [u_0-\bar\delta,u_0+\bar\delta]\setminus\{u_0\}.
$$ 
\end{lemma}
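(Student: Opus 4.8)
The plan is to compare the angular variable $\theta_d$ of the shot solution with the angular variable $\vartheta_\lambda$ of the linearized eigenvalue problem, using the fact that near the equilibrium $(u_0,0)$ the nonlinear rotation speed is governed by $f'(u_0)$, and to exploit the strict monotonicity of $\lambda\mapsto\vartheta_\lambda$ together with the identity $\vartheta_{\lambda_{k+1}}(1)-\vartheta_{\lambda_{k+1}}(0)=k\pi$ from \eqref{eq:rotation-eigen}. Since $f'(u_0)>\lambda_{k+1}$, I expect that for initial data $d$ close enough to $u_0$, the genuinely nonlinear system rotates strictly faster than the $\lambda_{k+1}$-eigenfunction over $[0,1]$, giving more than $k\pi$ of total rotation.

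First I would linearize. Writing $w:=u-u_0$, the equation $-(\varphi_n(u'))'=a(x)\hat f(u)$ has, near $(u_0,0)$, the approximate form $-w''=a(x)f'(u_0)w$ (using $\varphi_n'(0)=1$ and $\hat f(u)=f'(u_0)(u-u_0)+o(|u-u_0|)$). So for small amplitude the flow of \eqref{eq:ic} is a small perturbation of the flow of \eqref{eq:eigenP} with $\lambda=f'(u_0)$. Concretely, I would rewrite the rotation equation \eqref{eq:theta'} and compare it, term by term, with \eqref{eq:theta'-eigen}: the first bracket $v_d\varphi_n^{-1}(v_d)$ behaves like $v_d^2$ (here Proposition~\ref{prop:phin-Phin}-(e) gives $v\varphi_n^{-1}(v)\ge v^2$, which is exactly the useful sign) and corresponds to $\sin^2\vartheta$, while $a(x)\hat f(u_d)(u_d-u_0)$ behaves like $a(x)f'(u_0)(u_d-u_0)^2$ and corresponds to $\lambda a(x)\cos^2\vartheta$. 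Dividing by $\rho_d^2$ converts these into the trigonometric form.

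Next I would make the comparison rigorous and uniform. Fix $\mu$ with $\lambda_{k+1}<\mu<f'(u_0)$. By continuity of $\hat f$ and of $\varphi_n^{-1}$ at $0$, together with Proposition~\ref{prop:phin-Phin}-(e), there exists $\bar\delta>0$ such that whenever $|u_d(x)-u_0|$ and $|v_d(x)|$ are small enough (guaranteed for $d\in[u_0-\bar\delta,u_0+\bar\delta]$ via the continuous dependence in Lemma~\ref{lem:uniqueness_Cauchy} and the fact that the orbit stays on a bounded energy level $E_n$), the pointwise rotation speed satisfies
$$
\theta_d'(x)\ge \sin^2\theta_d(x)+\mu\, a(x)\cos^2\theta_d(x)\quad\text{for all }x\in[0,1].
$$
Since $\mu>\lambda_{k+1}$, the strict monotonicity \eqref{eq:theta-lambda-increas} of the eigenvalue flow yields, after matching the initial angles, that $\theta_d(1)-\theta_d(0)>\vartheta_{\lambda_{k+1}}(1)-\vartheta_{\lambda_{k+1}}(0)=k\pi$ by \eqref{eq:rotation-eigen}. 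A small care is needed because the initial angle in \eqref{eq:initial-cond-polar} is $\pi$ (for $d<u_0$) or $0$ (for $d>u_0$) rather than $0$, but this only shifts both flows by the same constant and does not affect the total rotation $\theta_d(1)-\theta_d(0)$; one uses a standard comparison theorem for the scalar ODE $\theta'=g(x,\theta)$ together with the monotonicity in $\lambda$.

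The main obstacle I anticipate is making the perturbative lower bound genuinely uniform in $x\in[0,1]$ and uniform in $d$ over the whole shrinking interval, rather than merely pointwise in the amplitude. The subtlety is that the remainder terms in $v_d\varphi_n^{-1}(v_d)-v_d^2$ and in $a(x)[\hat f(u_d)(u_d-u_0)-f'(u_0)(u_d-u_0)^2]$ must be controlled relative to $\rho_d^2$ uniformly, which requires knowing that the ratios $v_d^2/\rho_d^2$ and $(u_d-u_0)^2/\rho_d^2$ stay bounded (they do, being $\sin^2\theta_d$ and $\cos^2\theta_d$) and that the full orbit remains in a small neighborhood of $(u_0,0)$ for \emph{all} $x\in[0,1]$ as $d\to u_0$. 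The latter follows because the energy $E_n$ is nearly conserved (Lemma~\ref{lem:uniqueness_Cauchy} and \eqref{eq:Gronwall-cons}) and $E_n(0)\to 0$ as $d\to u_0$, so $\rho_d$ stays uniformly small; I would spell this out to justify shrinking $\bar\delta$ once, independently of $x$.
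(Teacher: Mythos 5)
Your proposal is correct and follows essentially the same route as the paper: fix an intermediate $\mu\in(\lambda_{k+1},f'(u_0))$, use the $C^1$ property of $f$ at $u_0$ to get $\hat f(s)(s-u_0)\ge\mu(s-u_0)^2$ near $u_0$, use continuous dependence to keep the orbit in that neighborhood, apply Proposition~\ref{prop:phin-Phin}-(e) to bound $v_d\varphi_n^{-1}(v_d)\ge v_d^2$, and conclude by ODE comparison with \eqref{eq:theta'-eigen} together with \eqref{eq:theta-lambda-increas} and \eqref{eq:rotation-eigen}. Your extra care about the initial angle ($\pi$ versus $0$) and about uniformity of the neighborhood estimate are points the paper treats implicitly, and your handling of them is sound.
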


\begin{proof} 
By the assumption on $f'(u_0)$, for every $\bar\lambda\in (\lambda_{k+1},f'(u_0))$, there exists $\delta>0$ such that
\begin{equation}\label{eq:fs-1>lambda}
\hat f(s)(s-u_0)\ge \bar\lambda (s-u_0)^2\quad\mbox{for every }s\mbox{ such that }|s-u_0|<\delta.
\end{equation}
On the other hand, by the continuous dependence with respect to $d$ of \eqref{eq:PdC}, in correspondence of $\delta$ there exists $\bar\delta>0$ such that if $d\in [u_0-\bar\delta,u_0+\bar\delta]\setminus\{u_0\}$, $|u_d(x)-u_0|<\delta$ for every $x\in [0,1]$. Therefore, by \eqref{eq:theta'}, \eqref{eq:fs-1>lambda}, and Proposition~\ref{prop:phin-Phin}-(e), we get for every $d\in [u_0-\bar\delta,u_0+\bar\delta]\setminus\{u_0\}$ and for every $x\in [0,1]$
$$
\theta_d'(x)\ge \frac{1}{\rho_d^2(x)}\left[v_d^2(x)+\bar\lambda a(x)(u_d(x)-u_0)^2\right]=\sin^2(\theta_d(x))+\bar\lambda a(x)\cos^2(\theta_d(x)).
$$
Hence, by \eqref{eq:theta'-eigen} with $\lambda=\bar\lambda$ and using the Comparison Theorem for ODEs, we get 
$$
\theta_d(1) - \theta_d(0)\geq \vartheta_{\bar\lambda}(1)- \vartheta_{\bar\lambda}(0).
$$
Being $\lambda_{k+1}<\bar\lambda$, by \eqref{eq:theta-lambda-increas} and \eqref{eq:rotation-eigen} we deduce
$$
\theta_d(1) - \theta_d(0)> \vartheta_{\lambda_{k+1}}(1)- \vartheta_{\lambda_{k+1}}(0)=k\pi,
$$
that concludes the proof.
\end{proof}

The next lemma ensures that solutions $(u_d,v_d)$ with $d$ large enough are too ``slow'' to make a half turn.

\begin{lemma}\label{lem:apriori-n}
There exists $d^* > u_0$ such that the solution of \eqref{eq:ic} with $d=d^*$ satisfies $\theta_d(1)-\theta_d(0)<\pi$.
\end{lemma}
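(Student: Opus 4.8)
The plan is to convert the rotational statement into a sign condition on the second coordinate $v_d:=\varphi_n(u_d')$ and then produce the launch datum $d^*$ by an energy comparison. Since $\theta_d(0)=0$, since $\theta_d$ is continuous and strictly increasing (recall $\theta_d'>0$), and since by \eqref{eq:polar} one has $v_d(x)=0$ exactly when $\theta_d(x)\in\pi\mathbb{Z}$, it suffices to exhibit $d^*>u_0$ with $v_{d^*}(x)<0$ for every $x\in(0,1]$: indeed, this keeps $\theta_{d^*}$ from ever reaching $\pi$ on $[0,1]$, so $\theta_{d^*}(1)-\theta_{d^*}(0)<\pi$. Observe that for any $d>u_0$ we have $v_d(0)=0$ and $v_d'(0)=-a(0)\hat f(d)=-a(0)f(d)<0$, so $v_d<0$ right after $0$; the only thing to rule out is that $v_d$ returns to $0$ inside $(0,1]$.

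First I would handle the main situation, in which $\hat F$ is unbounded (this covers the prototype \eqref{eq:proto}). Suppose that, for some $d>u_0$, the function $v_d$ vanishes again, and let $x_0\in(0,1]$ be its first positive zero, so that $v_d<0$ on $(0,x_0)$ and $v_d(x_0)=0$. Two facts pin down the state at $x_0$. Since $v_d$ rises to $0$ at $x_0$ we get $v_d'(x_0)=-a(x_0)\hat f(u_d(x_0))\ge0$, hence $\hat f(u_d(x_0))\le0$ and, by $(f_\mathrm{sgn})$, $u_d(x_0)\le u_0$. Moreover $u_d>0$ on $(0,x_0)$: were $u_d$ to reach $0$ with $v_d<0$, the nonlinearity would switch off ($\hat f\equiv0$ on $(-\infty,0)$), freezing $v_d$ at a negative value and precluding any return to $0$. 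Thus $u_d(x_0)\in[0,u_0]$ and, by \eqref{eq:Fprop2}, $\hat F(u_d(x_0))\le \hat F(0)=:F_0$.

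The crux is then a comparison of energies. Evaluating \eqref{eq:energy-un} at $x_0$, where $u_d'(x_0)=0$ because $v_d(x_0)=0$, gives $E_n(x_0)=a(x_0)\hat F(u_d(x_0))\le \|a\|_\infty F_0$. On the other hand, the Gronwall estimate established in the proof of Lemma~\ref{lem:uniqueness_Cauchy}, namely $|E_n'|\le C E_n$ with $C=\max_{[0,1]}|a'|/\min_{[0,1]}a$, also produces the matching lower bound $E_n(x_0)\ge e^{-Cx_0}E_n(0)\ge e^{-C}a(0)\hat F(d)$, using $x_0\le1$ and $E_n(0)=a(0)\hat F(d)$. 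Comparing the two forces
$$
\hat F(d)\le K:=\frac{\|a\|_\infty e^{C}}{a(0)}\,F_0 .
$$
Contrapositively, any $d$ with $\hat F(d)>K$ makes $v_d<0$ throughout $(0,1]$. As $\hat F$ is unbounded, I choose $d^*>u_0$ with $\hat F(d^*)>K$ and conclude.

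Finally, the complementary case of bounded $\hat F$ is simpler, and I would settle it by an a priori velocity bound. If $\hat F\le L$ on $[0,\infty)$, the same Gronwall argument gives $E_n(x)\le e^{C}a(0)L$ on $[0,1]$ for every $d$; since $E_n(x)\ge u_d'(x)\varphi_n(u_d'(x))-\Phi_n(u_d'(x))$ by \eqref{eq:Fprop}, and this quantity tends to $+\infty$ as $|u_d'|\to\infty$ by Proposition~\ref{prop:phin-Phin}(d), we obtain a bound $|u_d'|\le\bar S$ independent of $d$. Choosing $d^*>u_0+\bar S$ keeps $u_{d^*}>u_0$ on $[0,1]$, where $v_{d^*}'=-a\,f(u_{d^*})<0$ and hence $v_{d^*}<0$ on $(0,1]$, as required. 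The main obstacle throughout is that $\varphi_n^{-1}$ is unbounded, so the solution can acquire large velocities and a naive ``slow descent'' estimate breaks down when $\hat F(d)\to\infty$; the decisive mechanism is instead that a solution launched at high energy overshoots $u=0$, where the forcing disappears, before it can swing $v$ back to $0$, and this is precisely what the energy comparison at the first return time quantifies.
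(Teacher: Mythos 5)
Your proof is correct, and its skeleton is in fact the paper's: argue at the first zero $x_0\in(0,1]$ of $v_d$, show the solution has not crossed below $u=0$ there so that $u_d(x_0)\in[0,u_0]$ and hence $E_n(x_0)\le\|a\|_{L^\infty(0,1)}\hat F(0)$, and connect the energies at $0$ and at $x_0$ through the Gronwall estimate $|E_n'|\le CE_n$ (your lower bound $E_n(x_0)\ge e^{-C}E_n(0)$ is exactly the paper's inequality $E_n(0)\le e^C E_n(x_0)$ read in reverse). Where you genuinely diverge is in which piece of the energy you extract from this comparison. You read off the potential term at $x=0$, obtaining $a(0)\hat F(d)\le e^C\|a\|_{L^\infty(0,1)}\hat F(0)$; this yields a contradiction only when $\hat F(d)$ can be made large, which is what forces your case split and a second, separate argument when $\hat F$ is bounded. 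The paper instead reads off the kinetic term along the whole of $[0,x_0]$: $u'\varphi_n(u')-\Phi_n(u')\le e^C E_n(x_0)\le e^C\|a\|_{L^\infty(0,1)}\hat F(0)$, which by Proposition \ref{prop:phin-Phin}-(d) gives a velocity bound $|u'|\le K^*$ with $K^*$ independent of $d$, whence $u(x_0)\ge d^*-K^*$, contradicting $u(x_0)<u_0$ as soon as $d^*>u_0+K^*$. That single argument subsumes both of your cases; the observation you missed is that the $d$-independent energy bound lives at $x_0$ (only $\hat F(0)$ enters there), so the velocity-bound mechanism of your bounded case works in full generality provided Gronwall is run backward from $x_0$ rather than forward from $0$. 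What your route buys in exchange is an explicit admissible threshold in the generic unbounded case --- any $d^*$ with $\hat F(d^*)>e^C\|a\|_{L^\infty(0,1)}\hat F(0)/a(0)$ works --- which is precisely the kind of quantitative refinement the paper develops later, with sharper constants, in Lemma \ref{lem:rotation u1} under assumption $(f_\mathrm{ap})$.
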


\begin{proof}
Otherwise, let $u$ be a solution of \eqref{eq:ic} with $d=d^*>u_0$ and $\theta_d(1)-\theta_d(0)\ge \pi$. Then, there exists $x_0\in (0,1]$ with $u'(x_0)=0$ and, for all $x\in (0,x_0)$, $u'(x)<0$.
We observe that $u(x_0)< u_0$ as otherwise, for every $x\in(0,x_0)$, $u(x)>u_0$ and hence $u''(x) <0$, contradicting $u'(0)=u'(x_0)=0$. Moreover, we can prove that  $u(x_0)>0$ in the same way as in Lemma  \ref{lem:max-princ}.

We now repeat the argument of the proof of Lemma \ref{lem:uniqueness_Cauchy} so as to obtain 
$$
E_n(x)\le 
e^C E_n(x_0)\quad\mbox{for every }x\in [0,x_0],
$$
where $C=\max_{[0,1]}|a'(x)|/\min_{[0,1]}a(x)$.
From this, recalling \eqref{eq:Fprop}-\eqref{eq:Fprop2} we get
$$
u'(x)\varphi_n(u'(x))-\Phi_n(u'(x)) < e^C a(x_0) F(u(x_0)) < e^C a(x_0) F(0) \quad\mbox{for all }x\in [0,x_0],
$$  
and hence, due to Proposition \ref{prop:phin-Phin}-(d),
$$
\vert u'(x) \vert \leq K^* \quad\mbox{for all }x\in [0,x_0],
$$
for a suitable $K^* > 0$ which does not depend on $u$. Then $u(x_0) \geq u(0) - K^* = d^* - K^*$, which is a contradiction if $d^* > u_0 + K^*$ (since $u(x_0) < u_0$).
\end{proof}

We are now ready to prove the main result of this section.
 
\begin{proof}[{\bf  Proof of Theorem \ref{thm:Pn}}]
Fix $n\in \mathbb N$. We first observe that the continuous dependence for \eqref{eq:ic} yields the continuity of the map $d\mapsto\theta_d(1)$, where $\theta_d$ is the angular variable of the solution $(u_d,v_d)$.
\medbreak

\noindent{\it First part: Existence of $u_{n,1}, \dots, u_{n,k}$.}
\medbreak

If $d=0$, the solution $(u_0,v_0)$ is identically equal to $(0,0)$, hence $\theta_0(1)-\theta_0(0)=0$. 

If $d\in [u_0-\bar\delta,u_0)$, by Lemma \ref{lem:rotation}, $\theta_d(1)-\theta_0(0)>k\pi$. Therefore, by continuity, there exist $k$ values of $d$, denoted by $d_{n,1},\, \dots,\,d_{n,k}$ such that 
$$
0<d_{n,1}<\dots<d_{n,k}<u_0\quad\mbox{and}\quad \theta_{d_{n,j}}(1)-\theta_{d_{n,j}}(0)=j\pi\;\mbox{ for every }j=1,\dots,k.
$$
As a result, to such $d_{n,j}$'s correspond $k$ solutions $u_{n,1}, \dots, u_{n,k}$ of the problem \eqref{eq:Pn>0}. Moreover, since $\theta_{d_{n,j}}$ is monotone increasing in $(0,1)$, cf. \eqref{eq:theta'}, for every $j=1,\dots,k$, there exist exactly $j$ points $0<x_{n,1}<\dots<x_{n,j}<1$ such that $\theta_{d_{n,j}}(x_{n,i})=\left(i+\frac{1}{2}\right)\pi$ for $i=1,\dots,j$. This means that $u_{n,j}(x_{n,i})=u_0$ for every $i=1,\dots,j$ and proves in particular that $u_{n,1}, \dots, u_{n,k}$ are $k$ distinct non-constant solutions of \eqref{eq:Pn>0}. Thus, by Lemma \ref{lem:max-princ}, they solve \eqref{eq:Pn} and have the desired oscillatory behavior. 
\medbreak

\noindent{\it Second part: Existence of $u_{n,k+1}, \dots, u_{n,2k}$.}
\medbreak

The argument is exactly the same with $d$ between $u_0$ and $d^*$ using Lemmas  \ref{lem:rotation} and  \ref{lem:apriori-n}.
\end{proof}

For further convenience, we prove here below an improved version of Lemma~\ref{lem:apriori-n}: precisely, we prove that we can take $d^* = \bar u$, when $f$ satisfies the additional assumption $(f_{\mathrm{ap}})$.

\begin{lemma}\label{lem:rotation u1}
Let $f$ satisfy also $(f_{\mathrm{ap}})$, then the solution of \eqref{eq:ic} with $d=\bar u$ satisfies $\theta_d(1)-\theta_d(0)<\pi$.
\end{lemma}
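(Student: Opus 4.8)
The plan is to argue by contradiction, following exactly the scheme of the proof of Lemma~\ref{lem:apriori-n}, but replacing the crude estimate $|E_n'|\le CE_n$ used there by a sharp \emph{one-sided} logarithmic Gronwall inequality that is tuned to the precise value of the constant $C_a$ appearing in $(f_{\mathrm{ap}})$. First I would suppose, for contradiction, that the solution of \eqref{eq:ic} with $d=\bar u$ satisfies $\theta_d(1)-\theta_d(0)\ge\pi$. As in Lemma~\ref{lem:apriori-n}, this produces a point $x_0\in(0,1]$ with $u'(x_0)=0$ and $u'(x)<0$ for all $x\in(0,x_0)$, and the same two observations made there give $0<u(x_0)<u_0$ (the upper bound from the sign of $u''$, the lower bound by the uniqueness argument of Lemma~\ref{lem:max-princ}).

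Next I evaluate the energy \eqref{eq:energy-un} at the two endpoints. Since $u'(0)=u'(x_0)=0$, the kinetic part vanishes there, so $E_n(0)=a(0)\hat F(\bar u)$ and $E_n(x_0)=a(x_0)\hat F(u(x_0))$. Because $\bar u>u_0$ and $\hat F=F$ on $[u_0,\infty)$ while $\hat F(0)=\int_{u_0}^0 f$, assumption $(f_{\mathrm{ap}})$ reads precisely $\hat F(\bar u)=C_a\hat F(0)$, so that $E_n(0)=a(0)\,C_a\,\hat F(0)$. Note also that $E_n>0$ on $[0,x_0]$ (the kinetic term is strictly positive where $u'\neq 0$, by strict monotonicity of $\varphi_n$ in Proposition~\ref{prop:phin-Phin}), so the logarithm below is well defined.

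The key step is a sharp lower bound for $E_n(x_0)$. Using $E_n(x)\ge a(x)\hat F(u(x))\ge 0$ (from Proposition~\ref{prop:phin-Phin}-(d) and \eqref{eq:Fprop}) together with the identity $E_n'(x)=a'(x)\hat F(u(x))$, I can write $\frac{d}{dx}\log E_n=\frac{a'}{a}\cdot\frac{a\hat F(u)}{E_n}$ with the factor $\frac{a\hat F(u)}{E_n}\in[0,1]$, which yields the one-sided bound $\frac{d}{dx}\log E_n\ge -\frac{{a'}^-}{a}$. Integrating on $[0,x_0]$ gives $E_n(x_0)\ge E_n(0)\exp\!\big(-\int_0^{x_0}\frac{{a'}^-}{a}\,dx\big)$. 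Writing $a(x_0)=a(0)\exp\!\big(\int_0^{x_0}\frac{{a'}^+-{a'}^-}{a}\,dx\big)$ and using the elementary identity $\frac{a(1)}{a(0)}\exp\!\big(\int_0^1\frac{{a'}^-}{a}\big)=\exp\!\big(\int_0^1\frac{{a'}^+}{a}\big)$, which shows that $C_a\ge \exp\!\big(\int_0^1\frac{{a'}^+}{a}\,dx\big)$, I expect to obtain, since $x_0\le 1$ and ${a'}^+\ge 0$,
$$
\frac{E_n(x_0)}{a(x_0)\hat F(0)}\ \ge\ C_a\,\exp\!\Big(-\int_0^{x_0}\tfrac{{a'}^+}{a}\,dx\Big)\ \ge\ C_a\,\exp\!\Big(-\int_0^{1}\tfrac{{a'}^+}{a}\,dx\Big)\ \ge\ 1 .
$$
Hence $E_n(x_0)\ge a(x_0)\hat F(0)$, i.e. $\hat F(u(x_0))\ge \hat F(0)$, which contradicts the strict monotonicity of $\hat F$ on $[0,u_0]$ in \eqref{eq:Fprop2} together with $u(x_0)\in(0,u_0)$.

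The main obstacle, and the only genuinely new ingredient compared with Lemma~\ref{lem:apriori-n}, is to avoid the lossy two-sided estimate and instead extract the \emph{one-sided} logarithmic bound $\frac{d}{dx}\log E_n\ge-\frac{{a'}^-}{a}$, and to recognize that the specific combination defining $C_a$ is exactly what is needed to cancel the worst-case growth of $a(x_0)$ controlled by $\int_0^1\frac{{a'}^+}{a}$; everything else is a faithful repetition of the phase-plane reduction already carried out in Lemma~\ref{lem:apriori-n}.
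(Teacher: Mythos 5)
Your proof is correct and takes essentially the same route as the paper: both argue by contradiction at the first zero $x_0$ of $u'$, derive the one-sided backward Gronwall bound $E_n(0)\le E_n(x_0)\exp\bigl(\int_0^{x_0}\frac{{a'}^-}{a}\,dx\bigr)$ (you via $\frac{d}{dx}\log E_n\ge-\frac{{a'}^-}{a}$, the paper via the integral form of Gronwall), and then exploit the identity $\frac{a(x_0)}{a(0)}\exp\bigl(\int_0^{x_0}\frac{{a'}^-}{a}\,dx\bigr)=\exp\bigl(\int_0^{x_0}\frac{{a'}^+}{a}\,dx\bigr)\le C_a$ to contradict $(f_{\mathrm{ap}})$. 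The remaining differences (reading the contradiction off $\hat F(u(x_0))\ge\hat F(0)$ rather than off the definition of $\bar u$) are purely cosmetic rearrangements of the same estimates.
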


\begin{proof} 
Otherwise, there exists $x_0\in (0,1]$ with $u'(x_0)=0$ and, for all $x\in (0,x_0)$, $u'(x)<0$. As already observed in the proof of Lemma~\ref{lem:apriori-n}, it must be $0 < u(x_0)< u_0$.
\smallbreak

Now, for every $x\in[0,x_0]$, we get 
$$
\begin{aligned}
E_n(x)-E_n(x_0)&=\int_{x_0}^x a'(s)F(u(s))ds=\int_x^{x_0} \frac{a'^-(s)-a'^+(s)}{a(s)}a(s)F(u(s))ds\\
&\le \int_x^{x_0} \frac{a'^-(s)}{a(s)}E_n(s)ds.
\end{aligned}
$$
Therefore, by backward Gronwall's inequality, we obtain for all $x\in[0,x_0]$
$$
E_n(x)\le  E_n(x_0)
\exp \big(\int_x^{x_0}\frac{{a'}^-(s)}{a(s)}ds \big) .
$$
In particular, recalling \eqref{eq:Fprop}-\eqref{eq:Fprop2} we obtain
$$
\begin{aligned}
a(0) F(\bar u)=E_n(0)
&\le 
\exp \big(\int_0^{x_0} \frac{{a'}^-(x)}{a(x)}dx \big) 
a(x_0) F(u(x_0))
\\
&<
\exp \big(\int_0^{x_0} \frac{{a'}^-(x)}{a(x)}dx \big)  a(x_0) F(0).
\end{aligned}
$$
As 
$$
\max_{x_0\in[0,1] }\frac{a(x_0)}{a(0)}
\exp \big(\int_0^{x_0} \frac{{a'}^-(x)}{a(x)}dx \big)  =
\frac{a(1)}{a(0)}
\exp \big(\int_0^{1} \frac{{a'}^-(x)}{a(x)}dx \big),
$$
this  contradicts the choice of $\bar u$.
\end{proof}
 
\begin{remark}\label{rem:apriori}
In view of Lemma \ref{lem:rotation u1}, we can ensure that, when $f$ satisfies $(f_{\mathrm{ap}})$, all the solutions given by Theorem \ref{thm:Pn}
satisfy $u(0) < \bar u$. This is easily understood by checking the final part of the proof of Theorem \ref{thm:Pn}.  
\end{remark}

In the rest of the section, we fix $j \in \{1,\ldots,2k\}$ and we consider a sequence of solutions 
$(u_{n,j})_n$ given by Theorem \ref{thm:Pn}. We are going to establish, for such a sequence, an auxiliary property, which will play an important role in the next section. In what follows, since $j$ is fixed, to simplify the notation we simply write $u_n$ instead of $u_{n,j}$.

\begin{proposition}\label{prop:u-notuni-1}
The sequence $(u_n)$ does not have subsequences which converge uniformly to the constant function $u \equiv u_0$.
\end{proposition}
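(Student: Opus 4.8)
The plan is to argue by contradiction: I would assume that some subsequence (not relabeled) of $(u_n)=(u_{n,j})$ satisfies $\|u_n-u_0\|_{L^\infty(0,1)}\to 0$, and show that this uniform smallness forces $u_n$ to wind around the equilibrium $(u_0,0)$ strictly more than $k$ times. This contradicts the fact that each $u_{n,j}$ was produced in Theorem~\ref{thm:Pn} with a prescribed, and comparatively small, rotation number. In other words, the idea is to re-activate the linearization estimate of Lemma~\ref{lem:rotation}, but using the uniform convergence in place of the continuous dependence on the shooting datum.

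First I would record the rotation number of the fixed family. By construction in the proof of Theorem~\ref{thm:Pn}, setting $d_n:=u_n(0)$, the angular variable satisfies $\theta_{d_n}(1)-\theta_{d_n}(0)=r_j\pi$, where $r_j=j$ if $1\le j\le k$ and $r_j=2k+1-j$ if $k+1\le j\le 2k$. In either case $r_j\in\{1,\dots,k\}$, so the total rotation of $u_{n,j}$ is at most $k\pi$ for \emph{every} admissible $j$, uniformly in $n$.

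The heart of the matter is the lower bound on the rotation coming from proximity to $u_0$. Since $f'(u_0)>\lambda_{k+1}$, I would fix $\bar\lambda\in(\lambda_{k+1},f'(u_0))$ and take $\delta>0$ as in \eqref{eq:fs-1>lambda}. The crucial use of the hypothesis is that uniform convergence yields $|u_n(x)-u_0|<\delta$ for \emph{every} $x\in[0,1]$ simultaneously, once $n$ is large; this is exactly what is needed to insert \eqref{eq:fs-1>lambda} into the expression \eqref{eq:theta'} for $\theta_{d_n}'$ along the whole trajectory. Proceeding then exactly as in the proof of Lemma~\ref{lem:rotation} — using Proposition~\ref{prop:phin-Phin}-(e), comparing with the eigenvalue equation \eqref{eq:theta'-eigen} for $\lambda=\bar\lambda$, and invoking \eqref{eq:theta-lambda-increas} together with \eqref{eq:rotation-eigen} — I would obtain $\theta_{d_n}(1)-\theta_{d_n}(0)>k\pi$ for all large $n$. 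Combined with the previous paragraph this gives the chain $r_j\pi=\theta_{d_n}(1)-\theta_{d_n}(0)>k\pi\ge r_j\pi$, which is absurd, and the proposition follows.

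The hard part, and the reason the statement is phrased with uniform rather than merely pointwise convergence, is precisely securing the bound $|u_n(x)-u_0|<\delta$ uniformly in $x$: only then does \eqref{eq:fs-1>lambda} hold along the entire orbit, so that the comparison argument controls the rotation over all of $[0,1]$ and not just near the shooting point. Feeding $\|u_n-u_0\|_\infty<\delta$ directly into the estimate also neatly sidesteps a subtlety one would otherwise face, namely that the threshold produced by continuous dependence in Lemma~\ref{lem:rotation} could a priori depend on $n$. A final bookkeeping remark is that the initial angle $\theta_{d_n}(0)\in\{0,\pi\}$ (per \eqref{eq:initial-cond-polar}) is compatible with the comparison, since the right-hand side of \eqref{eq:theta'-eigen} is $\pi$-periodic in the angular variable, so shifting the angle by $\pi$ leaves both the equation and the rotation difference unchanged.
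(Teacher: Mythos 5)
Your proof is correct and is essentially the paper's own argument: the paper likewise observes that the threshold $\delta$ coming from \eqref{eq:fs-1>lambda} and the comparison in Lemma~\ref{lem:rotation} is independent of $n$, so any solution of \eqref{eq:Pn} staying uniformly within $\delta$ of $u_0$ would have rotation exceeding $k\pi$, which is incompatible with the fixed rotation $j\pi\le k\pi$ of $u_{n,j}$; the paper phrases this positively as $\max_{[0,1]}|u_n-u_0|\ge\delta$ for all $n$, while you phrase it as a contradiction, but the content is identical.
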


\begin{proof} 
As in the proof of Lemma  \ref{lem:rotation}, there exists $\delta>0$ such that, if $u$ is a solution of 
\eqref{eq:Pn}  with $|u(x)-u_0|<\delta$ for every $x\in [0,1]$, then the corresponding angular variable satisfies $\theta(1)-\theta(0)>k\pi$.

As $\theta_n(1)-\theta_n(0)=j\pi\leq k\pi$ this implies that, for all $n\in \mathbb N$, $\displaystyle \max_{[0,1]}|u_n(x)-u_0|\geq \delta$. This proves the result.
\end{proof}

As a corollary of the above result, we can further obtain the following bound for the minimum and maximum of the solutions
$u_n$.

\begin{corollary}\label{lem:max-min-1}
There exists $\bar\varepsilon>0$ such that for every $n\in\mathbb N$, and for every extremum point $\bar x\in [0,1]$ of $u_n$,
$$
|u_n(\bar x)-u_0|>\bar\varepsilon.
$$
In particular, $u_n(\bar x)>u_0+\bar\varepsilon$ if $\bar x$ is a relative maximizer, and  $u_n(\bar x)<u_0-\bar\varepsilon$ if $\bar x$ is a relative minimizer for $u_n$. 
\end{corollary}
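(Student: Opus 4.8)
The plan is to exploit the near-conservation of the energy $E_n$ from \eqref{eq:energy-un}, turning the single amplitude estimate of Proposition \ref{prop:u-notuni-1} into a uniform gap at every extremum. The first observation is that at any extremum point $\bar x$ of $u_n$ one has $u_n'(\bar x)=0$ (this is the Neumann condition if $\bar x\in\{0,1\}$, and Fermat's rule for $\bar x\in(0,1)$, since $u_n\in C^2$). Because $\varphi_n(0)=\Phi_n(0)=0$, the kinetic part of the energy vanishes there, so that
$$
E_n(\bar x)=a(\bar x)\,\hat F(u_n(\bar x)).
$$
Hence, by the strict monotonicity of $\hat F$ recorded in \eqref{eq:Fprop2}, controlling $|u_n(\bar x)-u_0|$ from below is equivalent to bounding $\hat F(u_n(\bar x))$, that is $E_n(\bar x)$, from below uniformly in $n$.

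To produce a uniform energy lower bound at one well-chosen point, I would use Proposition \ref{prop:u-notuni-1} to fix $\delta>0$ (independent of $n$, and, decreasing it if necessary, with $\delta<u_0$) such that $\max_{[0,1]}|u_n-u_0|\ge\delta$, and pick $z_n\in[0,1]$ realizing this. Since $u_n>0$ by Lemma \ref{lem:max-princ} and the kinetic part of $E_n$ is nonnegative by Proposition \ref{prop:phin-Phin}-(d), using \eqref{eq:Fprop}-\eqref{eq:Fprop2} I obtain
$$
E_n(z_n)\ge a(z_n)\,\hat F(u_n(z_n))\ge \Big(\min_{[0,1]}a\Big)\,\min\{\hat F(u_0-\delta),\hat F(u_0+\delta)\}=:E^*>0,
$$
where $E^*$ does not depend on $n$ (the two values $\hat F(u_0\pm\delta)$ are positive fixed numbers).

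Next I would transport this bound to every point via the differential inequality $|E_n'|\le C E_n$ already established in the proof of Lemma \ref{lem:uniqueness_Cauchy}, with $C=\max_{[0,1]}|a'|/\min_{[0,1]}a$ independent of $n$. Since $x\mapsto e^{Cx}E_n(x)$ is nondecreasing and $x\mapsto e^{-Cx}E_n(x)$ is nonincreasing, this gives $e^{-C}E_n(y)\le E_n(x)\le e^{C}E_n(y)$ for all $x,y\in[0,1]$; in particular $E_n(x)\ge e^{-C}E^*=:\eta>0$ on $[0,1]$, for every $n$. Evaluating at an extremum yields $\hat F(u_n(\bar x))=E_n(\bar x)/a(\bar x)\ge \eta/\max_{[0,1]}a=:\eta'>0$. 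As $\hat F$ is continuous and vanishes only at $u_0$ by \eqref{eq:Fprop}, there is $\bar\varepsilon>0$ with $\hat F<\eta'$ on $[u_0-\bar\varepsilon,u_0+\bar\varepsilon]$, whence $|u_n(\bar x)-u_0|>\bar\varepsilon$, as claimed.

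For the sign refinement I would argue that a relative maximizer $\bar x$ cannot have $u_n(\bar x)<u_0$: on any subinterval where $0<u_n<u_0$ the equation gives $(\varphi_n(u_n'))'=-a\hat f(u_n)>0$ by $(f_\mathrm{sgn})$, so $\varphi_n(u_n')$ and hence $u_n'$ is strictly increasing, i.e. $u_n$ is strictly convex there, which precludes an interior relative maximum; the Neumann condition excludes the boundary case, and $u_n(\bar x)=u_0$ is impossible since $(u_0,0)$ is the equilibrium of \eqref{eq:sys-Pn} (uniqueness would force $u_n\equiv u_0$). Combined with $|u_n(\bar x)-u_0|>\bar\varepsilon$ this gives $u_n(\bar x)>u_0+\bar\varepsilon$, and symmetrically $u_n(\bar x)<u_0-\bar\varepsilon$ at minimizers. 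The only genuinely delicate point, which is the crux of the whole argument, is the \emph{uniformity in $n$} of all the constants: that $\delta$, the Gronwall constant $C$, and the bounds on $a$ and $\hat F$ are all $n$-independent is precisely what allows the near-conservation of $E_n$ to upgrade a one-point amplitude estimate into a uniform gap at every extremum.
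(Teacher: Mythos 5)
Your proposal is correct and rests on exactly the same ingredients as the paper's proof: the vanishing of the kinetic part of $E_n$ at extrema, the Gronwall-type inequality $|E_n'|\le C\,E_n$ with $C=\max_{[0,1]}|a'|/\min_{[0,1]}a$ independent of $n$, Proposition \ref{prop:u-notuni-1}, the properties \eqref{eq:Fprop}--\eqref{eq:Fprop2} of $\hat F$, and the convexity/concavity argument for the sign refinement. The only difference is organizational: the paper argues by contradiction, extracting a subsequence of extrema with values tending to $u_0$ and showing the corresponding energies, hence the solutions themselves, converge uniformly to $u_0$, whereas you run the same two-sided Gronwall estimate in the contrapositive direction to produce an explicit uniform lower bound $\eta>0$ on the energy and hence an explicit $\bar\varepsilon$ -- the two arguments are equivalent in content.
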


\begin{proof}
Suppose by contradiction that in correspondence of $\varepsilon=\frac{1}{m}$ there exists $n(m)\in\mathbb N$ such that $u_{n(m)}$ has an extremum point, denoted by $x_{m}$, such that 
$$
|u_{n(m)}(x_m)-u_0|\le \frac{1}{m}.
$$
By uniqueness of the solution of the Cauchy problem $u_{n(m)}(x_m)\not=u_0$ as otherwise,  $x_m$ being an extremum, $u'_{n(m)}(x_m)=0$ and we would have two distinct solutions of the Cauchy problem $u(x_m)=u_0$ and $u'(x_m)=0$.
As every $u_n$ has exactly $j+1$ extremum points (counting also $x=0$ and $x=1$), the set $\{n(m)\,:\,m\in\mathbb N\}$ is unbounded. Thus, passing if necessary to a subsequence, $\lim_{m\to\infty}n(m)=\infty$.
Moreover, since $(x_m)\subset[0,1]$, up to a subsequence, $(x_m)$ converges to some point $\bar x \in[0,1]$. 

We claim that the corresponding subsequence of $(u_{n(m)})_m$ converges uniformly to $u_0$: in view of Proposition \ref{prop:u-notuni-1}, this will conclude the proof. To prove the claim, we consider the energy along the solution $u_{n(m)}$ defined in \eqref{eq:energy-un} and we argue as in the proof of Lemma \ref{lem:uniqueness_Cauchy} to get by Gronwall's Lemma
\begin{equation}\label{eq:est-gronwall}
E_{n(m)}(x)\le e^C E_{n(m)}(x_m)\quad\mbox{for every }x\in [0,1],
\end{equation} 
with $C=\max_{[0,1]}|a'(x)|/\min_{[0,1]}a(x)$. On the other hand, letting $m\to\infty$, 
$$
|u_{n(m)}(x_{m})-u_0|\to 0\quad\mbox{and consequently}\quad  E_{n(m)}(x_m)\to 0,
$$
where we have used again that $u'_{n(m)}(x_m)=0$. This, together with \eqref{eq:est-gronwall}, gives 
$$
E_{n(m)}\to 0 \quad \mbox{uniformly in }[0,1]\quad\mbox{as }m\to\infty.
$$
Using Proposition \ref{prop:phin-Phin}-(d) and the fact that $a$ is positive and $F$ is non-negative,  
the last convergence yields 
$$
\lim_{m\to\infty}\|F(u_{n(m)})\|_{L^\infty(0,1)}= 0.
$$
By \eqref{eq:Fprop}, we get that $u_{n(m)}\to u_0$ uniformly in $[0,1]$, that proves the claim. 

The last part of the statement follows by the concavity/convexity of $u_n$. Indeed, we know that $u_n$ solves $-\varphi_n'(u')u''=a(x)f(u)$. Hence, being $\varphi_n$ increasing, $a$ positive and using $(f_{\mathrm{sgn}})$, the solution $u_n$ is concave (resp., convex) in intervals in which $u_n>u_0$ (resp., $u_n<u_0$). Therefore, since a concave (resp., convex) function cannot have a minimum (rep. maximum) unless it is constant, $u_n(\bar x)>u_0$ (resp., $u_n(\bar x)<u_0$) if $\bar x$ is a relative maximizer (resp., minimizer) and the proof is concluded.
\end{proof}

\section{Proof of Part (I) - (II) of the main theorem}\label{sec:4}

Let $f'(u_0)>\lambda_{k+1}$ for some $k\in\mathbb N$ and fix any $j\in\{1,\dots,2k\}$. 
Consider the sequence $(u_{n,j})_n$ of solutions of \eqref{eq:Pn} having $\ell$ intersections with $u_0$ (where $\ell=j$ or $2k+1-j$ according to $j\leq k$ or $j>k$) whose existence has been proved in Theorem~\ref{thm:Pn}. As in the final part of the previous section, we will denote this sequence simply by $(u_{n})$. 

We are going to show that the sequence $(u_n)$ converges, in a suitable sense, to a BV-solution of \eqref{eq:Pmain}, having exactly
$\ell$ intersections (possibly in the generalized sense explained after the statement of Theorem \ref{thm:main}) with the constant $u_0$. 
This will be the core of the proof of Part (I) - (II) of Theorem \ref{thm:main} (the other statements in Part (I) - (II) follow as direct consequences of Proposition \ref{prop:Bvsol-description}). We split the next arguments into some steps.

\subsection{A priori estimates}

In the next two lemmas, we prove that the sequence $(u_n)$ is bounded in $W^{1,1}(0,1)$ when either $(f_{\mathrm{ap}})$ or
$(f_{\mathrm{ap}})'$ are satisfied.

\begin{lemma}\label{lem:bdd-Linfty}
Under condition $(f_{\mathrm{ap}})$, the sequence $(u_n)$ is bounded in $L^\infty(0,1)$: more precisely, for all $x\in [0,1]$, $0< u_n(x)< \bar u$. Moreover, $(u'_n)$ is bounded in $L^1(0,1)$ and, consequently, $(u_n)$ is bounded in $W^{1,1}(0,1)$. 
\end{lemma}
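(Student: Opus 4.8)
The plan is to bound $(u_n)$ uniformly in $L^\infty$ first, and then deduce the $L^1$-bound on the derivatives by exploiting the concavity/convexity structure of the solutions established in Corollary~\ref{lem:max-min-1} together with the energy estimate. For the $L^\infty$-bound, I would distinguish two cases according to whether $j \le k$ or $j > k$. When $j > k$ (the solutions with $u_n(0) > u_0$), the bound $u_n(0) < \bar u$ is immediate from Remark~\ref{rem:apriori}, which is precisely the point of the improved a priori estimate Lemma~\ref{lem:rotation u1}: assumption $(f_\mathrm{ap})$ was tailored so that $d^* = \bar u$ works. Since each $u_n$ oscillates around $u_0$ and is concave on the intervals where it exceeds $u_0$, its maximum over $[0,1]$ is attained either at $x=0$ or at one of the interior maximizers; I would argue that the energy control forces all these maxima to be below $\bar u$.

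The key mechanism is the energy $E_n$ defined in~\eqref{eq:energy-un}. At any extremum point $\bar x$ of $u_n$ we have $u_n'(\bar x)=0$, so $E_n(\bar x)=a(\bar x)\hat F(u_n(\bar x))$. The Gronwall-type estimate from the proof of Lemma~\ref{lem:uniqueness_Cauchy}, namely $E_n(x)\le e^C E_n(x_0)$, together with the sharper directional bounds of the backward Gronwall argument used in Lemma~\ref{lem:rotation u1}, lets me compare the value of $\hat F(u_n)$ at a maximizer with its value at an adjacent minimizer. Since a minimizer satisfies $u_n < u_0$ and hence $\hat F(u_n) \le \hat F(0)$, and since $(f_\mathrm{ap})$ is exactly the statement that $\hat F(\bar u) = C_a \hat F(0)$ with $C_a$ the worst-case exponential weight, I would chain these energy comparisons across consecutive oscillation intervals to conclude $u_n(\bar x) < \bar u$ at every maximizer, uniformly in $n$. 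Positivity $u_n > 0$ follows from Lemma~\ref{lem:max-princ}.

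For the $L^1$-bound on $(u_n')$, the plan is to use that on each of the finitely many (exactly $\ell+1$) intervals of monotonicity, $u_n$ is monotone, so $\int |u_n'|\,dx$ over that interval equals the oscillation of $u_n$, i.e.\ the difference of its values at the endpoints. Summing over all $\ell+1$ monotonicity intervals gives $\int_0^1 |u_n'|\,dx$ as a telescoping-type sum of consecutive extremal values of $u_n$, each of which lies in $(0,\bar u)$ by the $L^\infty$-bound. Since the number of intervals $\ell+1$ is fixed (independent of $n$) and each term is bounded by $\bar u$, this yields $\int_0^1 |u_n'|\,dx \le 2(\ell+1)\bar u$ or a similar explicit constant, uniformly in $n$. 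Combined with the $L^\infty$-bound this gives boundedness in $W^{1,1}(0,1)$.

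The main obstacle I anticipate is making the energy-comparison step fully rigorous in the non-autonomous case: the weight $a(x)$ means the energy is not conserved, only controlled by the exponential Gronwall factors, and I must verify that the constant $C_a$ in $(f_\mathrm{ap})$ indeed dominates the accumulated weight across \emph{all} oscillation intervals simultaneously, not just one. The delicate point is to choose the direction of the Gronwall inequality (forward versus backward, using $a'^+$ versus $a'^-$) correctly on each subinterval so that the worst case is captured by the maximum defining $C_a$; this is precisely why $C_a$ is a maximum of two exponential expressions. I expect the bookkeeping of which endpoint is a maximizer and which is a minimizer on each interval, and the corresponding sign of $a'$, to be where the real care is needed.
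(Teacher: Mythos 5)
Your proposal is correct, and its two halves relate to the paper's proof differently. For the $L^\infty$ bound you are doing essentially what the paper does: the energy \eqref{eq:energy-un}, Gronwall inequalities driven by $a'^+/a$ (forward) and $a'^-/a$ (backward), Remark \ref{rem:apriori} for $u_n(0)<\bar u$, and the fact that at any minimizer the potential term lies below $a(\cdot)\hat F(0)$ by \eqref{eq:Fprop2}, so that $(f_\mathrm{ap})$ closes the argument; your reading of $C_a$ as the worst case over the two Gronwall directions is exactly how its two terms arise (one in Lemma \ref{lem:rotation u1}, the other here). Moreover, the ``accumulation'' obstacle you flag at the end is not really there: since the potential energy resets below $a(\cdot)\hat F(0)$ at \emph{every} minimizer, each maximizer needs only a single Gronwall comparison with one adjacent minimizer, over a single interval, so no factors ever multiply. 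The paper streamlines even this: it applies Gronwall once, from the \emph{first} minimum point $\overline x_n$ (which is $x=0$ when $u_n(0)<u_0$), over the whole of $[\overline x_n,1]$ --- Gronwall is insensitive to how many oscillations occur in between --- and handles $[0,\overline x_n]$ by monotonicity together with Remark \ref{rem:apriori}.

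Where you genuinely differ is the $L^1$ bound on $(u_n')$. You exploit the structure coming from the shooting construction: $u_n$ has exactly $j+1$ critical points (cf. the proof of Corollary \ref{lem:max-min-1}), hence a fixed number of monotonicity intervals, on each of which $\int |u_n'|\,dx$ equals an oscillation bounded by $\bar u$; this yields the explicit bound $\|u_n'\|_{L^1(0,1)}\le (j+1)\,\bar u$, uniform in $n$. The paper instead uses that $u_n$ is a global minimizer of the convex functional $I_n(v)=\int_0^1\Phi_n(v')\,dx-\int_0^1 a(x)f(u_n)v\,dx$ and tests with $v\equiv 0$, which via Proposition \ref{prop:phin-Phin}-(c) bounds $\int_0^1|u_n'|\,dx$ by $\int_0^1 a(x)f(u_n)u_n\,dx+1$. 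Both arguments are valid. Yours is more elementary and quantitative, but it leans on the exact oscillation count, i.e. on these being the shooting solutions; the paper's variational argument needs only the $L^\infty$ bound and the minimizing property, and it is the same device reused later (in Propositions \ref{prop:uBvsol} and \ref{prop:strict-conv}), which is presumably why the authors chose it.
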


\begin{proof} For the first part, observe that, for every $n$, there exists $\overline x_n \in [0,1]$ such that $u_n(\overline x_n)\in (0, u_0)$, $u_n'(\overline x_n)=0$ and $u_n(\overline x_n)<u_n(x)<u_n(0)$ for all $x\in (0,\overline x_n)$.
Moreover, by Remark \ref{rem:apriori} we also have $u_n(0) < \bar u$ for every $n$.
We now denote by $E_n$ the energy of $u_n$, i.e. \eqref{eq:energy-un} with $u=u_n$. For every $x\in[0,1]$ we obtain
$$
E'_n(x)\le \frac{a'^+(x)-a'^-(x)}{a(x)}E_n(x)\le \frac{a'^+(x)}{a(x)}E_n(x). 
$$
Thus, by Gronwall's Lemma and using \eqref{eq:Fprop}-\eqref{eq:Fprop2}, 
we get for every $x\in [\overline x_n,1]$
$$
\begin{aligned}
a(x)F(u_n(x))\le E_n(x)
&\le  E_n(\overline x_n) \exp \big(\int_{\overline x_n}^x \frac{{a'}^+(s)}{a(s)}ds\big)
\\
&\leq   
a(\overline x_n)\exp \big(\int_{\overline x_n}^x \frac{{a'}^+(s)}{a(s)}ds\big)  F(0).
\end{aligned}
$$
As 
$$
\max_{\overline x_n, x\in[0,1] }\frac{a(\overline x_n)}{a(x)}
\exp \big(\int_{\overline x_n}^{x}\frac{{a'}^+(s)}{a(s)}ds \big)  =
\frac{a(0)}{a(1)}
\exp \big(\int_0^{1} \frac{{a'}^+(x)}{a(x)}dx \big),
$$
the first part of the thesis follows by the definition of $\bar u$ in $(f_{\mathrm{ap}})$.

As for the boundedness of $(u'_n)$ in $L^1(0,1)$, we proceed as follows. 
By Proposition~\ref{prop:phin-Phin}-(c), we get 
\begin{equation}\label{eq:un'}
\int_0^1 |u_n'|dx\le \int_0^1\sqrt{1+u_n'^2}dx\le \int_0^1 (\Phi_n(u_n')+1) dx.
\end{equation}
Since $u_n$ solves \eqref{eq:Pn}, $u_n$ is a global minimizer of the functional
$$
I_n(v):=\int_0^1\Phi_n(v')dx-\int_0^1a(x)f(u_n)vdx \quad v\in H^1(0,1),
$$
by convexity of $\Phi_n$ and consequently of $I_n$.
Therefore, for every $v\in H^1(0,1)$, by \eqref{eq:un'} we obtain 
$$
\int_0^1 |u_n'|dx\le \int_0^1\Phi_n(v')dx-\int_0^1a(x)f(u_n)(v-u_n)dx +1. 
$$
In particular, for $v\equiv 0$, 
$$
\int_0^1 |u_n'|dx\le \int_0^1a(x)f(u_n)u_n dx +1\le M, 
$$
for a suitable constant $M$, being $(u_n)$ bounded in $L^\infty(0,1)$ and $f$ continuous.

In conclusion, $(u_n)$ is bounded in $W^{1,1}(0,1)$ and so also in $BV(0,1)$.
\end{proof}

\begin{lemma}\label{lem:uprime-bdd-L1'}
Under condition $(f_{\mathrm{ap}})'$, the sequence $(u_n)$ is bounded in $W^{1,1}(0,1)$.
\end{lemma}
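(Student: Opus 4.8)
The plan is to bound the sequence $(u_n)$ in $W^{1,1}(0,1)$ under the hypothesis $(f_{\mathrm{ap}})'$, namely $\|a\|_{L^1(0,1)}\max f^- < 1$. Since the previous lemma (Lemma~\ref{lem:bdd-Linfty}) handled the case $(f_{\mathrm{ap}})$ by exploiting the energy $E_n$ to get an $L^\infty$ bound on $u_n$ and then controlling $\int_0^1|u_n'|$ via the minimization property, the natural strategy here is to follow the same two-step scheme but to replace the energy-based $L^\infty$ bound with an argument tailored to the hypothesis on $\max f^-$. The key difference is that $(f_{\mathrm{ap}})'$ only controls $f$ on $[0,u_0]$, so I expect the mechanism to produce a bound on $u_n$ from below and on the total variation directly, rather than a clean $L^\infty$ bound from above via $\bar u$.

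First I would recall from Corollary~\ref{lem:max-min-1} and the oscillatory structure established in Theorem~\ref{thm:Pn} that each $u_n$ alternates between concave arcs (where $u_n>u_0$) and convex arcs (where $u_n<u_0$), and that the relevant a priori control comes from the lower excursions. On an interval where $u_n<u_0$, we have $f(u_n)=-f^-(u_n)\ge -\max f^-$, and the equation $-(\varphi_n(u_n'))'=a(x)f(u_n)$ can be integrated to control $\varphi_n(u_n')$. Concretely, integrating the equation between an interior minimum point (where $u_n'=0$) and a generic point, and using $|\varphi_n|<1$ together with $\int_0^1 a(x)f^-(u_n)\,dx \le \|a\|_{L^1}\max f^-<1$, should show that $\varphi_n(u_n')$ stays strictly bounded away from $\pm 1$ uniformly in $n$. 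This is precisely the point where the smallness condition $\|a\|_{L^1}\max f^- <1$ is used: it prevents $u_n'$ from blowing up on the downward (convex) arcs, which are the only arcs where the sign of $f$ is controlled by $(f_{\mathrm{ap}})'$ alone.

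Once $\varphi_n(u_n')$ is bounded away from $1$ in absolute value on the descending arcs, I would invert $\varphi_n$ to get a uniform bound on $|u_n'|$ there, which in turn bounds the total drop of $u_n$ across each such arc and hence, combined with the finite number $\ell+1$ of monotone arcs, yields a uniform $L^\infty$ bound on $u_n$ (in particular keeping $u_n$ bounded and positive). With $(u_n)$ bounded in $L^\infty$ in hand, the bound on $\int_0^1|u_n'|\,dx$ follows exactly as in the proof of Lemma~\ref{lem:bdd-Linfty}: use $\int_0^1|u_n'|\le\int_0^1(\Phi_n(u_n')+1)$ from Proposition~\ref{prop:phin-Phin}-(c), exploit that $u_n$ minimizes the convex functional $I_n$, and test with $v\equiv 0$ to obtain $\int_0^1|u_n'|\le \int_0^1 a(x)f(u_n)u_n\,dx+1\le M$, the right-hand side being bounded by continuity of $f$ and the $L^\infty$ bound.

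The main obstacle I anticipate is making the first step quantitatively sharp: the smallness hypothesis in $(f_{\mathrm{ap}})'$ is what guarantees $|\varphi_n(u_n')|$ stays below $1$, but the estimate must be carried out carefully because $\varphi_n$ is only affine outside $[-n,n]$ and $\varphi_n^{-1}$ degenerates as its argument approaches $\pm 1$. The delicate issue is to produce the uniform bound away from $1$ with a constant independent of $n$, using only the integrated equation and $(f_{\mathrm{ap}})'$, since unlike the $(f_{\mathrm{ap}})$ case there is no global energy estimate available to control the ascending arcs. I expect the argument to hinge on the observation that along a convex arc the drop of $\varphi_n(u_n')$ is exactly the integral of $a(x)f^-(u_n)$, which is strictly less than $1$, so that starting from $\varphi_n(u_n')=0$ at the minimum the quantity $|\varphi_n(u_n')|$ never reaches $1$; formalizing this while tracking the alternation of arcs is the technical crux.
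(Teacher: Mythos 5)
Your key estimate is correct, and it is a genuinely different route from the paper's: on each arc where $u_n\le u_0$, the function $v_n:=\varphi_n(u_n')$ satisfies $v_n'=-a(x)f(u_n)=a(x)f^-(u_n)\ge 0$ and vanishes at the minimum point of that arc (interior, or $x=0,1$), so integrating along the arc gives $|v_n(x)|\le\int_{\mathrm{arc}} a\,f^-(u_n)\,ds\le\|a\|_{L^1(0,1)}\max f^-<1$ there; since this bound is independent of $n$ and strictly below $1$, inverting $\varphi_n$ (which agrees with $\varphi^{-1}$ on this range once $n$ is large, the finitely many remaining indices being harmless) gives a uniform bound on $|u_n'|$ on those arcs. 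Two side remarks: your appeal to ``$|\varphi_n|<1$'' is false ($\varphi_n$ is affine and unbounded outside $[-n,n]$), though it is also not needed; and it is $\varphi^{-1}$, not $\varphi_n^{-1}$, that degenerates near $\pm 1$.

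The genuine gap is the passage from this to the $L^\infty$ bound. Knowing $|u_n'|$ on the arcs where $u_n<u_0$ bounds only the excursions below $u_0$ (where $0<u_n\le u_0$ holds anyway); it says nothing about $\max u_n$, which is attained on the arcs where $u_n>u_0$, where $f(u_n)\ge 0$ and $(f_{\mathrm{ap}})'$ gives no control, so the inference ``bounded drop on descending arcs $+$ finitely many arcs $\Rightarrow$ $L^\infty$ bound'' does not follow. The missing (one-line) step is a propagation argument: on an arc where $u_n>u_0$, $u_n$ is concave, i.e.\ $v_n$ is nonincreasing, so $|v_n|$ on that arc is dominated by its values at the two endpoints, and each endpoint is either $0$ or $1$ (where $v_n=0$) or an intersection point lying in the closure of an adjacent ``down'' arc, where your bound applies. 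Hence $|v_n|\le\|a\|_{L^1(0,1)}\max f^-<1$ on all of $[0,1]$, so $|u_n'|\le\varphi^{-1}\left(\|a\|_{L^1(0,1)}\max f^-\right)$ for $n$ large, and then $\|u_n'\|_{L^1}\le\|u_n'\|_{L^\infty}$ and $\|u_n\|_{L^\infty}\le u_0+\|u_n'\|_{L^\infty}$ (each $u_n$ attains the value $u_0$) follow at once; your final detour through the minimization argument of Lemma~\ref{lem:bdd-Linfty} is then superfluous. Once repaired, your argument actually proves more than the statement (a uniform $C^1$ bound, so that for large $n$ the $u_n$ solve the unapproximated equation), whereas the paper's proof is much shorter and global, with no arc-by-arc analysis: it tests the equation with $u_n-\max u_n$ to get $\int_0^1\varphi(u_n')u_n'\,dx\le\|a\|_{L^1(0,1)}\max f^-\int_0^1|u_n'|\,dx$, absorbs the right-hand side using $\varphi(s)s\ge|s|+c$ and the smallness hypothesis, and only afterwards deduces the $L^\infty$ bound from $0<\min u_n<u_0$.
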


\begin{proof}
Consider as test function in \eqref{eq:Pn}, the function $v_n=u_n-\max u_n$, we obtain by Proposition \ref{prop:phin-Phin}-(a) 
$$
\begin{array}{rcl}
\displaystyle
\int_0^1 \varphi(u_n')u_n' \, dx
&\leq&
\displaystyle 
\int_0^1 \varphi_n(u_n')u_n' \, dx
=
\int_0^1 \varphi_n(u_n')v_n'\, dx
\\
&=& 
\displaystyle 
\int_0^1 a(x) f(u_n(x)) (u_n-\max u_n)\, dx
\\
&\leq&
\displaystyle 
\int_0^1 a(x) (\min f) (u_n-\max u_n)\, dx
\\
&\leq&
\displaystyle  
|\min u_n-\max u_n|  \|a\|_{L^1(0,1)} \max f^-
\\
&\leq&
\displaystyle  
\int_0^1 |u_n'|\,dx  \,\|a\|_{L^1(0,1)} \max f^-.
\end{array}
$$
Let $c$ such that, for all $s\in \mathbb R$, $\varphi(s)s-|s|\geq c$. Hence we deduce that
$$
\int_0^1 |u_n'|\,dx -c\leq  \|a\|_{L^1(0,1)} \max f^- \int_0^1 |u_n'|\,dx .
$$
As $\|a\|_{L^1(0,1)} \max f^-<1$, this proves that $\|u_n'\|_{L^1(0,1)}$ is bounded.
\medbreak

The result follows then from the fact that $0<\min u_n<u_0$ for every $n$, since  
$$
\|u_n\|_{L^\infty(0,1)}\le \min u_n+\sup_n\|u_n\|_{L^1(0,1)} < u_0+\sup_n\|u_n\|_{L^1(0,1)} <\infty.
$$
\end{proof}

\subsection{Passing to the limit}
By Lemmas \ref{lem:bdd-Linfty} and \ref{lem:uprime-bdd-L1'}, under either of the assumptions $(f_\mathrm{ap})$ and $(f_\mathrm{ap})'$, $(u_n)$ is bounded in $W^{1,1}(0,1)$.
It then follows (see, for instance, \cite[Theorem 3.23]{AFP}) that, up to subsequences, 
the sequence $(u_n)$ weakly-$\ast$ converges to $u$ in $BV(0,1)$, i.e., 
\begin{equation}\label{eq:lim}
u_n\to u\mbox{ in }L^1(0,1)\quad\mbox{and}\quad Du_n\overset{*}{\rightharpoonup}Du,
\end{equation}
where the last convergence means that $\displaystyle{\int_0^1\phi u_n'dx\to \int_0^1\phi dDu}$ for every $\phi\in C_0(0,1)$.

We now prove that the limit function $u$ is actually a solution of \eqref{eq:Pmain} in the BV-sense.

\begin{proposition}\label{prop:uBvsol}
The limit function $u$ is a BV-solution of \eqref{eq:Pmain}.
\end{proposition}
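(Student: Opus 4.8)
The plan is to show that the $BV$ weak-$*$ limit $u$ of the approximating sequence $(u_n)$ satisfies the variational inequality \eqref{eq:defBV}. By Proposition \ref{prop:ineqW11ineqBV}, it suffices to verify
\[
\int_0^1\sqrt{1+|Dv|^2}\ge \int_0^1\sqrt{1+|Du|^2}+\int_0^1 a(x)f(u)(v-u)\,dx
\]
for every $v\in C^\infty([0,1])$. First I would use that each $u_n$, being a classical solution of \eqref{eq:Pn}, is a global minimizer of the convex functional $I_n(w):=\int_0^1\Phi_n(w')\,dx-\int_0^1 a(x)f(u_n)w\,dx$ over $H^1(0,1)$ (as already observed in the proof of Lemma \ref{lem:bdd-Linfty}); hence for every fixed smooth test function $v$,
\[
\int_0^1\Phi_n(u_n')\,dx-\int_0^1 a(x)f(u_n)u_n\,dx \le \int_0^1\Phi_n(v')\,dx-\int_0^1 a(x)f(u_n)v\,dx.
\]

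Next I would pass to the limit in each term. On the right-hand side, since $v$ is fixed and smooth, $v'$ is bounded, so for $n$ large $\Phi_n(v')=\Phi(v')=\sqrt{1+v'^2}-1$ pointwise, giving $\int_0^1\Phi_n(v')\,dx\to\int_0^1(\sqrt{1+v'^2}-1)\,dx$. The zeroth-order terms $\int_0^1 a(x)f(u_n)v\,dx$ and $\int_0^1 a(x)f(u_n)u_n\,dx$ converge to their counterparts with $u$, because $u_n\to u$ in $L^1$ and in $L^\infty$ (the sequence is uniformly bounded, so $f(u_n)\to f(u)$ strongly by continuity of $f$ and dominated convergence). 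The delicate terms are the two curvature-type integrals. For the left-hand side of the desired inequality I would invoke lower semicontinuity: using Proposition \ref{prop:phin-Phin}-(c), $\Phi_n(u_n')\ge\Phi(u_n')=\sqrt{1+u_n'^2}-1$, so
\[
\liminf_{n\to\infty}\int_0^1\Phi_n(u_n')\,dx\ge \liminf_{n\to\infty}\int_0^1\sqrt{1+u_n'^2}\,dx-1\ge \int_0^1\sqrt{1+|Du|^2}-1,
\]
the last step being the standard $BV$ lower semicontinuity of the area functional under weak-$*$ convergence.

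Combining these, taking $\liminf$ on the left and $\lim$ on the right of the minimality inequality and cancelling the constant $-1$, yields exactly \eqref{eq:defBV} for every smooth $v$, and Proposition \ref{prop:ineqW11ineqBV} then upgrades this to the full $BV$-solution property. The main obstacle I anticipate is the lower semicontinuity step: one must be careful that $\Phi_n(u_n')$ is compared to the area integrand \emph{from below} (which Proposition \ref{prop:phin-Phin}-(c) provides) and that the area functional $w\mapsto\int_0^1\sqrt{1+|Dw|^2}$ is indeed weak-$*$ lower semicontinuous on $BV(0,1)$; this is classical but is the place where the $BV$ framework (rather than naive $W^{1,1}$ estimates) is genuinely needed, since the limit $u$ may fail to be in $W^{1,1}$ and $Du$ can carry a singular part. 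The handling of the right-hand side is routine once the uniform $L^\infty$ bound on $(u_n)$ is used to pass the nonlinearity to the limit.
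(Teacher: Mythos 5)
Your proof is correct and is essentially the paper's own argument: both rest on Proposition \ref{prop:ineqW11ineqBV}, the minimality of $u_n$ for the convex functional $I_n$, the comparison $\Phi_n\ge\Phi$ from Proposition \ref{prop:phin-Phin}-(c), dominated convergence for the terms involving $f(u_n)$, and the $L^1$-lower semicontinuity of the area functional on $BV(0,1)$. The only (harmless) differences are that the paper establishes the inequality for all $v\in H^1(0,1)$ rather than just smooth $v$ (using $\Phi_n\to\Phi$ with domination instead of your observation that $\Phi_n(v')=\Phi(v')$ for $n$ large), and that your passing claim that $u_n\to u$ \emph{in} $L^\infty$ is inaccurate but also unnecessary, since the uniform $L^\infty$ bound together with a.e.\ convergence along a subsequence is what your dominated-convergence step actually uses.
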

\begin{proof}
In view of Proposition \ref{prop:ineqW11ineqBV}, it suffices to show that the inequality 
\begin{equation}\label{eq:main-ineq}
\int_0^1\sqrt{1+v'^2}dx\ge\int_0^1\sqrt{1+|Du|^2}+\int_0^1 a(x)f(u)(v-u)dx
\end{equation}
holds for every $v\in C^\infty([0,1])$. We will prove that \eqref{eq:main-ineq} holds for every $v\in H^1(0,1)$. Indeed, arguing as in the proof of Lemma \ref{lem:bdd-Linfty}, since $u_n$ is a global minimizer of $I_n$, and using Proposition \ref{prop:phin-Phin}-(c), we get for every $v\in H^1(0,1)$
$$
\int_0^1\sqrt{1+u_n'^2}dx\leq \int_0^1 (\Phi_n(u_n')+1)dx\le \int_0^1 (\Phi_n(v')+1)dx-\int_0^1 a(x)f(u_n)(v-u_n)dx.
$$
Passing to the lower limit on both sides we get 
\begin{equation}
\label{eq:aux}
\liminf_{n\to\infty}\int_0^1\sqrt{1+u_n'^2}dx\le \int_0^1 \sqrt{1+v'^2}dx-\int_0^1 a(x)f(u)(v-u)dx,
\end{equation}
where we applied the Dominated Convergence Theorem to a subsequence of the right-hand side, recalling that $u_n\to u$ in $L^1(0,1)$, and we used that $\Phi_n\to\Phi$.
We know that the functional $J: BV(0,1)\to\mathbb R: v\mapsto \int_0^1\sqrt{1+|Dv|^2}$ is lower semicontinuous with respect to the $L^1$-convergence, cf. \cite[Thm 14.2]{Gi}, thus by \eqref{eq:lim},
$$
\int_0^1\sqrt{1+|Du|^2}\le \liminf_{n\to\infty}\int_0^1 \sqrt{1+u_n'^2}dx,
$$
which, together with \eqref{eq:aux} proves the thesis.  
\end{proof}

Actually, a stronger convergence of $u_n$ to $u$ can be established. More precisely:

\begin{proposition}\label{prop:strict-conv}
Up to a subsequence, the sequence $(u_n)$ converges strictly to $u$ in $BV(0,1)$, i.e., 
\begin{equation}\label{eq:strict-conv}
u_n\to u\mbox{ in }L^1(0,1)\quad\mbox{and}\quad \int_0^1 |u_n'|dx \to \int_0^1 |Du|,
\end{equation}
and furthermore, 
\begin{equation}\label{eq:conv-lungh-graph}
\int_0^1 \sqrt{1+u_n'^2}dx \to \int_0^1\sqrt{1+|Du|^2}.
\end{equation}
\end{proposition}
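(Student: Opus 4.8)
The plan is to complement the lower semicontinuity inequality
$$
\int_0^1\sqrt{1+|Du|^2}\le\liminf_{n\to\infty}\int_0^1\sqrt{1+u_n'^2}\,dx,
$$
already obtained in the proof of Proposition~\ref{prop:uBvsol} (via \cite[Thm 14.2]{Gi}), with the matching upper estimate
$$
\limsup_{n\to\infty}\int_0^1\sqrt{1+u_n'^2}\,dx\le\int_0^1\sqrt{1+|Du|^2};
$$
together these yield \eqref{eq:conv-lungh-graph} along the subsequence fixed in \eqref{eq:lim}. The strict convergence \eqref{eq:strict-conv} (whose first half, $u_n\to u$ in $L^1$, is already contained in \eqref{eq:lim}) will then be deduced from \eqref{eq:conv-lungh-graph} by a Reshetnyak-type argument.

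For the $\limsup$ estimate I would test the minimality of $u_n$ against a good \emph{smooth} approximation of $u$. By Lemma~\ref{lem:W11toBV} and a standard mollification (recall that $v\mapsto\int_0^1\sqrt{1+v'^2}\,dx$ is $1$-Lipschitz with respect to the $L^1$-norm of $v'$, since $|\sqrt{1+a^2}-\sqrt{1+b^2}|\le|a-b|$), I fix $(w_m)\subset C^\infty([0,1])$ with $w_m\to u$ in $L^1(0,1)$ and $\int_0^1\sqrt{1+w_m'^2}\,dx\to\int_0^1\sqrt{1+|Du|^2}$. As each $w_m$ is smooth, $L_m:=\|w_m'\|_{L^\infty(0,1)}<\infty$, hence $\Phi_n(w_m')=\Phi(w_m')$ whenever $n\ge L_m$. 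Using that $u_n$ minimizes $I_n$ (as in the proof of Lemma~\ref{lem:bdd-Linfty}) together with Proposition~\ref{prop:phin-Phin}-(c), for $n\ge L_m$ I obtain
$$
\int_0^1\sqrt{1+u_n'^2}\,dx\le\int_0^1(\Phi_n(u_n')+1)\,dx\le\int_0^1\sqrt{1+w_m'^2}\,dx-\int_0^1 a(x)f(u_n)(w_m-u_n)\,dx.
$$
Letting $n\to\infty$, the last integral converges to $\int_0^1 a f(u)(w_m-u)\,dx$ (because $u_n\to u$ in $L^1$, the sequence is uniformly bounded in $L^\infty$, and $a,f$ are continuous), so that $\limsup_n\int_0^1\sqrt{1+u_n'^2}\le\int_0^1\sqrt{1+w_m'^2}-\int_0^1 af(u)(w_m-u)$. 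Finally letting $m\to\infty$, using $w_m\to u$ in $L^1$ and the convergence of the lengths, gives the desired $\limsup$ bound and hence \eqref{eq:conv-lungh-graph}.

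To pass from \eqref{eq:conv-lungh-graph} to \eqref{eq:strict-conv} I would introduce the $\mathbb{R}^2$-valued finite Radon measures on $(0,1)$ given by $\mu_n:=(\mathcal{L}^1,Du_n)$ and $\mu:=(\mathcal{L}^1,Du)$, for which $|\mu_n|((0,1))=\int_0^1\sqrt{1+u_n'^2}\,dx$ and $|\mu|((0,1))=\int_0^1\sqrt{1+|Du|^2}$. By \eqref{eq:lim} one has $\mu_n\overset{*}{\rightharpoonup}\mu$ (the first components coincide, the second converge), while \eqref{eq:conv-lungh-graph} gives $|\mu_n|((0,1))\to|\mu|((0,1))$. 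Applying the Reshetnyak continuity theorem \cite[Theorem 2.39]{AFP} with the bounded continuous function $(\xi_1,\xi_2)\mapsto|\xi_2|$ on $S^1$, and noting that testing with it reproduces exactly $\int_0^1|u_n'|\,dx$ on the left and $\int_0^1|Du|$ on the right, I conclude $\int_0^1|u_n'|\,dx\to\int_0^1|Du|$, which is \eqref{eq:strict-conv}.

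The main obstacle is the $\limsup$ estimate. Since $u$ is merely $BV$, it cannot be used directly as a competitor in $I_n$, and the approximating derivatives $w_m'$ must be controlled against the $n$-dependent nonlinearity $\Phi_n$, which grows \emph{quadratically} at infinity; a naive dominated-convergence bound therefore fails. Choosing the $w_m$ smooth circumvents this, because then $\Phi_n(w_m')$ coincides with $\Phi(w_m')$ for all $n$ large (depending on $m$), which renders the double limit in $n$ and $m$ harmless. The concluding passage is then purely measure-theoretic and is taken care of by Reshetnyak's theorem.
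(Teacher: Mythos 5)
Your proof is correct and takes essentially the same route as the paper's: the lower-semicontinuity inequality for the area functional is complemented by a matching $\limsup$ bound obtained by testing the minimality of $u_n$ for $I_n$ against approximants of $u$ supplied by Lemma~\ref{lem:W11toBV}, and \eqref{eq:strict-conv} is then deduced from \eqref{eq:conv-lungh-graph} via Reshetnyak-type continuity --- the paper's citation of \cite[Fact 3.1]{A} with $f(x,p)=|p|$ plays exactly the role of your appeal to \cite[Theorem 2.39]{AFP}. The only difference is in handling the double limit: you mollify the competitors so that $\Phi_n(w_m')\equiv\Phi(w_m')$ once $n\ge\|w_m'\|_{L^\infty(0,1)}$, whereas the paper first proves the $\limsup$ inequality for every $v\in H^1(0,1)$ by dominated convergence (where it does work, since $\Phi_n\le\Phi_1$ and $\Phi_1(v')\in L^1$ for $v\in H^1$, so your claim that dominated convergence ``fails'' applies only to general $W^{1,1}$ competitors) and then extends it to all of $W^{1,1}(0,1)$ by density before inserting the approximants $v_k$.
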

\begin{proof}
As $u_n\to u$ in $L^1(0,1)$,  if we prove \eqref{eq:conv-lungh-graph}, by \cite[Fact 3.1]{A} with $f(x,p)=|p|$, we immediately get also \eqref{eq:strict-conv}. By the lower semicontinuity of the functional $J$ defined in the proof of Proposition \ref{prop:uBvsol}, we have
$$
\int_0^1\sqrt{1+|Du|^2}\le \liminf_{n\to\infty}\int_0^1 \sqrt{1+u_n'^2}dx.
$$
On the other hand, since $u_n$ is a global minimizer of $I_n$ in $H^1(0,1)$, as in the proof of  Proposition \ref{prop:uBvsol}, we obtain, for every $v\in H^1(0,1)$,
$$
\int_0^1 \sqrt{1+u_n'^2}dx \le \int_0^1 (\Phi_n(u_n')+1)dx \le \int_0^1 (\Phi_n(v')+1)dx-\int_0^1 a(x)f(u_n)(v-u_n)dx.
$$
Therefore, passing to the upper limit on both sides, we get by Dominated Convergence Theorem applied if necessary to a subsequence
\begin{equation}\label{eq:limsup}
\limsup_{n\to\infty}\int_0^1 \sqrt{1+u_n'^2}dx \le \int_0^1 \sqrt{1+v'^2}dx-\int_0^1 a(x)f(u)(v-u)dx
\end{equation}
for all $v\in H^1(0,1)$. Since $ C^\infty([0,1])$ is dense in  $W^{1,1}(0,1)$, inequality \eqref{eq:limsup} actually holds for every $v\in W^{1,1}(0,1)$. 
Moreover, by Lemma \ref{lem:W11toBV},   there exists a sequence $(v_k)\subset W^{1,1}(0,1)$ such that $v_k\to u$ in $L^1(0,1)$ and $\int_0^1\sqrt{1+v_k'^2}dx\to\int_0^1\sqrt{1+|Du|^2}$. 
Thus, applying \eqref{eq:limsup} to $v_k$ and taking the limit as $k\to\infty$ we get
$$
\limsup_{n\to\infty}\int_0^1 \sqrt{1+u_n'^2}dx \le  \int_0^1\sqrt{1+|Du|^2},
$$
and the proof is concluded.
\end{proof}

As a consequence of Proposition \ref{prop:strict-conv}, we can establish a first important fact about the solution $u$.
Precisely: 

\begin{proposition}\label{prop:u-neq-1}
The solution $u$ is not identically equal to $u_0$.
\end{proposition}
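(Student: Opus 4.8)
The plan is to argue by contradiction: assuming that the limit $u$ coincides with the constant $u_0$, I would deduce that the approximating sequence $(u_n)$ converges to $u_0$ \emph{uniformly} on $[0,1]$, which is precisely what Proposition~\ref{prop:u-notuni-1} forbids. The key observation is that the two modes of convergence already available — the $L^1$-convergence in \eqref{eq:lim} and the strict BV-convergence in Proposition~\ref{prop:strict-conv} — together control the full $W^{1,1}$-norm of $u_n-u_0$; since the $u_n$ are genuine (classical, hence $W^{1,1}$) functions, this suffices to upgrade to uniform convergence.

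Concretely, suppose $u\equiv u_0$. From \eqref{eq:lim} one immediately has $\|u_n-u_0\|_{L^1(0,1)}\to 0$. Moreover, since the limit is the constant $u_0$, its distributional derivative vanishes, so that $\int_0^1|Du_0|=0$; the strict convergence \eqref{eq:strict-conv} then forces $\int_0^1|u_n'|\,dx\to 0$, i.e. $\|u_n'\|_{L^1(0,1)}\to 0$. Writing $\bar u_n:=\int_0^1 u_n\,dx$ for the average, a standard Poincaré--Wirtinger estimate gives, for every $x\in[0,1]$,
$$
|u_n(x)-\bar u_n| = \Big| \int_0^1\Big(\int_t^x u_n'(s)\,ds\Big)\,dt \Big| \le \|u_n'\|_{L^1(0,1)},
$$
and therefore
$$
\|u_n - u_0\|_{L^\infty(0,1)} \le |\bar u_n - u_0| + \|u_n-\bar u_n\|_{L^\infty(0,1)} \le \|u_n-u_0\|_{L^1(0,1)} + \|u_n'\|_{L^1(0,1)}\longrightarrow 0.
$$
Thus $u_n\to u_0$ uniformly on $[0,1]$, contradicting Proposition~\ref{prop:u-notuni-1}; this contradiction shows $u\not\equiv u_0$.

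The delicate input is really Proposition~\ref{prop:strict-conv}: without the strict convergence of the total variations one would only have the lower-semicontinuity bound $\int_0^1|Du|\le\liminf_n\int_0^1|u_n'|\,dx$, which is too weak to exclude a loss of oscillation in passing to the limit. A more hands-on alternative — using Corollary~\ref{lem:max-min-1} to keep $u_n$ bounded away from $u_0$ at an extremum and then propagating this through the energy $E_n$ via Gronwall's inequality — runs into the obstacle that a uniform positive lower bound on $E_n$ does not translate into a lower bound on $a(x)F(u_n(x))$, since the ``kinetic'' term $u_n'\varphi_n(u_n')-\Phi_n(u_n')$ may carry all of the energy where $|u_n'|$ is large. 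The argument above avoids this entirely by working directly at the level of the $W^{1,1}$-norm.
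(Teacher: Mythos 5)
Your proof is correct and follows essentially the same route as the paper: assume $u\equiv u_0$, invoke the strict convergence of Proposition~\ref{prop:strict-conv} to get $\|u_n'\|_{L^1(0,1)}\to 0$, upgrade this (together with the $L^1$-convergence) to uniform convergence of $u_n$ to $u_0$ by an elementary estimate, and contradict Proposition~\ref{prop:u-notuni-1}. The only cosmetic difference is that the paper bounds the oscillation $\max u_n-\min u_n$ by $\int_0^1|u_n'|\,dx$ while you pass through the mean $\bar u_n$ with a Poincar\'e--Wirtinger-type inequality, which is the same idea.
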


\begin{proof}
Assume by contradiction that $u \equiv u_0$.
Then, by Proposition \ref{prop:strict-conv} it holds that $u_n \to u_0$ strictly in $BV(0,1)$, so that, in particular, 
$$
\lim_{n\to\infty} \int_0^1|u_n'|dx  = \int_0^1|Du_0| = 0.
$$
Now, with obvious meaning of $x_M$ and $x_m$,
$$
\max_{x\in[0,1]}u_n-\min_{x\in[0,1]}u_n=u_n(x_M)-u_n(x_m)=\int_{x_m}^{x_M}u'_n(x)dx\le \int_0^1|u_n'|dx,
$$
hence, we can infer that $u_n \to u_0$ uniformly on $[0,1]$. Since
this is excluded by Proposition \ref{prop:u-notuni-1}, the proof is concluded.
\end{proof}

The next step of the proof will consist of course in showing that the number of generalized intersections with $u_0$ is preserved when passing to the limit. This will be done in the next subsection, using in an essential way the following two results, which basically ensure that the convergence of $u_n$ to $u$ is stronger in the subintervals of $(0,1)$ where $(u'_n)$ is bounded.

\begin{proposition}\label{prop:vn-conv-unif}
Let $v_n=\varphi_n(u_n')$. Then, up to a subsequence, $(v_n)$ converges uniformly in $[0,1]$. As a consequence, denoted by $v$ the limit function, $v\in C([0,1])$. 
\end{proposition}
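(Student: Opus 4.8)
The plan is to obtain the conclusion as a direct application of the Arzelà--Ascoli theorem, the point being that $(v_n)$ is a uniformly Lipschitz, uniformly bounded sequence. The starting observation is that, since each $u_n$ is a classical solution of \eqref{eq:Pn>0}, the function $v_n=\varphi_n(u_n')$ is of class $C^1([0,1])$ and, reading off the equation directly, satisfies
$$
v_n'(x)=-a(x)f(u_n(x))\qquad\text{in }[0,1],
$$
together with the boundary value $v_n(0)=\varphi_n(u_n'(0))=\varphi_n(0)=0$.

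First I would record a uniform $L^\infty$ bound on $(u_n)$. Under $(f_{\mathrm{ap}})$ this is exactly Lemma \ref{lem:bdd-Linfty}, which gives $0<u_n(x)<\bar u$; under $(f_{\mathrm{ap}})'$ it follows from the $W^{1,1}$ bound of Lemma \ref{lem:uprime-bdd-L1'} together with the one-dimensional embedding $W^{1,1}(0,1)\hookrightarrow L^\infty(0,1)$. In either case there is $R>0$ with $\|u_n\|_{L^\infty(0,1)}\le R$ for every $n$. Since $f$ is continuous and $a\in C^1([0,1])$, this immediately yields a bound independent of $n$,
$$
\|v_n'\|_{L^\infty(0,1)}=\|a\,f(u_n)\|_{L^\infty(0,1)}\le \|a\|_{L^\infty(0,1)}\,\max_{[0,R]}|f|=:L.
$$

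From here the conclusion is essentially automatic. Writing $v_n(x)=\int_0^x v_n'(s)\,ds$ and using the bound on $v_n'$ on the unit interval, the sequence $(v_n)$ is uniformly bounded, $\|v_n\|_{L^\infty(0,1)}\le L$, and uniformly Lipschitz, $|v_n(x)-v_n(y)|\le L|x-y|$, hence equicontinuous. The Arzelà--Ascoli theorem then extracts a subsequence converging uniformly on $[0,1]$ to a limit $v$, and a uniform limit of continuous functions is continuous, so $v\in C([0,1])$.

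I do not expect a genuine obstacle in this statement: the only substantive input beyond the definition of $v_n$ and the equation it solves is the $L^\infty$ bound on $(u_n)$, which is already in hand from the a priori estimates, after which everything reduces to a standard compactness argument. The one point deserving care is simply to verify that the $L^\infty$ bound holds under \emph{both} structural assumptions $(f_{\mathrm{ap}})$ and $(f_{\mathrm{ap}})'$, which is why I would split the argument into these two cases at the very beginning.
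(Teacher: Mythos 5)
Your proposal is correct and follows essentially the same route as the paper's proof: both deduce $v_n'=-a\,f(u_n)$ from the equation, use the uniform $L^\infty$ bound on $(u_n)$ from Lemmas \ref{lem:bdd-Linfty} and \ref{lem:uprime-bdd-L1'} to bound $v_n'$ uniformly, exploit $v_n(0)=0$ to bound $v_n$ itself, and conclude by Arzel\`a--Ascoli. Your explicit case distinction on how the $L^\infty$ bound is obtained under $(f_{\mathrm{ap}})$ versus $(f_{\mathrm{ap}})'$ is a minor (and welcome) elaboration of what the paper leaves implicit.
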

\begin{proof}
We first observe that, by the regularity of $u_n$ and of $\varphi_n$, $v_n=\varphi_n(u_n')\in C^1([0,1])$ for every $n$. If we prove that $(v_n')$ is bounded in $C^1([0,1])$, by the Arzel\`a-Ascoli Theorem, we get the thesis. 
By Lemmas \ref{lem:bdd-Linfty} and \ref{lem:uprime-bdd-L1'}, we know that $(u_n)$ is bounded in $L^\infty(0,1)$. By the second equation in \eqref{eq:sys-Pn}, we get for every $n\in\mathbb N$
$$
|v'_n(x)|\le \|a\|_{L^\infty(0,1)}\max_{s\in[0,\sup_n \|u_n\|_{L^\infty}]}f(s)=:C_2\quad\mbox{for every }x\in [0,1].
$$
Consequently, as $v_n(0)=0$, this gives
$\|v_n\|_{C^1([0,1])}\le 2C_2$ for every $n\in\mathbb N$ and concludes the proof.
\end{proof}

\begin{proposition}\label{prop:un-conv-unif}
Let $v$ be the limit function of $(v_n)$ introduced in Proposition~\ref{prop:vn-conv-unif} and let $[\alpha,\beta]\subseteq [0,1]$. If $|v|\le 1-\varepsilon$ in $[\alpha,\beta]$ for some $\varepsilon>0$, then up to a subsequence, $(u_n)$ converges uniformly to $u$ in $[\alpha,\beta]$ and consequently $u\in C([\alpha,\beta])$. 
\end{proposition}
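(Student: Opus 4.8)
The plan is to exploit the pointwise identity $u_n' = \varphi_n^{-1}(v_n)$ in order to show that, on $[\alpha,\beta]$, the derivatives $(u_n')$ are \emph{uniformly} bounded for $n$ large; this will make $(u_n)$ equi-Lipschitz on $[\alpha,\beta]$, so that uniform convergence of a subsequence follows from the Ascoli--Arzel\`a theorem, and the uniform limit will be forced to coincide with the $L^1$-limit $u$.

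The key elementary observation I would record first is the following. Since $\varphi_n\equiv\varphi$ on $[-n,n]$ and $\varphi_n$ is a strictly increasing bijection of $\mathbb R$ (Proposition~\ref{prop:phin-Phin}-(b)), its inverse satisfies $\varphi_n^{-1}\equiv\varphi^{-1}$ on the interval $[-\varphi(n),\varphi(n)]=\varphi([-n,n])$, where $\varphi^{-1}(t)=t/\sqrt{1-t^2}$ for $|t|<1$. The crucial point is that $\varphi(n)=n/\sqrt{1+n^2}\to 1^-$, so the ``good interval'' $[-\varphi(n),\varphi(n)]$ eventually contains any fixed compact subinterval of $(-1,1)$; this is exactly what lets the bound $|v|\le 1-\varepsilon$ be converted into a finite, $n$-independent bound on $u_n'$.

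Next I would transfer the bound from $v$ to $v_n$ and then to $u_n'$. Since $v_n\to v$ uniformly on $[0,1]$ (Proposition~\ref{prop:vn-conv-unif}) and $|v|\le 1-\varepsilon$ on $[\alpha,\beta]$, there is $N_1$ with $|v_n|\le 1-\varepsilon/2$ on $[\alpha,\beta]$ for all $n\ge N_1$; choosing also $N_2$ so that $\varphi(n)\ge 1-\varepsilon/2$ for $n\ge N_2$, I obtain, for every $n\ge N:=\max\{N_1,N_2\}$ and every $x\in[\alpha,\beta]$,
$$
|u_n'(x)|=\bigl|\varphi_n^{-1}(v_n(x))\bigr|=\bigl|\varphi^{-1}(v_n(x))\bigr|\le \varphi^{-1}\!\bigl(1-\tfrac{\varepsilon}{2}\bigr)=:M,
$$
using that $\varphi^{-1}$ is odd and increasing (and $M<\infty$ precisely because $1-\varepsilon/2<1$). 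Hence $(u_n)_{n\ge N}$ is $M$-Lipschitz on $[\alpha,\beta]$. Combining this equi-Lipschitz bound with the $L^\infty(0,1)$ bound on $(u_n)$ from Lemmas~\ref{lem:bdd-Linfty}--\ref{lem:uprime-bdd-L1'}, Ascoli--Arzel\`a yields a subsequence converging uniformly on $[\alpha,\beta]$ to some $w\in C([\alpha,\beta])$. Since $u_n\to u$ in $L^1(0,1)$, a further subsequence converges to $u$ a.e.\ on $[\alpha,\beta]$, so $w=u$ a.e.; thus $u$ admits the continuous representative $w$ and $u_n\to u$ uniformly on $[\alpha,\beta]$, as claimed.

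I expect the only genuinely delicate point to be the interplay behind the choice of $N$: one must simultaneously ensure that the approximation threshold $\varphi(n)$ has overtaken the level $1-\varepsilon/2$ \emph{and} that $v_n$ is already within $\varepsilon/2$ of $v$, so that the identification $\varphi_n^{-1}=\varphi^{-1}$ is legitimate exactly where it is applied. Once this is arranged, the identity $\varphi_n^{-1}\equiv\varphi^{-1}$ on $[-\varphi(n),\varphi(n)]$ does all the work, and the remainder is a routine compactness-plus-uniqueness-of-limit argument.
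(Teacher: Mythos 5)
Your proof is correct and follows essentially the same route as the paper: transfer the bound $|v|\le 1-\varepsilon$ to $v_n$ via the uniform convergence of Proposition \ref{prop:vn-conv-unif}, use $\varphi_n^{-1}=\varphi^{-1}$ on the relevant range to get a uniform bound $|u_n'|\le\varphi^{-1}(1-\varepsilon')$ on $[\alpha,\beta]$, and conclude by Ascoli--Arzel\`a together with the $L^\infty$ bound and identification of the limit with $u$. If anything, you are more explicit than the paper on two points it leaves implicit, namely that $\varphi(n)\to 1^-$ guarantees $\varphi_n^{-1}\equiv\varphi^{-1}$ where it is applied, and that the uniform limit coincides with the $L^1$-limit $u$.
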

\begin{proof}
Let $|v|\le 1-\varepsilon$ in $[\alpha,\beta]$, by the uniform convergence proved in Proposition~\ref{prop:vn-conv-unif}, for $n$ sufficiently large, 
$$
|v_n(x)|\le 1-\varepsilon'\quad\mbox{for every }x\in [\alpha,\beta],
$$
for some $\varepsilon'>0$. Consequently, for $n$ large, 
$$
\varphi_n^{-1}(v_n(x))=\varphi^{-1}(v_n(x))\quad\mbox{for all }x\in [\alpha,\beta].
$$
Hence, for $n$ large enough,
\begin{equation}
\label{eq:bound-u'n}
|u'_n(x)|=|\varphi^{-1}(v_n(x))|\le \varphi^{-1}(1-\varepsilon')\quad\mbox{for every }x\in [\alpha,\beta]
\end{equation}
Since by Lemmas \ref{lem:bdd-Linfty} and \ref{lem:uprime-bdd-L1'} we already know that $(u_n)$ is bounded in $L^\infty(0,1)$, \eqref{eq:bound-u'n} ensures that $(u_n)$ is bounded in $C^1([\alpha,\beta])$ and so, it admits a subsequence that converges uniformly to $u$ in $[\alpha,\beta]$. 
\end{proof}

\subsection{Proof of (I) and (II) of Theorem \ref{thm:main}}

For  every $n$, let $x_{n,1},\dots,x_{n,\ell}$ be the $\ell$ intersection points of $u_n$ with $u_0$, so that $0<x_{n,1}<\dots<x_{n,\ell}<1$. 
\medbreak

\noindent{\it Step 1: There exists $\delta>0$ such that, for all $n\in \mathbb N$,
\begin{itemize}
\item[(1.a)] $x_{n,1} >\delta$;
\item[(1.b)] $x_{n,i+1}-x_{n,i}>\delta$ for every $i=1,\dots,\ell-1$;
\item[(1.c)] $x_{n,\ell}<1-\delta$. 
\end{itemize}} 

By the uniform convergence of $(v_n)$ to $v$ (Proposition \ref{prop:vn-conv-unif}), as $u'_n(0)=v_n(0)=0$ for every $n$,  we have $v(0)=0$ and   there exist $\varepsilon$, $\delta>0$ such that $|v|\le 1-\varepsilon$ in $[0,\delta]$. Thus, by Proposition \ref{prop:un-conv-unif}, $u_n$ converges uniformly to $u$ in $[0,\delta]$. Now, by Corollary~\ref{lem:max-min-1}, $|u_n(0)- u_0|\geq  \bar\varepsilon$ for every $n$. Then also $|u(0) - u_0|\geq \bar\varepsilon$. Since $u$ is continuous in $[0,\delta]$, there exist $\varepsilon'$, $\delta'>0$ such that 
$|u- u_0| \geq \varepsilon'$  in $[0,\delta']$. This proves  (1.a) by the uniform convergence of $u_n$ to $u$ in $[0,\delta]$.
\medbreak

One  can argue similarly to prove (1.c). 
\medbreak

It remains to prove (1.b). For every $n$, $u_n(x_{n,i})= u_n(x_{n,i+1})=u_0$, hence, there exists $\bar x_n\in (x_{n,i}, x_{n,i+1})$ such that $u_n'(\bar x_n)=v_n(\bar x_n)=0$. Up to a subsequence 
$\lim_{n\to\infty}\bar x_{n}=:\bar x\in (0,1)$ and by
 Proposition \ref{prop:vn-conv-unif}, we get $v(\bar x)=0$. By continuity of $v$, there exist $\varepsilon$, $\delta>0$ such that $|v|\le 1-\varepsilon$ in $(\bar x-\bar\delta,\bar x+\bar\delta)$. Therefore, the convergence $u_n\to u$ is uniform in $(\bar x-\bar\delta,\bar x+\bar\delta)$, by Proposition \ref{prop:un-conv-unif}, and so, in particular,  $u_n(\bar x_n)\to u(\bar x)$ as $n\to \infty$.
Since $\bar x_n$ is an extremum point for $u_n$, by Corollary \ref{lem:max-min-1}, $|u_n(\bar x_n)-u_0|>\bar\varepsilon$ for every $n$, hence 
\begin{equation}\label{eq:u-1>eps}
|u(\bar x)-u_0|\ge \bar \varepsilon>0.
\end{equation}
By the uniform convergence of $(u_n)$ to $u$ on $(\bar x-\bar\delta,\bar x+\bar\delta)$, we deduce the existence of $\delta\leq \bar\delta$ such that, for all $x\in [\bar x-\delta,\bar x+\delta]$, $|u_n(x)-u_0|\geq \bar \varepsilon/2$ which proves (1.b).
\smallbreak

This concludes the proof of this first step.
\medbreak

\noindent{\it Step 2: For every $i=1,\ldots,\ell$, the sequence $(x_{n,i})$ has a limit denoted $x_i$}.
Let us prove it by recurrence on $i\in\{0,1,\ldots, \ell\}$ denoting $x_{n,0}=0$ and $x_{n,\ell+1}=1$.

Assume by contradiction the existence of $i\in \{1,\ldots, \ell\}$ such that 
$$
\lim_{n\to \infty} x_{n,i-1}=x_{i-1} \quad\mbox{ and } \quad
\liminf_{n\to \infty} x_{n,i}=:x_{i,1} < \limsup_{n\to \infty} x_{n,i}=:x_{i,2}.
$$
Let $(x_{n_k,i})$ be a subsequence converging to  $x_{i,1}$ and $(x_{n_p,i})$ be a subsequence converging to  $x_{i,2}$ and assume without loss of generality that
$$
\begin{cases}
u_{n}(x)>u_0,&\text{if } x\in (x_{n,i-1}, x_{n,i}),
\\
u_{n}(x)<u_0,&\text{if } x\in (x_{n,i}, x_{n,i+1}).
\end{cases}
$$
Observe that, by Step 1, $x_{i+1,1}=\liminf_{n\to \infty} x_{n,i+1}\geq x_{i,1}+\delta$ and $x_{i,1}\geq x_{i-1}+\delta$.

Let $[a,b]\subset (x_{i,1}, \min(x_{i,2},x_{i+1,1}))$. For $n$ large enough, we have also $[a,b]\subset (x_{n_k,i}, \min(x_{n_p,i},x_{n_k,i+1}))\subset  (x_{n_p,i-1}, x_{n_p,i})$ and hence, for all $x\in [a,b]$,
$$
u_{n_p}(x)>u_0>u_{n_k}(x).
$$
By the $L^1$ convergence of $(u_n)$ to $u$ we deduce that $u(x)=u_0$ for a.e. $x\in [a,b]$. 

Moreover, arguing in the same way, we easily prove that 
$u(x)\geq u_0$ a.e. on $(x_{i-1}, \min(x_{i+1,1}, x_{i,2}))$. By Proposition \ref{prop:Bvsol-description}, we then have that $u$ is concave on $(x_{i-1}, \min(x_{i+1,1}, x_{i,2}))$ and arguing as in Corollary \ref{cor:inter}, as $u(x)=u_0$ for a.e. $x\in [a,b]\subset (x_{i-1}, \min(x_{i+1,1}, x_{i,2}))$, we deduce that  $u=u_0$ on $(x_{i-1}, \min(x_{i+1,1}, x_{i,2}))$ which contradicts the existence of $\bar x\in [x_{i-1},   x_{i,2})$ such that $|u(\bar x)-u_0|\geq \bar \epsilon>0$.
\medbreak

\noindent{\it Step 3:  We have, for $ i=0,\dots,\ell $
\begin{equation}\label{eq:signs}
\begin{cases}
(-1)^{i+1}(u(x)-u_0) \geq 0 \text{ for every }x\in (x_{i},x_{i+1})  \text{ if }j\in \{1, \ldots, k\},
\\
(-1)^{i+1}(u(x)-u_0) \leq 0 \text{ for every }x\in (x_{i},x_{i+1}) \text{ if }j\in \{k+1, \ldots, 2k\}.
\end{cases}
\end{equation}
}
The argument is the same as in Step 2.
\medbreak

\noindent{\it Step 4:  Conclusion.}
We are now in the assumptions of Corollary \ref{cor:inter}.
Recalling that, by Proposition~\ref{prop:u-neq-1}, $u$ is not identically equal to $u_0$, we further infer that we have strict inequalities in \eqref{eq:signs}.
We have thus proved part b. of (I) (resp., (II)) of Theorem \ref{thm:main}. At this point, parts a. and c. directly follow from Proposition~\ref{prop:Bvsol-description}.

\section{Continuity of the energy - End of the proof of Theorem \ref{thm:main}}\label{sec:5}

As in the previous section, let $f'(u_0)>\lambda_{k+1}$ for some $k\in\mathbb N$ and fix any $j\in\{1,\dots,2k\}$. Consider the sequence $(u_{n,j})_n$ of solutions of \eqref{eq:Pn} given by Theorem~\ref{thm:Pn}.
For simplicity, we will denote this sequence by $(u_{n})$. 
Furthermore, we denote by $u$ the limit function of $(u_n)$ and by $x_1,\dots,x_{\ell}$ the (generalized or not) intersection points of $u$ with $u_0$. 

We recall that the energy of \eqref{eq:Pn} along $u_n$ is given by 
$$
E_n(x)=u_n'(x)\varphi_n(u_n'(x))-\Phi_n(u_n'(x))+a(x) F(u_n(x))\quad\mbox{for every }x\in[0,1]
$$
and that the  energy of \eqref{eq:Pmain} along $u$ is given by 
\begin{equation}\label{eq:energyPmain}
\mathcal E(x):=1-\frac{1}{\sqrt{1+(u'(x))^2}}+a(x)F(u(x)) \quad\mbox{for every }x\in D,
\end{equation}
where 
$D:=\{x\in[0,1]\,:\, u \text{ is continuous in } x\}$ and $F(u)=\int_{u_0}^u f(s) ds$. At the points of $D$ where $u$ has a vertical tangent, this definition has to be intended in the limit sense, i.e., $\mathcal E(x)=1+a(x)F(u(x))$.

The aim of this section is to prove that $\mathcal E$ can be continuously extended to the whole interval $[0,1]$. This will conclude the proof of Theorem \ref{thm:main}. We start with some preliminary results.

\begin{lemma}\label{lem:E_n-unif-to-E}
Up to a subsequence, $(E_n)$ converges uniformly in $[0,1]$. Consequently, denoted by $E$ the limit function,  $E$ is continuous in $[0,1]$. 
\end{lemma}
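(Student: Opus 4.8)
The plan is to apply the Arzel\`a--Ascoli theorem to the sequence $(E_n)$, so I need a uniform bound and equicontinuity on $[0,1]$. The crucial ingredient is an explicit formula for the derivative $E_n'$, obtained exactly as in the proof of Lemma~\ref{lem:uniqueness_Cauchy}. Differentiating
$$
E_n(x)=u_n'(x)\varphi_n(u_n'(x))-\Phi_n(u_n'(x))+a(x) F(u_n(x))
$$
and using $\Phi_n'=\varphi_n$, the terms $u_n''\varphi_n(u_n')$ and $-\varphi_n(u_n')u_n''$ cancel, leaving
$$
E_n'(x)=u_n'(x)\big[\varphi_n'(u_n'(x))u_n''(x)+a(x)f(u_n(x))\big]+a'(x)F(u_n(x)).
$$
Since $u_n$ solves the equation in \eqref{eq:Pn}, that is $\varphi_n'(u_n')u_n''=-a(x)f(u_n)$, the bracket vanishes identically and we are left with the clean identity $E_n'(x)=a'(x)F(u_n(x))$.

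With this in hand, equicontinuity is immediate. By Lemmas~\ref{lem:bdd-Linfty} and~\ref{lem:uprime-bdd-L1'}, the sequence $(u_n)$ is bounded in $L^\infty(0,1)$, say $\|u_n\|_{L^\infty}\le M$ for all $n$. Since $a\in C^1([0,1])$ and $F$ is continuous, we obtain the uniform Lipschitz estimate
$$
|E_n'(x)|=|a'(x)|\,|F(u_n(x))|\le \|a'\|_{L^\infty(0,1)}\max_{[0,M]}|F|=:C\qquad\text{for all }x\in[0,1],\ n\in\mathbb N,
$$
so that $(E_n)$ is uniformly Lipschitz, hence equicontinuous.

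For the uniform bound it suffices to control $(E_n)$ at a single point, and $x=0$ is the natural choice: since $u_n'(0)=0$, we have $E_n(0)=a(0)F(u_n(0))$, which is bounded uniformly in $n$ because $u_n(0)\in[0,M]$. Combining this with the Lipschitz bound yields $|E_n(x)|\le |E_n(0)|+C$ for every $x\in[0,1]$, so $(E_n)$ is uniformly bounded. Arzel\`a--Ascoli then provides a subsequence (which I extract from the one already fixed for the convergence $u_n\to u$) converging uniformly on $[0,1]$ to a limit $E$, necessarily continuous. I do not expect any serious obstacle here: the only subtlety is recognizing the cancellation that reduces $E_n'$ to $a'F(u_n)$, and then the whole argument rests on the already-established a priori $L^\infty$ bounds, which is precisely why this lemma comes after the estimates of Section~\ref{sec:4}.
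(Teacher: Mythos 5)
Your proof is correct and follows essentially the same route as the paper: both arguments rest on the identity $E_n'=a'F(u_n)$ (derived exactly as in Lemma~\ref{lem:uniqueness_Cauchy}), use it to get uniform boundedness and equicontinuity of $(E_n)$, and conclude by Arzel\`a--Ascoli. The only difference is technical rather than conceptual: the paper propagates the bound from $E_n(0)$ via the differential inequality $|E_n'|\le C E_n$ and Gronwall's Lemma, whereas you bound $|E_n'(x)|\le \|a'\|_{L^\infty(0,1)}\max_{[0,M]}|F|$ directly from the $L^\infty$ bound of Lemmas~\ref{lem:bdd-Linfty} and~\ref{lem:uprime-bdd-L1'}, which is equally valid and, given that those a priori estimates are already in place, slightly more elementary.
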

\begin{proof}
Arguing as in the proof of Lemma \ref{lem:uniqueness_Cauchy}, we get for every $n\in \mathbb N$
$$
|E_n'(x)|\le C E_n(x)\quad\mbox{for every }x\in [0,1],
$$ 
with $C=\max_{[0,1]}|a'(x)|/\min_{[0,1]}a(x)$, and so by Gronwall's Lemma
$$
E_n(x)\le e^CE_n(0)\le e^Ca(0)\max_{[0,\sup_n\|u_n\|_{L^\infty}]} F \quad\mbox{for every }x\in [0,1].
$$
Therefore, by the Arzel\`a-Ascoli Theorem, up to a subsequence $(E_n)$ converges uniformly in $[0,1]$ to some function $E$. The continuity of $E$ then follows by the continuity of $E_n$ for every $n$.
\end{proof}

\begin{lemma}\label{lem:v<1}
Let $v$ be the function defined in Proposition~\ref{prop:vn-conv-unif}. We have $|v(x)|\le 1$ for every $x\in [0,1]$ and, for $\bar x\in[0,1]\setminus\{x_1,\dots,x_{\ell}\}$,  $|v(\bar x)|<1$. 
\end{lemma}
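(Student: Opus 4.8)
The plan is to prove the two assertions separately: the global bound $|v|\le 1$ via a uniform energy estimate, and the strict bound off the intersection points via an explicit integral representation of $v$.

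For the bound $|v|\le 1$, I would first record that, arguing with Gronwall's Lemma exactly as in Lemma~\ref{lem:E_n-unif-to-E}, the energies satisfy a uniform bound $E_n(x)\le M$ for all $x\in[0,1]$ and all $n$. Since $a(x)F(u_n(x))\ge 0$ by \eqref{eq:Fprop}, this yields $\Psi_n(u_n'(x))\le M$, where I set $\Psi_n(s):=s\varphi_n(s)-\Phi_n(s)\ge 0$ (cf. Proposition~\ref{prop:phin-Phin}-(d)). Suppose now, for contradiction, that $v(\bar x)>1$ at some point $\bar x$ (the case $v(\bar x)<-1$ being symmetric), say $v(\bar x)=1+2\eta$ with $\eta>0$ fixed. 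By the uniform convergence $v_n\to v$ of Proposition~\ref{prop:vn-conv-unif}, for $n$ large one has $v_n(\bar x)=\varphi_n(u_n'(\bar x))\ge 1+\eta$; since $|\varphi|<1$ and $\varphi_n=\varphi$ on $[-n,n]$, this forces $u_n'(\bar x)>n$ and, using that $\varphi_n$ is affine with slope $\varphi'(n)=(1+n^2)^{-3/2}$ beyond $n$, that $u_n'(\bar x)-n\ge \eta(1+n^2)^{3/2}$. The key computation is $\Psi_n'(s)=s\varphi_n'(s)$, so that for $s>n$
\[
\Psi_n(s)=\Psi_n(n)+\varphi'(n)\frac{s^2-n^2}{2},\qquad \Psi_n(n)=1-\frac{1}{\sqrt{1+n^2}}\le 1.
\]
Evaluating at $s=u_n'(\bar x)$ gives $\Psi_n(u_n'(\bar x))\ge \tfrac{1}{2}\eta^2(1+n^2)^{3/2}-o(1)\to+\infty$, contradicting $\Psi_n(u_n'(\bar x))\le M$. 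Hence $|v(x)|\le 1$ for every $x\in[0,1]$.

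For the strict inequality, the plan is to identify $v$ explicitly. Since $v_n(0)=\varphi_n(u_n'(0))=0$ and $v_n'=-a\hat f(u_n)=-af(u_n)$ (recall $u_n>0$), integration gives $v_n(x)=-\int_0^x a(t)f(u_n(t))\,dt$. Because $(u_n)$ is bounded in $L^\infty$ (Lemmas~\ref{lem:bdd-Linfty} and \ref{lem:uprime-bdd-L1'}) and $u_n\to u$ in $L^1$, the Lipschitz continuity of $f$ on the relevant bounded interval gives $f(u_n)\to f(u)$ in $L^1(0,1)$; passing to the limit (uniformly in $x$) and using Proposition~\ref{prop:vn-conv-unif}, I obtain
\[
v(x)=-\int_0^x a(t)f(u(t))\,dt\qquad\text{for every }x\in[0,1].
\]
In particular $v$ is absolutely continuous with $v'=-af(u)$ a.e., while $v(0)=0$ and $v(1)=0$ (the latter by Proposition~\ref{prop:Bvsol-description}-(i)).

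Finally, fix $\bar x\in[0,1]\setminus\{x_1,\dots,x_\ell\}$. If $\bar x\in\{0,1\}$ then $v(\bar x)=0$. Otherwise $\bar x$ lies in the interior of some interval $(x_i,x_{i+1})$ (with the convention $x_0=0$, $x_{\ell+1}=1$) on which, by part (b) of (I)--(II) of Theorem~\ref{thm:main} (already established in Section~\ref{sec:4}) together with Proposition~\ref{prop:Bvsol-description}-(ii), $u-u_0$ is continuous and of strict constant sign. Hence $f(u)$ has strict constant sign there, so $v'=-af(u)$ has strict constant sign and $v$ is strictly monotone on $(x_i,x_{i+1})$. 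Combined with the continuity of $v$ on $[0,1]$, this forces $v(\bar x)$ to lie strictly between the endpoint values $v(x_i)$ and $v(x_{i+1})$, both of which belong to $[-1,1]$ by the first part; therefore $|v(\bar x)|<1$. The main obstacle is the first part: one must extract a quantitative blow-up of the energy from the affine tails of $\varphi_n$ and $\Phi_n$, which is precisely where the constants $\varphi'(n)$ and $\Psi_n(n)$ enter.
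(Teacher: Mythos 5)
Your first part is correct, but it follows a different route from the paper's: the paper simply notes that if $v(\bar x)>1$, then $v_n>1$ on a whole neighborhood $[\bar x-\delta,\bar x+\delta]$ for $n$ large, hence $u_n'=\varphi_n^{-1}(v_n)\ge n$ there and $\int_0^1|u_n'|\,dx\ge 2n\delta\to\infty$, contradicting the $L^1$-bound of Lemmas \ref{lem:bdd-Linfty} and \ref{lem:uprime-bdd-L1'}. Your alternative, based on the uniform energy bound and the quantitative blow-up of $\Psi_n(s)=s\varphi_n(s)-\Phi_n(s)$ along the affine tails, is also valid (the computations $\Psi_n'(s)=s\varphi_n'(s)$, $\Psi_n(n)=1-(1+n^2)^{-1/2}$ and the lower bound $u_n'(\bar x)-n\ge\eta(1+n^2)^{3/2}$ all check out); it is more computational but needs only the pointwise value $v_n(\bar x)$, whereas the paper's argument needs the $L^1$-bound on $(u_n')$.

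The second part, however, has a genuine gap at the step ``$u-u_0$ is of strict constant sign, hence $f(u)$ has strict constant sign''. On intervals $(x_i,x_{i+1})$ where $u<u_0$, assumption $(f_{\mathrm{sgn}})$ gives $f(u(x))<0$ only where $u(x)>0$, and $f(0)=0$. Positivity of the limit function $u$ has not been established at this stage: the definition of BV-solution does not encode $u>0$, Section \ref{sec:4} only proves the sign of $u-u_0$, and the $L^1$-limit of the positive functions $u_n$ is a priori only nonnegative. If $u$ vanished on a subinterval, then $v'=-af(u)=0$ there, $v$ would be only non-strictly monotone, and your final conclusion that $v(\bar x)$ lies \emph{strictly} between the endpoint values fails exactly in the scenario you must exclude, namely $v\equiv\pm1$ on a subinterval. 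The gap is fixable, but it requires a genuine extra argument: on such an interval $u$ is convex, of class $C^2$, and solves the equation classically by Proposition \ref{prop:Bvsol-description}-(ii), so $u(x^*)=0$ at an interior point would force $u'(x^*)=0$ and then, by uniqueness of the Cauchy problem as in Lemma \ref{lem:max-princ}, $u\equiv0$ on the whole interval; this in turn contradicts Proposition \ref{prop:Bvsol-description}-(iii) at the adjacent intersection point, where either continuity (impossible, since $0\neq u_0$) or an infinite one-sided derivative (impossible, since $u\equiv 0$ has zero derivative) is required. Note that the paper's proof of this step avoids the issue altogether by arguing at the level of the approximating solutions $u_n$: on a fixed compact subinterval of $(x_i,x_{i+1})$ the $u_n$ are eventually all convex or all concave, so $u_n'$ is monotone there, unboundedness of $u_n'(\bar x)$ propagates to a whole subinterval, and Fatou's Lemma contradicts the $L^1$-bound on $(u_n')$.
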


\begin{proof}

\noindent{\it Step 1: $|v(x)|\le 1$ for every $x\in [0,1]$.}
\medbreak

\noindent
For instance, suppose by contradiction that $v(\bar x)>1$ for some $\bar x\in [0,1]$, then for $n$ large, $v_n(x)>1$ in $[\bar x-\delta, \bar x+\delta]$ for some $\delta>0$. This gives the contradiction  with  Lemmas \ref{lem:bdd-Linfty} and \ref{lem:uprime-bdd-L1'}
$$
\int_{\bar x-\delta}^{\bar x+\delta} u_n'(\xi)d\xi=\int_{\bar x-\delta}^{\bar x+\delta} \varphi_n^{-1}(v_n(\xi))d\xi\ge 2n\delta\to\infty\quad\mbox{as }n\to\infty.
$$
 
\noindent{\it Step 2: For $\bar x\in[0,1]\setminus\{x_1,\dots,x_{\ell}\}$,  $|v(\bar x)|<1$.}
\medbreak

\noindent
If $\bar x\in\{0,\,1\}$, $u'_n(\bar x)=0$ for every $n$, and the thesis is clearly verified. 
Let us consider the case $\bar x\in(0,1)\setminus\{x_1,\dots,x_{\ell}\}$. Suppose by contradiction that $|v(\bar x)|\ge 1$, then $(|u_n'(\bar x)|)$ is unbounded and so, up to a subsequence, $|u_n'(\bar x)|\to\infty$ as $n\to\infty$. Put $x_0:=0$ and $x_{\ell+1}:=1$, and let $i\in\{0,\dots,\ell\}$ be the integer such that $\bar x\in(x_i,x_{i+1})$. Fix $\varepsilon>0$ such that $\varepsilon<\min\{\bar x-x_i,x_{i+1}-\bar x\}$. Using the same notation as in the proof of Theorem \ref{thm:main}, since $x_{n,i}\to x_i$ and $x_{n,i+1}\to x_{i+1}$ as $n\to\infty$, for $n$ large,  
$\max\{|x_{n,i}-x_i|,|x_{n,i+1}-x_{i+1}|\}\le \varepsilon$. By the equation in \eqref{eq:Pn}, we know that for $n$ large, the functions $u_n$ are either all convex or all concave in the whole interval 
$[x_i+\varepsilon, x_{i+1}-\varepsilon]$.
Suppose, to fix the ideas, that  $u_n$ are  all convex  on $[x_i+\varepsilon, x_{i+1}-\varepsilon]$. Then,  for $n$ large, the following inequalities hold
$$
\begin{aligned}
u_n'(\bar x)&\le u_n'(x)\le u_n'(x_{i+1}-\varepsilon)\quad	&\mbox{if }x\in [\bar x, x_{i+1}-\varepsilon],\\
u_n'(\bar x)&\ge u_n'(x)\ge u_n'(x_i+\varepsilon)\quad	&\mbox{if }x\in [x_i+\varepsilon,\bar x].
\end{aligned}
$$
If $u_n'(\bar x)\to+\infty$, by Fatou's Lemma, 
$$
\begin{aligned}
\liminf_{n\to\infty}\int_{\bar x}^{x_{i+1}-\varepsilon}u_n'(x)dx&\ge \int_{\bar x}^{x_{i+1}-\varepsilon}\liminf_{n\to\infty} u_n'(x)dx\\
& \ge \int_{\bar x}^{x_{i+1}-\varepsilon}\liminf_{n\to\infty} u_n'(\bar x)dx=+\infty.
\end{aligned}
$$
This contradicts the fact that $(u_n')$ is bounded in $L^1(0,1)$. 
If $u_n'(\bar x)\to-\infty$, applying again Fatou's Lemma, we have
$$
\liminf_{n\to\infty}\int_{x_i+\varepsilon}^{\bar x}|u_n'(x)|dx\ge \int_{x_i+\varepsilon}^{\bar x}\liminf_{n\to\infty} |u_n'(x)|dx \ge \int_{x_i+\varepsilon}^{\bar x}\liminf_{n\to\infty} |u_n'(\bar x)|dx=+\infty.
$$
This yields again a contradiction and concludes the proof in this case. In case   $u_n$ are  all concave, the proof is analogous and we omit it.
\end{proof}

\begin{lemma}\label{lem:u'ntou'}
Let $[\alpha,\beta]\subset [0,1]\setminus\{x_1,\dots,x_{\ell}\}$, with $0\le \alpha<\beta\le 1$. Then, up to a subsequence, $(u_n)$ converges to $u$ in $C^1([\alpha,\beta])$.
\end{lemma}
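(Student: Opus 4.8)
The plan is to leverage the uniform convergence $v_n\to v$ already obtained in Proposition~\ref{prop:vn-conv-unif}, together with the fact that on an interval $[\alpha,\beta]$ avoiding the (generalized) intersection points the limit $v$ stays strictly inside $(-1,1)$; there the truncation $\varphi_n$ coincides with $\varphi$ along the relevant range, so that $u_n'=\varphi^{-1}(v_n)$ and the uniform control of $v_n$ transfers directly to $u_n'$.

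First I would note that, since $[\alpha,\beta]\subset[0,1]\setminus\{x_1,\dots,x_\ell\}$, Lemma~\ref{lem:v<1} gives $|v(x)|<1$ for every $x\in[\alpha,\beta]$. Because $v$ is continuous (Proposition~\ref{prop:vn-conv-unif}) and $[\alpha,\beta]$ is compact, there exists $\varepsilon>0$ such that $|v|\le 1-\varepsilon$ on $[\alpha,\beta]$. This is precisely the hypothesis of Proposition~\ref{prop:un-conv-unif}, whence, up to a subsequence, $u_n\to u$ uniformly on $[\alpha,\beta]$ and $u\in C([\alpha,\beta])$.

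The remaining step is to upgrade this to $C^1$ convergence. By the uniform convergence $v_n\to v$, for $n$ large one has $|v_n|\le 1-\varepsilon'$ on $[\alpha,\beta]$ for some $\varepsilon'\in(0,\varepsilon)$. Since $\varphi_n=\varphi$ on $[-n,n]$, and thus $\varphi_n^{-1}=\varphi^{-1}$ on $[-\varphi(n),\varphi(n)]$, and since $\varphi(n)\to 1$, for $n$ large we have $1-\varepsilon'<\varphi(n)$ and therefore $u_n'=\varphi_n^{-1}(v_n)=\varphi^{-1}(v_n)$ on $[\alpha,\beta]$. Now $\varphi^{-1}$ is of class $C^1$ (indeed smooth) on the compact set $[-(1-\varepsilon'),1-\varepsilon']\subset(-1,1)$, hence uniformly continuous there; combined with the uniform convergence $v_n\to v$, this yields $u_n'=\varphi^{-1}(v_n)\to\varphi^{-1}(v)$ uniformly on $[\alpha,\beta]$. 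Having uniform convergence of both $(u_n)$ and $(u_n')$, the classical theorem on differentiation of uniformly convergent sequences gives $u\in C^1([\alpha,\beta])$ with $u'=\varphi^{-1}(v)$ and $u_n\to u$ in $C^1([\alpha,\beta])$, as claimed.

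I do not expect a genuine analytic obstacle here: the substantive work was front-loaded into Propositions~\ref{prop:vn-conv-unif} and~\ref{prop:un-conv-unif} and Lemma~\ref{lem:v<1}. The only points requiring care are bookkeeping ones: the uniform convergence of $v_n$ holds only along a subsequence and Proposition~\ref{prop:un-conv-unif} extracts a further subsequence, so one should fix a single subsequence at the outset and check that all subsequent extractions are compatible. The identification $\varphi_n^{-1}(v_n)=\varphi^{-1}(v_n)$ for large $n$ is the technical heart of the argument, but it merely repeats the reasoning already employed in the proof of Proposition~\ref{prop:un-conv-unif}.
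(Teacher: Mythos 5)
Your proof is correct, but the mechanism you use for the key step differs from the paper's. The paper also starts from Lemma~\ref{lem:v<1} and the bound $|v|\le 1-\varepsilon$ on $[\alpha,\beta]$, and also invokes Proposition~\ref{prop:un-conv-unif} for the uniform convergence of $(u_n)$; but to get uniform convergence of the derivatives it goes back to the differential equation in \eqref{eq:Pn}: it bounds $|u_n''|$ on $[\alpha,\beta]$ by $\|a\|_{L^\infty}\max|f(u_n)|/|\varphi'(\varphi^{-1}(1-\varepsilon'))|$ (using that $\varphi_n'(u_n')=\varphi'(u_n')$ there and is bounded away from zero), applies Arzel\`a--Ascoli to $(u_n')$ to extract a uniformly convergent subsequence with some limit $w$, and only then identifies $w=u'$ via the uniform convergence $u_n\to u$. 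You instead never touch the equation: from $|v_n|\le 1-\varepsilon'$ for $n$ large you write $u_n'=\varphi_n^{-1}(v_n)=\varphi^{-1}(v_n)$ on $[\alpha,\beta]$ and push the uniform convergence $v_n\to v$ through $\varphi^{-1}$, which is uniformly continuous on $[-(1-\varepsilon'),1-\varepsilon']$, obtaining directly $u_n'\to\varphi^{-1}(v)$ uniformly. Your route is more elementary (no second-derivative estimate, no additional compactness extraction for $(u_n')$) and yields the extra identification $u'=\varphi^{-1}(v)$ on $[\alpha,\beta]$, which ties the limit derivative to the limit function $v$ of Proposition~\ref{prop:vn-conv-unif}; the paper's route, at the cost of using the equation, produces in passing a uniform $C^2$-type bound on the approximating solutions, which is information your argument does not give (though it is not needed for the statement). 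Your bookkeeping remarks about fixing one subsequence at the outset are appropriate and match how the paper handles the same issue.
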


\begin{proof}
By Lemma \ref{lem:v<1} and by the continuity of $v$ (see Proposition \ref{prop:vn-conv-unif}), we know that there exists $\varepsilon>0$ such that $|v(x)|\le 1-\varepsilon$ for every $x\in [\alpha,\beta]$. By the equation in \eqref{eq:Pn}, 
\begin{equation}\label{eq:est-for-u_n''}
|u_n''(x)|\le \frac{\|a\|_{L^\infty(0,1)}|f(u_n(x))|}{|\varphi_n'(u_n'(x))|}\quad\mbox{for every }x\in [0,1].  
\end{equation}
Proceeding as in the proof of Proposition \ref{prop:un-conv-unif}, we have for $n$ large 
\begin{equation}\label{eq:est-u_n'}
|u_n'(x)|\le \varphi^{-1}(1-\varepsilon')\quad\mbox{for every }x\in [\alpha,\beta]\mbox{ and for some } \varepsilon'>0.
\end{equation}
Therefore, for $n$ large $\varphi_n'(u_n')=\varphi'(u_n')$ in $[\alpha,\beta]$, and we obtain
$$
|\varphi'(u_n'(x))|\ge|\varphi'(\varphi^{-1}(1-\varepsilon')| >0\quad\mbox{for every }x\in[\alpha,\beta].
$$
On the other hand, by Lemmas \ref{lem:bdd-Linfty} and \ref{lem:uprime-bdd-L1'}, we know that $\sup_n\|u_n\|_{L^\infty(0,1)}<\infty$.
Hence, from \eqref{eq:est-for-u_n''} we get for $n$ large and for every $x\in [\alpha,\beta]$,
\begin{equation}\label{eq:est-u_n''}
|u_n''(x)|\le \frac{\|a\|_{L^\infty(0,1)}\max_{s\in[0,\sup_n\|u_n\|_{L^\infty}]}|f(s)|}{|\varphi'(\varphi^{-1}(1-\varepsilon'))|}. 
\end{equation}
Combining together \eqref{eq:est-u_n'} and \eqref{eq:est-u_n''}, we can apply the Arzel\`a-Ascoli theorem and conclude that, up to a subsequence, $(u_n')$ converges uniformly to some function $w$ in $[\alpha,\beta]$. On the other hand, by Proposition \ref{prop:un-conv-unif}, we know that $(u_n)$ converges uniformly to $u$ in $[\alpha,\beta]$. Thus $w=u'$ in $[\alpha,\beta]$.
\end{proof}

\begin{theorem}\label{thm:cont-en}
$E(x)=\mathcal E(x)$ for every $x\in [0,1]\setminus\{x_1,\dots,x_j\}$.
In particular, $\mathcal E$ can be extended by continuity as $E(x)$ at every point $x\in[0,1]\setminus D$.
\end{theorem}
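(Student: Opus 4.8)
The plan is to prove the pointwise identity $E(\bar x)=\mathcal E(\bar x)$ at each fixed $\bar x\in[0,1]\setminus\{x_1,\dots,x_\ell\}$ by passing to the limit in the explicit formula for $E_n(\bar x)$, taking advantage of the fact that, away from the (generalized) intersection points, the convergence $u_n\to u$ upgrades to $C^1$. First I would fix such a $\bar x$ and choose a compact interval $[\alpha,\beta]\subseteq[0,1]\setminus\{x_1,\dots,x_\ell\}$ with $\bar x\in[\alpha,\beta]$; this is always possible since the excluded set is finite (if $\bar x\in\{0,1\}$ one simply takes a one-sided interval). By Lemma~\ref{lem:u'ntou'}, along a subsequence $u_n\to u$ in $C^1([\alpha,\beta])$, so in particular $u_n(\bar x)\to u(\bar x)$ and $u_n'(\bar x)\to u'(\bar x)$, the latter limit being finite because, by Parts (I)--(II) of Theorem~\ref{thm:main}, $u$ is of class $C^2$ near $\bar x$ (and $\bar x\in D$, so $\mathcal E(\bar x)$ is genuinely defined).

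The central computation is then to let $n\to\infty$ in
\[
E_n(\bar x)=u_n'(\bar x)\varphi_n(u_n'(\bar x))-\Phi_n(u_n'(\bar x))+a(\bar x)F(u_n(\bar x)).
\]
Since $(u_n'(\bar x))$ is bounded, for $n$ large enough one has $|u_n'(\bar x)|\le n$, whence $\varphi_n(u_n'(\bar x))=\varphi(u_n'(\bar x))$ and $\Phi_n(u_n'(\bar x))=\Phi(u_n'(\bar x))$ directly from the definitions of $\varphi_n$ and $\Phi_n$. Using the continuity of $\varphi$, $\Phi$, $F$ together with Lemma~\ref{lem:E_n-unif-to-E} (which fixes the value $E_n(\bar x)\to E(\bar x)$), I obtain
\[
E(\bar x)=u'(\bar x)\varphi(u'(\bar x))-\Phi(u'(\bar x))+a(\bar x)F(u(\bar x)).
\]
It then remains only to invoke the elementary identity $s\varphi(s)-\Phi(s)=1-1/\sqrt{1+s^2}$, immediate from $\varphi(s)=s/\sqrt{1+s^2}$ and $\Phi(s)=\sqrt{1+s^2}-1$, which turns the right-hand side into precisely $\mathcal E(\bar x)$, establishing the asserted equality.

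For the final ``in particular'' clause, I would recall that $E$ is continuous on all of $[0,1]$ by Lemma~\ref{lem:E_n-unif-to-E}, while the identity just proved gives $E\equiv\mathcal E$ on the cofinite, hence dense, set $[0,1]\setminus\{x_1,\dots,x_\ell\}\supseteq[0,1]\setminus D$. Therefore $E$ is the (unique) continuous extension of $\mathcal E$, and in particular it supplies the value $E(x)$ at each point $x\in[0,1]\setminus D$ where $\mathcal E$ was a priori undefined.

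The main obstacle is the bookkeeping of subsequences, since the limit $u$, the limit $E$, and the $C^1$-convergence of Lemma~\ref{lem:u'ntou'} are each obtained only up to extraction. I would handle this by fixing once and for all the subsequence along which both $u_n\to u$ (as in Section~\ref{sec:4}) and $E_n\to E$ uniformly (Lemma~\ref{lem:E_n-unif-to-E}) hold, and only then extracting the further $C^1$-subsequence from Lemma~\ref{lem:u'ntou'}; because $E_n(\bar x)\to E(\bar x)$ is already pinned down, computing this same limit along the $C^1$-subsequence forces the value to be $\mathcal E(\bar x)$. Apart from this, the only point requiring care is ensuring $u'(\bar x)$ is finite and $\bar x$ is a continuity point of $u$, both guaranteed by the regularity in Parts (I)--(II).
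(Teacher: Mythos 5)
Your proof is correct and takes essentially the same route as the paper's: both rest on the $C^1$-convergence of Lemma~\ref{lem:u'ntou'} away from the intersection points together with the uniform convergence $E_n\to E$ of Lemma~\ref{lem:E_n-unif-to-E}, and then identify the limit of $E_n(\bar x)$ with $\mathcal E(\bar x)$ using $s\varphi(s)-\Phi(s)=1-1/\sqrt{1+s^2}$ (the paper argues uniformly on a small interval rather than pointwise, and treats $\bar x\in\{0,1\}$ separately, but these are inessential variants). Only note a slip in your final paragraph: the correct inclusion is $[0,1]\setminus D\subseteq\{x_1,\dots,x_\ell\}$, equivalently $[0,1]\setminus\{x_1,\dots,x_\ell\}\subseteq D$, not the one you wrote; this does not affect the argument, since all you need is that $E$ is continuous and agrees with $\mathcal E$ on the dense set $[0,1]\setminus\{x_1,\dots,x_\ell\}$.
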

\begin{proof}
Let $\bar x\in [0,1]\setminus\{x_1,\dots,x_{\ell}\}$. If $\bar x\in\{0,1\}$ the thesis is verified, since $u_n(0)\to u(0)$ and $u_n(1)\to u(1)$, cf. the proof of Theorem \ref{thm:main}- Step 1 (1.a) and (1.c). Otherwise, let $\delta>0$ be such that $[\bar x-\delta,\bar x+\delta]\subset [0,1]\setminus\{x_1,\dots,x_j\}$. By Lemma~\ref{lem:u'ntou'}, 
$$
a(\cdot)F(u_n)\to a(\cdot)F(u)\quad\mbox{uniformly in }[\bar x-\delta,\bar x+\delta]
$$ 
and
$$
u_n'\varphi_n(u_n')-\Phi_n(u_n')\to 1-\frac{1}{\sqrt{1-(u')^2}}\quad\mbox{uniformly in }[\bar x-\delta,\bar x+\delta].
$$ 
Therefore, by Lemma \ref{lem:E_n-unif-to-E} and the uniqueness of the limit,
$$
E(x)= 1-\frac{1}{\sqrt{1-(u'(x))^2}}+a(x)F(u(x))\quad\mbox{for every }x\in [\bar x-\delta,\bar x+\delta].
$$
\end{proof}

\section{Existence of classical solutions}\label{sec:6}

In this section, we give a result ensuring that the solutions of \eqref{eq:Pmain} found by the previous approximation procedure are actually classical solutions. The precise statement is the following.

\begin{theorem}\label{thm:classical}
Let $k\in \mathbb N$, $a\in C^1([0,1])$, $a>0$ in $[0,1]$ and assume that $f\in C^1([0,\infty))$ satisfy $(f_\mathrm{eq})$, $(f_\mathrm{sgn})$ and $f'(u_0)>\lambda_{k+1}$.

If moreover  
\begin{equation}
\label{eq:f-cl-n}
a(0)\exp(\int_0^1 \frac{a'^+(x)}{a(x)} dx) \int_{u_0}^0 f(s)ds <1
\end{equation}
then, there exist at least $k$ non-constant $C^2$-solutions $u_1$, ... , $u_k$ of \eqref{eq:Pmain}, having the properties stated in Theorem \ref{thm:main}-(I).

On the other hand, if  
\begin{equation}
\label{eq:f-cl-n'}
a(0)\exp(\int_0^1 \frac{a'^+(x)}{a(x)} dx) \int_{u_0}^{+\infty} f(s)ds <1
\end{equation}
then, there exist at least $k$ non-constant $C^2$-solutions $u_{k+1}$, ... , $u_{2k}$ of \eqref{eq:Pmain}, having the properties stated in Theorem \ref{thm:main}-(II).
\end{theorem}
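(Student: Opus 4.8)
The guiding principle is that a BV-solution produced by our approximation scheme fails to be classical exactly when a vertical tangent develops, and a vertical tangent can appear only where the limiting energy reaches the threshold value $1$; hence the whole point is to show that conditions \eqref{eq:f-cl-n} and \eqref{eq:f-cl-n'} keep the energy strictly below $1$. The plan is therefore to rerun the shooting construction of Theorem~\ref{thm:Pn} (which requires only $(f_\mathrm{eq})$, $(f_\mathrm{sgn})$ and $f'(u_0)>\lambda_{k+1}$, and in particular no a priori assumption $(f_\mathrm{ap})$ or $(f_\mathrm{ap})'$), obtaining for each $j$ the approximating sequence $(u_n)=(u_{n,j})$, and then to bound its energy $E_n$ from \eqref{eq:energy-un} above by a constant $<1$, uniformly in $n$ and $x$.

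First I would recall the energy identity $E_n'(x)=a'(x)F(u_n(x))$, which follows by differentiating \eqref{eq:energy-un} and using the equation in \eqref{eq:Pn>0}; since $F\ge 0$ this yields $E_n'(x)\le \frac{{a'}^+(x)}{a(x)}E_n(x)$, exactly as in the proof of Lemma~\ref{lem:bdd-Linfty}. Anchoring Gronwall's inequality at $x=0$, where $u_n'(0)=0$ gives $E_n(0)=a(0)F(u_n(0))$, I would then exploit the sign of the shooting datum. For the solutions of type (I) one has $0<u_n(0)<u_0$, so $F(u_n(0))<\int_{u_0}^0 f$, and
$$
E_n(x)\le a(0)F(u_n(0))\exp\Big(\int_0^1 \tfrac{{a'}^+(s)}{a(s)}ds\Big)\le a(0)\Big(\int_{u_0}^0 f\Big)\exp\Big(\int_0^1 \tfrac{{a'}^+(s)}{a(s)}ds\Big)=:C_0,
$$
which is $<1$ precisely by \eqref{eq:f-cl-n}. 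For the solutions of type (II) one has $u_n(0)>u_0$, hence $F(u_n(0))<\int_{u_0}^{+\infty}f$, and the same computation bounds $E_n$ by the left-hand side of \eqref{eq:f-cl-n'}, again $<1$.

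It then remains to convert the uniform bound $E_n\le C_0<1$ into a uniform gradient bound. Writing $g_n(s):=s\varphi_n(s)-\Phi_n(s)$, one has $g_n\ge 0$, $g_n(s)=1-\tfrac{1}{\sqrt{1+s^2}}$ for $|s|\le n$, and $g_n$ is even and increasing in $|s|$ with $g_n(s)\to+\infty$ (Proposition~\ref{prop:phin-Phin}-(d)); since $aF(u_n)\ge 0$, the inequality $g_n(u_n'(x))\le E_n(x)\le C_0$ forces, for every $n$ large enough that $g_n(n)>C_0$, the estimate $|u_n'(x)|\le K:=\sqrt{(1-C_0)^{-2}-1}$ uniformly in $x$. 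This uniform $C^1$ bound simultaneously supplies the $W^{1,1}$ compactness needed to pass to the limit (so that $(f_\mathrm{ap})$ and $(f_\mathrm{ap})'$ can indeed be dispensed with) and guarantees $|v|\le \varphi(K)<1$ on all of $[0,1]$; consequently no generalized intersection point can arise, and Proposition~\ref{prop:un-conv-unif} together with Lemma~\ref{lem:u'ntou'} applies on the whole interval to give $u_n\to u$ in $C^1([0,1])$. The limit $u$ is therefore classical: $u\in C^1$, and $u\in C^2$ follows from the equation since $\varphi$ is a $C^1$-diffeomorphism where $|u'|$ is bounded. The qualitative and oscillatory description is inherited verbatim from Theorem~\ref{thm:main}-(I) (resp. (II)), with every occurrence of case (c)-(ii) now excluded. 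The hard part is recognizing that \eqref{eq:f-cl-n} and \eqref{eq:f-cl-n'} are exactly the thresholds keeping the energy below $1$; once Gronwall's estimate is correctly anchored at $x=0$ with the one-sided weight ${a'}^+$, the passage to a classical solution is essentially forced.
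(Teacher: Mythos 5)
Your proposal is correct, and its core estimate is exactly the paper's: the energy identity $E_n'=a'F(u_n)$, the one-sided Gronwall bound $E_n(x)\le E_n(0)\exp\bigl(\int_0^1 \frac{{a'}^+}{a}\,ds\bigr)$ anchored at $x=0$ where $E_n(0)=a(0)F(u_n(0))$, the observation that $0<u_n(0)<u_0$ (resp. $u_n(0)>u_0$) together with the monotonicity of $F$ makes \eqref{eq:f-cl-n} (resp. \eqref{eq:f-cl-n'}) yield $E_n\le C_0<1$ uniformly, and the conversion into a gradient bound through $K_n(s)=s\varphi_n(s)-\Phi_n(s)$. Where you diverge is the endgame. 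The paper stops one step earlier and never passes to the limit: once $|u_{n,i}'|\le K$ with $K$ independent of $n$, it takes $n>K$ and notes that $\varphi_n=\varphi$ on $[-n,n]$, so the approximating solution $u_{n,i}$ \emph{itself} is already a $C^2$ solution of \eqref{eq:Pmain}, inheriting its oscillation count directly from Theorem \ref{thm:Pn}; no compactness, no limit function, no appeal to Sections \ref{sec:4}--\ref{sec:5}. You instead carry the uniform $C^1$ bound through the whole limiting machinery, which does work but costs you extra justification: you must re-run the convergence arguments without $(f_\mathrm{ap})$/$(f_\mathrm{ap})'$ (legitimate, since your bound replaces Lemmas \ref{lem:bdd-Linfty} and \ref{lem:uprime-bdd-L1'}), and your invocation of Lemma \ref{lem:u'ntou'} ``on the whole interval'' exceeds its literal hypotheses (it is stated for compact sets avoiding \emph{all} intersection points $x_1,\dots,x_\ell$, not only generalized ones); what saves you is that its proof only uses the bound $|v|\le 1-\varepsilon$, which in your setting holds globally, so the argument, not the statement, is what you should cite. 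In short: same key idea, but the paper's route shows the shortcut that the uniform gradient bound makes the limit procedure unnecessary, while your route trades that simplicity for a self-contained (and slightly heavier) compactness argument.
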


\begin{proof} Let us consider the first case, the second one is similar.
\medbreak

We are going to show that, for $n$ sufficiently large,
it holds that
\begin{equation}\label{eq:uclass1}
\vert u_{n,i}'(x) \vert \leq n,\quad\mbox{for all }x\in [0,1].
\end{equation}
Since $\varphi_n(s) = \varphi(s)$ for $\vert s \vert \leq n$, this implies that, for $n$ large enough, the $C^2$-function $u_{n,i}$ is a solution of \eqref{eq:Pmain},
thus concluding the proof.

In order to prove \eqref{eq:uclass1}, we first argue similarly as in the proof of Lemma \ref{lem:uniqueness_Cauchy} to find the estimate
$$
E_n'(x) \leq  \frac{{a'}^+(x)}{a(x)} E_n(x), \quad\mbox{for all }x\in (0,1),
$$
which in turn implies that
$$
E_n(x) \leq E_n(0) \exp\big(\int_0^x  \frac{{a'}^+(s)}{a(s)}ds\big),\quad\mbox{for all }x\in (0,1).
$$
From this, setting $K_n(s) := s \varphi_n(s) - \Phi_n(s)$, we obtain
$$
\begin{aligned}
K_n(u_{n,i}'(x)) & \le E_n(0)  \exp\big(\int_0^x \frac{{a'}^+(s)}{a(s)} ds\big) - a(x) F(u_{n,i}(x)) \\
& =  a(0) F(d_{n,i})  \exp\big(\int_0^x \frac{{a'}^+(s)}{a(s)} ds\big) - a(x) F(u_{n,i}(x)), 
\end{aligned}
$$
where we have denoted $d_{n,i}=u_{n,i}(0)$.
Thus, recalling \eqref{eq:Fprop}, we infer that
$$
K_n(u_{n,i}'(x)) \leq a(0)  \exp\big(\int_0^x  \frac{{a'}^+(s)}{a(s)} ds\big) F(d_{n,i}),\quad\mbox{for all }x\in (0,1),
$$
and finally, by assumption \eqref{eq:f-cl-n} and by \eqref{eq:Fprop2},
\begin{equation}\label{eq:uclass2}
K_n(u_{n,i}'(x)) \leq 1-\eta,\quad\mbox{for all }x\in (0,1),
\end{equation}
where $\eta = 1 - a(0) \exp\big(\int_0^1  \frac{{a'}^+(x)}{a(x)}dx\big) F(0)>0$.

On the other hand, since the function $K_n$ is increasing for $s \geq 0$ and decreasing for $s \leq 0$, 
a simple computation yields
\begin{equation}\label{eq:uclass3}
K_n(s) \geq K_n(n) = 1 - \frac{1}{\sqrt{1+n^2}}, \quad\mbox{for }|s| \geq n.
\end{equation}
Combining \eqref{eq:uclass2} and \eqref{eq:uclass3}, the estimate \eqref{eq:uclass1} easily follows for
$n \geq \sqrt{1/\eta^2-1}$.
\end{proof}

\begin{remark}\label{rmk:cl_1-2}
Observe that, in case $(f_\mathrm{ap})$ holds, as we know that for $i\in \{k+1, \ldots, 2k\}$, $d_{n,i}\in [u_0, \bar u]$, the condition \eqref{eq:f-cl-n'}
can be replaced by
$$
a(0)\exp(\int_0^1 \frac{a'^+(x)}{a(x)} dx) \int_{u_0}^{\bar u} f(s)ds <1.
$$

In the same way, in case $(f_\mathrm{ap})'$ holds, let $R$ be given by Lemma \ref{lem:uprime-bdd-L1'} such that, for all $n\in \mathbb N$, $\|u_n\|_{L^\infty(0,1)}\leq R$. Then the condition
\eqref{eq:f-cl-n'} can be replaced by
$$a(0)\exp(\int_0^1 \frac{a'^+(x)}{a(x)} dx) \int_{u_0}^{R} f(s)ds <1.
$$
\end{remark}

\begin{remark}
Considering the proof of Theorem \ref{thm:classical}, it seems natural  that   the solutions of \eqref{eq:Pmain} having a large number of intersections with $u_0$ are classical while the solutions   having a low number of intersections are only BV-solutions.
\end{remark}

\begin{remark}
In the autonomous case, that is $a(x) \equiv a$, in case $(f_\mathrm{ap})$ holds, one has $\int_{u_0}^{\bar u}f(s)ds = \int_{u_0}^0f(s)ds$ and so, also in view of Remark \ref{rmk:cl_1-2}, conditions \eqref{eq:f-cl-n} and \eqref{eq:f-cl-n'} reduce to
\begin{equation}\label{cond:class}
a \int_{u_0}^0 f(s)ds < 1.
\end{equation}
The above condition has a clear dynamical interpretation. Indeed, it means that the planar system \eqref{ham-sys} admits a classical homoclinic orbit to the equilibrium point $(0,0)$ (incidentally, notice that $(\bar u,0)$ is nothing but the intersection point of the homoclinic with the positive $u$-semiaxis). Since, as already discussed in Figure \ref{fig}, all the Neumann solutions must lie inside the region bounded by this homoclinic orbit, it is immediately understood that they have to be classical solutions. 

We also notice that, for a fixed nonlinear term $f$, condition \eqref{cond:class} is always satisfied for $a$ sufficiently small and never satisfied when $a$ is sufficiently large. 
More explicit conditions can be given for particular choices of the function $f$. For instance, in the model example
$$
f(s)=-\lambda s +  s^p,\quad\mbox{with }p>1\mbox{ and }\lambda>0,
$$
it turns out that $u_{0}=\lambda^{\frac{1}{p-1}}$ and a simple computation shows that \eqref{cond:class} is satisfied if and only if
\begin{equation}\label{eq:suff-auton}
a \lambda^{\frac{p+1}{p-1}}\frac{p-1}{2(p+1)}<1.
\end{equation}
Assuming for simplicity $a = 1$, we thus see that, if $\lambda\le 1$, \eqref{eq:suff-auton} is automatically verified and all the solutions found in Theorem \ref{thm:main} are classical. On the contrary, if $\lambda>1$, \eqref{eq:suff-auton} is not automatic and it is in competition with the assumption required in Theorem \ref{thm:main} for the existence of at least one non-constant possibly discontinuous BV-solution of \eqref{eq:Pmain}, i.e., $f'(u_{0})>\lambda_2$, or equivalently 
\begin{equation}\label{eq:suff-exist}
(p-1)\lambda >\lambda_2.
\end{equation}
So, in this case, the intersection of the two assumptions \eqref{eq:suff-auton} and \eqref{eq:suff-exist} is given by 
\begin{equation}\label{eq:suff-interval}
\frac{\lambda_2}{p-1}<\lambda<\left(\frac{2(p+1)}{p-1}\right)^{\frac{p-1}{p+1}}
\end{equation}
which is certainly not empty for $p$ large. 

We finally observe that, if we set the problem \eqref{eq:Pmain} in the interval $(0,L)$, instead of $(0,1)$, and we let $L \to \infty$, the eigenvalues $\lambda_k \to 0$. Thus, condition \eqref{eq:suff-interval} is not empty also when $L$ is sufficiently large.
\end{remark}

\begin{remark}
Recall also that, by \cite[Corollary 3.5]{LGO}, if 
$$
\int_0^1 a(x)|f(u(x))|\,dx<1
$$
then the solution $u$ is classical.
\end{remark}

\section*{Acknowledgments}
The authors thank warmly D. Bonheure for helpful discussions.
This research was partially supported by the INdAM - GNAMPA Project 2019 ``Il modello di Born-Infeld per l'elettromagnetismo nonlineare: esistenza, regolarit\`a e molteplicit\`a di soluzioni" and the INdAM - GNAMPA Project 2020 ``Problemi ai limiti per l'equazione della curvatura media prescritta''. 
C. De Coster and F. Colasuonno acknowledge respectively the supports of the Department of Mathematics - University of Turin and of the LAMAV - Universit\'e Polytechnique Hauts-de-France for their visit at Turin and Valenciennes, where parts of this work have been achieved.

\bibliographystyle{abbrv}
\bibliography{biblio}

\end{document}